\documentclass[12pt,reqno]{article}
\usepackage[usenames]{color}
\usepackage{amssymb}
\usepackage{graphicx}
\usepackage{amscd}
\usepackage[colorlinks=true,
linkcolor=webgreen,
filecolor=webbrown,
citecolor=webgreen]{hyperref}

\definecolor{webgreen}{rgb}{0,.5,0}
\definecolor{webbrown}{rgb}{.6,0,0}

\usepackage{color}
\usepackage{fullpage}
\usepackage{float}

\usepackage{graphics,amsmath,amssymb}
\usepackage{amsthm}
\usepackage{amsfonts}
\usepackage{latexsym}

\begin{document}

\vspace*{2.1cm}

\theoremstyle{plain}
\newtheorem{theorem}{Theorem}
\newtheorem{corollary}[theorem]{Corollary}
\newtheorem{lemma}[theorem]{Lemma}
\newtheorem{proposition}[theorem]{Proposition}
\newtheorem{obs}[theorem]{Observation}
\newtheorem{claim}[theorem]{Claim}

\theoremstyle{definition}
\newtheorem{definition}[theorem]{Definition}
\newtheorem{example}[theorem]{Example}
\newtheorem{remark}[theorem]{Remark}
\newtheorem{conjecture}[theorem]{Conjecture}
\newtheorem{question}[theorem]{Question}

\begin{center}

\vskip 1cm

{\Large\bf Periodic efficient total girth colorings in cubic maps of girth 4 correspond to 3-permutation face colorings} %
\vskip 5mm
\large
Italo J. Dejter

University of Puerto Rico

Rio Piedras, PR 00936-8377

\href{mailto:italo.dejter@gmail.com}{\tt italo.dejter@gmail.com}
\end{center}


\begin{abstract} 
Efficient total girth colorings (or ETGCs) of maps of connected simple cubic graphs of girth 4 are reanalyzed in genus-realizing orientable surfaces, where the
 ETGC condition applies to the restriction of each color class to the vertex set.  A necessary condition for such ETGCs to exist is that the maps considered to admit them have only face-cycle lengths divisible by 4. When such ETGCs exist, for which seven constructive tools are provided (four old and three new), the faces of the involved maps are shown to be colorable by the six 3-permutations whenever the face color cycles are length 4 periodic. 
Moreover, for the vertex coloring of each ETGC, there are two mutually orthogonal subjacent edge colorings leading to two corresponding mutually orthogonal ETGCs distinguishable as the directed ETGC and the reversed ETGC.
Furthermore, there are two injections into the set of 3-permutation face colorings: one from the set of periodic directed ETGCs and the other one from the set of periodic reversed ETGCs. Throughout the work,
 questions and conjectures are posed, among which one asserting that all ETGCs are obtained solely by means of the seven mentioned tools.
 \end{abstract}

\section{Introduction}\label{s0}

\begin{definition}\label{tc} Given a simple graph $G$, a {\it total coloring} (or {\it TC}) of $G$ is a color assignment to the vertices and edges of $G$ such that no two incident or adjacent elements (vertices or edges)
are assigned the same color. The {\it total chromatic number} $\chi''(G)$ of $G$ is the least number of colors required by a TC of $G$.\end{definition}

A recent survey \cite{tc-as} contains an updated bibliography on TCs. The TC Conjecture, posed independently by Behzad \cite{B1,B2} and by Vizing \cite{V}, asserts that $\chi''(G)$ is either $\Delta(G)+1$ or $\Delta(G)+2$, where $\Delta(G)$ is the largest degree of any vertex of $G$. 
 The TC Conjecture was established for cubic graphs \cite{Feng,Mazzu,Rosen,Vi}, meaning that the total chromatic number of cubic graphs is either 4 or 5. To decide whether a cubic graph $G$ has total chromatic number $\Delta(G)+1$, even for bipartite cubic graphs, is NP-hard \cite{Arroyo}.

\subsection{Efficient total colorings}

In \cite{+1}, total colorings of $k$-regular graphs $G$ of girth $k+1$ ($1<k\in\mathbb{Z}$) in the presence of efficient dominating sets  \cite{worst,Tomai,D73,Deng, EDS,Knor} were considered. For such purpose, Definitions~\ref{antes} and \ref{ahora} are introduced. Efficient dominating sets are a particular case of perfect dominating sets \cite{Araujo,Horak,PDS,Delgado}, also considered in Section~\ref{Messi}.

\begin{definition}\label{antes} A vertex subset $\Sigma$ of $G$ is said a {\it perfect dominating set}, or {\it PDS}, of $G$ if each vertex of the complementary graph $G\setminus\Sigma$ is adjacent to just one element of $\Sigma$.
 Let $N_G[v]$ and $N_G(v)$ be the {\it closed} and {\it open neighborhoods} \cite{D73} of a vertex $v\in V(G)$, respectively.
If $\mu$ is a total coloring of $G$ and $S_i=\{v\in V(G):\mu(v)=i\}$, where $i\in[k]_0=\{0,1,\ldots,k\}$,
then $S_i$ is an {\it efficient domination set}, or {\it EDS}, of $G$, also called {\it perfect code}, if $|N_G[v]\cap S_i|=1,\forall v\in V(G),\forall i\in[k]_0$. Or equivalently: if each $S_i$ is independent and every vertex outside $S_i$ has exactly one neighbor in $S_i$.  Clearly, each $S_i$ is also a PDS.
\end{definition}

\begin{definition}\label{ahora} 
A vertex and edge coloring $\mu$ of a $k$-regular simple graph $G$ ($2\le k\in\mathbb{Z}$) is said to be an {\it efficient total coloring} ({\it ETC}), and $G$ said to be {\it ETCed}, if:
\begin{enumerate}
\item[\bf(a)] as in Definition~\ref{tc}, each $v\in V(G)$ and its neighbors are assigned by $\mu$ all the colors in $[k]_0$ via a bijection $N_G[v]=N_G(v)\cup\{v\}\leftrightarrow[k]_0$;
\item[\bf(b)] $\mu$ partitions $V(G)$ into $k+1$ EDSs.
\end{enumerate}\end{definition}

\noindent Definition~\ref{ahora} implies that the total chromatic number of $G$ is $\Delta(G)+1$. 

In the rest of this paper, simple cubic graphs $G$ of girth 4 
are dealt with. As in \cite{+1}, the purpose is to determine ETCs of such graphs. These ETCs yield {\it edge-girth colorings} (Definition~\ref{egc}, below) on the prism $G\square K_2$, where $K_2$ is the complete graph on two vertices and $G\square G'$ stands for the Cartesian product of two graphs $G$ and $G'$ \cite[pg. 30]{Bondy}. 
  
 \begin{definition}\label{egc} Let $G$ be a simple cubic graph of girth 4.
An {\it edge-girth coloring} (or {\it EGC}) of $G$ is a proper edge coloring via four colors, each girth cycle colored with four colors, each color used precisely once. 
\end{definition}

In order to present our results, we need every girth cycle $C$ of $G$ to be colored with four colors, each color used exactly once on the vertices of $C$ and exactly once on the edges of $C$. This leads  to the following Definition~\ref{hoy}, in which, in addition, the total coloring of the girth cycles is combined with the concept of ETC in Definition~\ref{ahora}. 

\begin{definition}\label{hoy} A {\it vertex-edge-girth coloring} ({\it VEGC}), of a simple cubic graph $G$ of girth 4 is a TC of $G$ in which each girth cycle is colored with four colors, each color used just once on vertices and also just once on edges. In addition, an ETC of $G$ that is also VEGC will be said to be an {\it efficient total girth coloring} ({\it ETGC}) of $G$.
\end{definition}

The constructions in the following sections involve the existence of ETGCs in simple cubic graphs $G$ of girth 4, with some  ETGCs that were not visualized in \cite{+1}. The constructions also involve the existence of cubic graphs $G$ of corresponding EGCs on the prisms $G\square K_2$, motivating the following definition.

\begin{definition}\label{ortho}
Given a simple graph $G$ and a TC $C$ of $G$, if $C$ is extensible to two ETCs $C'$ and $C''$ with differing colors on every edge of $G$, then $C'$ and $C''$ are said to be {\it mutually orthogonal} ETCs and their induced edge colorings are said to be {\it mutually orthogonal}, too.
\end{definition}     

 \subsection{Maps, belts, zonogons and cutouts}
 
\begin{definition}\label{belt}
The {\it genus} of a graph $G$ is the number of handles that must be added to the sphere in order to avoid edge crossings in any drawing of $G$ in the resulting orientable surface.  
Let $G$ be a connected simple cubic graph of girth 4 and genus $g=g(G)$.
 A {\it map} $M(G)$ of $G$ is an embedding of $G$ into a connected closed orientable surface $S$ of genus $g$ such that the connected components of $S\setminus G$
 are homeomorphic to open disks $D$. The closure of each such $D$ is said to be a {\it face} of $M(G)$, and $G$ is said to be the {\it 1-skeleton} of $M(G)$.   
A {\it belt} of $M(G)$ is a face boundary cycle.  An $\ell$-{\it belt} of $M(G)$ is a belt of length $\ell$, where $3<\ell\in\mathbb{Z}$. 
The {\it Euler characteristic} $\chi=\chi(M(G))$ of $M(G)$ (in $S$) is given by $\chi(G)=|V(G)|-|E(G)|+|F(M(G))|=2-2g(G)$, where $|\cdot|$ stands for the cardinality function and $F(M(G))$ is the number of faces of $M(G)$ \cite[pg. 278]{Bondy}. In topological graph theory \cite{GrossT}, $M(G)$ is represented by a {\it rotation system}, that is an assignment of a cyclic permutation $p_v=(e_1\;e_2\;e_3)$ of the edges $e_i$ ($i=1,2,3$) incident to each $v\in V(G)$ yielding the clockwise (or counterclockwise) order of those edges around $v$ in $S$.
\end{definition}
  
\begin{definition}\label{cutout} Let $1<n\in\mathbb{Z}$.
A {\it $2n$-zonogon}  \cite[pg. 319]{BV} in the Euclidean plane $\mathbb{R}^2$ is a centrally symmetric convex polygon $X$ of $2n$ sides grouped into parallel pairs of oriented sides of equal lengths and opposite orientations, so we write $X=(L_1,L_2,\ldots,L_n,L_1^{-1},L_2^{-1},\ldots,L_n^{-1})$, where each clockwise oriented side $L_i$ and corresponding counterclockwise oriented side $L_i^{-1}$ have equal lengths, for $i=1,\ldots,n$. A surface $S$ of genus $g=\lfloor\frac{n}{2}\rfloor$ is obtained by identifying or ``gluing" or ``cancelling" each side $L_i$ with its corresponding opposite side $L_i^{-1}$. A map $M(G)$ of genus $g=\lfloor\frac{n}{2}\rfloor$ is obtained as the resulting quotient of $X$, where each identification $L_i\equiv L_i^{-1}$ is to be referred as a {\it sides cancellation segment}. Thus, we represent each of our maps $M(G)$ of genus $g=\lfloor\frac{n}{2}\rfloor$ as an {\it $n$-cutout} $X$, namely a $2n$-zonogon from which $M(G)$ 
is recoverable by the said side-pairing identifications.
 In particular, a 2-cutout is also said to be a {\it bicutout}, in which case we assume $L_1,L_1^{-1}$ to be horizontal and $L_2,L_2^{-1}$ to be vertical, though in Section~\ref{Messi} these directions may be changed.
 \end{definition}

\begin{definition}\label{cut}
If $G$ is planar, then
a {\it cutout} of $G$ is defined as a closed rectangle $X=[0,x_0]\times[0,y_0]\subset\mathbb{R}^2$ with $(x_0,y_0)\in V(G)\subset X\cap\mathbb{Z}^2$ and $E(G)$ given by arcs in $X$ whose ends are in $\mathbb{Z}^2$ and that do not cross each other in $\mathbb{R}^2\setminus\mathbb{Z}^2$ 
such that the vertical (resp. horizontal) segment identification $\{0\}\times[0,y_0]\equiv\{x_0\}\times[0,y_0]$ (resp. $[0,x_0]\times\{0\}\equiv[0,x_0]\times\{y_0\}$) in $\mathbb{R}^2$ yields a representation of $G$ with point identifications $(0,y)\equiv(x_0,y)$, for $0\le y\le y_0$ (resp. $(x,0)\equiv(x,y_0)$, for $0\le x\le x_0$), said to be {\it in parallel}. Such cutout is said to be an {\it $(x_0\times y_0)$ xcutout} (resp. {\it $(x_0\times y_0)$ ycutout}).
\end{definition}

\begin{definition}\label{25}
Let $G$ be a connected simple cubic graph of girth 4 and let $v\in V(G)$. Then, $v$ either belongs to a 4-cycle of $G$, in which case $v$ is said to be an {\it O-vertex}, or not, in which case $v$ is said to be a {\it Y-vertex}.
Let $e\in E(G)$. If the end-vertices of $e$ are Y-vertices, then $e$ is said to be a {\it Y-edge}. 
If $G$ has Y-vertices only as end-vertices of Y-edges, or has no Y-vertices at all, then $G$ is said to be a {\it Y-graph}. 
The edges of $G$ not in 4-cycles, said to be {\it H-edges}, form a subset $E_H(G)$ of $E(G)$. The components of $G\setminus E_H(G)$ are said to be {\it clusters}.
Let $M(G)$ be a map of $G$.
A $2\ell$-belt $B$ of $M(G)$ with associated cycle $(e_1,e_2,\ldots,e_{2\ell-1},e_{2\ell})$ of subsequently adjacent edges is said to be a {\it Y-belt} if it has Y-edges, if any, only in pairs $(e_h,e_{h+\ell})$ of opposite edges in $B$, where $1\le h\le\ell$. 
If $M(G)$ has only belts of even length and all its belts are Y-belts, then $M(G)$ is said to be a {\it Y-map}. 
\end{definition}

\begin{remark}\label{ndrangheta}
The maps $M(G)$ here are required to be Y-maps as in Definition~\ref{25} in order for us to avoid maps 
with 4-belts too locally sparse, like the toroidal map on the leftmost bicutout in  (\ref{oct}) (see Example~\ref{contrera}) in which there do not exist ETGCs. Such restriction from any maps to just Y-maps insures that faces split by borders of cutouts are recoverable by identification of opposite sides. This requirement is not needed in \cite{+1} for the tool operations there are Y-map-preserving and start from the 3-cube, which has a Y-map itself (see Figure~\ref{schem} or the right of Figure (\ref{oct})). 
\end{remark}

\begin{example}\label{contrera}
An example of a $(4\times 4)$ bicutout $X$ of a toroidal map $M(G)$ having an 8-belt $B$ with three Y-edges and no corresponding opposite Y-edges is given on the left side of  (\ref{oct}), thus not satisfying the requirement
of Definition~\ref{25}. In the figure, O-vertices and Y-vertices are indicated $\circ$ and $\bullet$, respectively, while edges are given by horizontal and vertical short segments and missing edges in the border of the $X$ are given as horizontal or vertical pairs of dots, i.e., diaereses and colons.
This $M(G)$ has twelve O-vertices, four Y-vertices, four 4-belts and four 8-belts, but it does not admit ETGCs. 
By stacking copies of $X$ horizontally and vertically 
(see Section~\ref{extensions}), 
$4\ell\times 4k$-bicotouts of toroidal maps with similar properties are obtained. 
\end{example}

\subsection{Belts of lengths divisible by 4}

\begin{theorem}\label{fo}\cite[Theorem 9]{+1}
Let $G$ be a connected simple cubic graph of girth 4 and let $M(G)$ be a Y-map with an ETGC $\mu$ of 4 colors. Then,  $|V(G)|\equiv 0 \mod 4$.  Moreover,
$M(G)$ has only $\ell$-belts with $\ell\equiv 0 \mod 4$.
\end{theorem}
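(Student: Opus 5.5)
The first claim will come from counting colour classes. Write $S_i=\mu^{-1}(i)\cap V(G)$ for $i\in[3]_0$. By Definition~\ref{ahora}(b), each $S_i$ is an efficient dominating set of the cubic graph $G$, so the closed neighbourhoods $N_G[v]$, $v\in S_i$, partition $V(G)$. Each of these has four vertices, hence $|V(G)|=4|S_i|$. In particular $|V(G)|\equiv 0\bmod 4$, and all four colour classes have the same size.

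For the second claim, fix an $\ell$-belt $B=(v_1,\dots,v_\ell)$ of the Y-map $M(G)$; $\ell$ is even by Definition~\ref{25}. I plan to show that the vertex colours along $B$ are periodic with period $4$, taking the form $a,b,c,d,a,b,c,d,\dots$ with $\{a,b,c,d\}=[3]_0$. Then $\ell\equiv 0\bmod 4$ follows, because the colouring closes up consistently around the cycle.

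\textbf{Step 1: no repeated colour at distance two.} Consider three consecutive belt vertices $v_{j-1},v_j,v_{j+1}$. Both $v_{j-1}$ and $v_{j+1}$ lie in $N_G(v_j)$, and condition~(a) makes the colours on $N_G[v_j]$ pairwise distinct. So $\mu(v_{j-1})\neq\mu(v_{j+1})$, and any four consecutive vertices on $B$ already use at least three colours.

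\textbf{Step 2: four consecutive vertices use all four colours.} It remains to exclude $\mu(v_{j-1})=\mu(v_{j+2})$. Such a pair would be two vertices of the same class $S_i$ at distance at most $3$, which an efficient dominating set permits. I therefore bring in the local structure.

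If the path $v_{j-1}v_jv_{j+1}v_{j+2}$ lies in a 4-cycle, that 4-cycle is a girth cycle. The VEGC condition of Definition~\ref{hoy} then forces its four vertices to carry distinct colours, and we are done.

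Otherwise the path contains an H-edge. Here I will use the Y-map hypothesis together with the third neighbours $w_j,w_{j+1}$ of $v_j,v_{j+1}$ off the belt. By condition~(a) at $v_j$ and $v_{j+1}$, the colours of $w_j$ and $w_{j+1}$ are determined as the missing colours in the respective closed neighbourhoods. The EDS property of the class $\mu(v_{j-1})$ then gives a contradiction: that class would dominate $v_j$ through $v_{j-1}$ and $v_{j+1}$ through $v_{j+2}$, while the edge colours along the cluster 4-cycles containing these vertices (again constrained by Definition~\ref{hoy}) propagate a conflict.

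\textbf{Main obstacle.} I expect the H-edge case of Step~2 to be the crux. Y-edges appear on belts only in opposite pairs, and this is the ingredient that excludes configurations like the leftmost example in~(\ref{oct}), so it must enter exactly here. My first attempt will be a case split on whether $v_j,v_{j+1}$ are O- or Y-vertices. In the O-vertex cases I will use the girth 4-cycles through them to pin down edge colours. In the Y-edge case I will transport the colour pattern from the opposite Y-edge of $B$.

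If the local argument stalls, my fallback is a global count. Each colour occupies exactly $\ell/4$ positions on every belt, and summing over belts is constrained by the equal class sizes $|S_i|=|V(G)|/4$ together with Euler's formula. This route looks weaker, though, so I would try the local argument first.
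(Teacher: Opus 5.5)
Your argument for $|V(G)|\equiv 0\pmod 4$ is the same as the paper's and is fine. The second part, however, rests on a false lemma. Step~2 claims that any four consecutive belt vertices carry four distinct colours, i.e.\ that belt colours are $4$-periodic. This holds only for \emph{periodic} ETGCs, while the theorem covers all ETGCs.

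Here is why it fails. Suppose an O-arc $a\,_\rightarrow^b\,c\,_\rightarrow^a\,d\,_\rightarrow^c\,b\,_\rightarrow^x$ as in (\ref{reverse}) has $x\neq d$. Then $x=a$ is forced. The next vertex must avoid the colours $b$ and $d$, since it lies in $N_G[b]$ with them, and also the colour $a$ of the edge reaching it. So it gets colour $c$, and the window $c,d,b,c$ repeats a colour.

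The paper's aperiodic examples (Example~\ref{contra}) show this under the exact hypotheses of the theorem. The first 8-belt of (\ref{nelly3}), in the toroidal map $\Upsilon$ of Figure~\ref{20-gray}, has vertex colours $0,3,2,1,0,2,3,1$. It contains the windows $2,1,0,2$ and $3,1,0,3$. This map satisfies everything you assume:
\begin{itemize}
\item $G_1$ consists of three copies of $P_4\square K_2$ attached to a 12-cycle, each joining two opposite edges of it. So $G_1$ has a $K_{3,3}$ minor and the torus realizes its genus.
\item Every vertex is an O-vertex, so there are no Y-edges and all belts (of lengths $4,8,20$) are Y-belts.
\item The colouring is a genuine ETGC.
\end{itemize}
Each bad window runs along a ladder rail, crosses an H-edge, and turns around the end rung of the next ladder. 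That is exactly your ``H-edge case'', where you expected a contradiction; none exists, and the Y-map hypothesis is vacuous there.

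Two smaller problems. Your case split is incomplete: a 3-edge path can have all its edges in 4-cycles without lying in one, as along the 8-belts of $C_8\square K_2$. And your fallback count presupposes $\ell/4$ occurrences of each colour per belt, which is the conclusion you want.

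The missing idea is to aim for the weaker statement the paper uses. The colour cycle of each belt should \emph{split} into consecutive disjoint blocks of four vertices, each carrying all four colours. The paper takes these blocks to be the O-arcs (\ref{reverse}) forced around edges the belt shares with 4-belts, via the rigid pattern (\ref{vac}). The bi-arcs (\ref{biarc}) handle the opposite Y-edge pairs of a Y-belt. Consecutive arcs may meet with $x\neq d$ and may be read in either direction.

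Your Step~1 and the VEGC observation are the right local tools for this. Together with the face-corner structure at a degree-3 vertex, they show that a window centred at an edge shared with a 4-belt is always rainbow. A window centred at an H-edge need not be. In the belt above, read as $1,0,3,2,1,0,2,3$, the splitting $1,0,3,2\mid 1,0,2,3$ uses blocks centred at 4-belt edges, and both bad windows are centred at H-edges. Once such a splitting exists, each colour occurs $|V(B)|/4$ times on $B$, giving $\ell\equiv 0\pmod 4$.
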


\begin{proof}\label{lim} 
The closed neighborhoods around the vertices of any fixed EDS participating of $\mu$ as a color class partition $V(G)$, so each color class has size $V(G)/4$. This gives the necessary condition $|V(G)|\equiv 0 \mod 4$. The neighborhood of any fixed 4-belt in $M(G)$ looks like as in either  in (\ref{vac}):
\begin{eqnarray}\label{vac}\begin{array}{llllll}
d\hspace*{2.2mm}_-^c\hspace*{2.2mm}b\hspace*{2.2mm}_-^a\hspace*{2.2mm}c\hspace*{2.2mm}_-^d\hspace*{2.2mm}a
&&&&d\hspace*{2.2mm}_-^a\hspace*{2.2mm}b\hspace*{2.2mm}_-^d\hspace*{2.2mm}c\hspace*{2.2mm}_-^b\hspace*{2.2mm}a\\
\hspace*{8.5mm}\!_d|\hspace*{6mm}_b|&&\mbox{or}&&\hspace*{8.5mm}\!_c|\hspace*{6mm}_a|\\
c\hspace*{2.2mm}_-^b\hspace*{2.2mm}a\hspace*{2.2mm}_-^c\hspace*{2.2mm}d\hspace*{2.2mm}_-^a\hspace*{2.2mm}b
&&&&c\hspace*{2.2mm}_-^d\hspace*{2.2mm}a\hspace*{2.2mm}_-^b\hspace*{2.2mm}d\hspace*{2.2mm}_-^c\hspace*{2.2mm}b\\
\end{array}\end{eqnarray} 
where $a,b,c,d\in[3]_0=\{0,1,2,3\}$ are pairwise different vertex colors shown in normal size and edge colors shown in subindex size.
It follows that any color continuation along an $\ell$-belt $B$ sharing a total color edge with a 4-belt, say the fourth vertical edge $c\,_\rightarrow^a\,d$ in (\ref{vac}) seen as an arc from $c$ to $d$, must have a subpath pattern of length 4 in $B$ of the form:
\begin{eqnarray}\label{reverse}
\;a\,_\rightarrow^b\,c\,_\rightarrow^a\,d\,_\rightarrow^c\,b\,_\rightarrow^x\hspace{3mm}\mbox{ (or in reverse: }\hspace{3mm}
\;_\leftarrow^x\,a\,_\leftarrow^b\,c\,_\leftarrow^a\,d\,_\leftarrow^c\,b\;\;),\end{eqnarray}
where $x\notin\{b,c\}$ in $[3]_0$. A subpath pattern as in (\ref{reverse}) will be said to be an {\it O-arc}.
We also consider the pattern in (\ref{reverse}) with at least one of the two central vertices being a Y-vertex, in which case it is said be a {\it Y-arc}. This Y-arc is joined together with its opposite Y-arc (\ref{biarc}):
\begin{eqnarray}\label{biarc}
\;a\,_\rightarrow^b\,d\,_\rightarrow^a\,c\,_\rightarrow^d\,b\,_\rightarrow^x\hspace{3mm}\mbox{ (or in reverse: }\hspace{3mm}
\;_\leftarrow^x\,a\,_\leftarrow^b\,d\,_\leftarrow^a\,c\,_\leftarrow^d\,b\;\;),\end{eqnarray}
forming an ``H'' shaped subgraph said to be a {\it bi-arc}, composed by 6 vertices and 6 arcs, the two central arcs composing a central edge.
If $G$ is a Y-graph, then each belt $B$ of $M(G)$ has the image $\mu(B)$ under $\mu$ splittable into O-arcs, thus having $|V(B)|/4$ vertices, resp. $|E(B)|/4$ edges, of each of the 4 colors, so
$M(G)$ has only $\ell$-belts with $\ell\equiv 0 \mod 4$. Otherwise, $G$ contains 
bi-arcs. In this case, a similar argument is valid for each of the belts of $M(G)$.
\end{proof}

\begin{example} The number colors of $[3]_0$ for vertex indicators and edges in all figures of this work are identified with visible colors as in  (\ref{colors}).
\begin{eqnarray}\label{colors}\mbox{0=hazel, 1=red, 2=blue, 3=green, (auxiliary: light-gray, light blue, yellow).}\end{eqnarray}
The Y-map $M(G)$ represented via the leftmost ycutout of Figure~\ref{bluyelow} has two 8-belts separated by two vertical red edges. These edges can be taken as the central edges of corresponding bi-arcs 
 $$(\;1\,_\rightarrow^3\,0\,_\rightarrow^1\,2\,_\rightarrow^0\,3\,_\rightarrow^2)\cup(\;1\,_\rightarrow^3\,2\,_\rightarrow^1\,0\,_\rightarrow^2\,3\,_\rightarrow^0),$$ located around the ``short'' central edge, and
$$(\;1\,_\rightarrow^3\,2\,_\rightarrow^1\,0\,_\rightarrow^2\,3\,_\rightarrow^0)\cup(\;1\,_\rightarrow^3\,0\,_\rightarrow^1\,2\,_\rightarrow^0\,3\,_\rightarrow^2),$$ located around  the ``long'' edge represented twice on left and right before ycutout identification. The remaining belts of $M(G)$ are eight 4-belts. These together with the two 8-belts receive colors that satisfy that
$M(G)$ has only $\ell$-belts with $\ell\equiv 0 \mod 4$. That is also the case of the second ycutout in the figure, equivalent to the first one. The third ycutout has six 8-cycles and 12 4-cycles and admits a similar treatment. The fourth ycutout has four 8-cycles, ten 4-cycles and two vertices whose 3 neighbors are O-vertices, namely the central blue vertex (color 2) and lower hazel vertex (color 0) twice represented on left and right before identification. In a similar fashion as for the three cutouts to its left, the conclusion of Theorem~\ref{fo} follows.
\end{example}

\begin{example}
The 2-toroidal Y-map obtained from the 8-cutout in Figure~\ref{paquita} by identifying its opposite sides has 14 belts denoted by the capital letters from A to N. It has six 4-belts A,B,C,D,E,F comprising 24 O-vertices, the 8-belt N formed by 8 Y-vertices and 8 Y-edges, the 16-belt M that shares one edge with each 4-belt and two edges with N so that the end-vertices of the resulting 8 edges are the 16 vertices of M. The following O-arcs are obtained from the figure:
$$\begin{array}{ccc}
A(G:N\;[2\,_\rightarrow^3\,1\,_\rightarrow^2\,0\,_\rightarrow^1\,3\,_\rightarrow^0]\;N)/(J,K)&&E(I:C\;[3\,_\rightarrow^1\,0\,_\rightarrow^3\,2\,_\rightarrow^0\,1\,_\rightarrow^2]\;D)/(H,M)\\
A(M:N\;[1\,_\rightarrow^0\,2\,_\rightarrow^1\,3\,_\rightarrow^2\,0\,_\rightarrow^3]\;C)/(J,K)&&E(L:N\;[0\,_\rightarrow^2\,3\,_\rightarrow^0\,1\,_\rightarrow^3\,2\,_\rightarrow^1]\;D)/(H,M)\\
B(G:N\;[2\,_\rightarrow^3\,1\,_\rightarrow^2\,0\,_\rightarrow^1\,3\,_\rightarrow^0]\;N)/(J,L)&&F(I:D\;[3\,_\rightarrow^1\,0\,_\rightarrow^3\,2\,_\rightarrow^0\,1\,_\rightarrow^2]\;C)/(H,M)\\
B(M:N\;[1\,_\rightarrow^0\,2\,_\rightarrow^1\,3\,_\rightarrow^2\,0\,_\rightarrow^3]\;D)/(J,L)&&F(K:N\;[0\,_\rightarrow^2\,3\,_\rightarrow^0\,1\,_\rightarrow^3\,2\,_\rightarrow^1]\;C)/(H,M)\\
\end{array}$$
where a typical O-arc is encoded $X(Y:Z\;[a\,_\rightarrow^b\,c\,_\rightarrow^a\,d\,_\rightarrow^c\,b\,_\rightarrow^x]W)\;/(U,V)$, sharing the arc $c\,_\rightarrow^a\,d$ with the 4-belt $X$.
\end{example}

\begin{definition}\label{periodic}
An ETGC $\mu$ of a Y-map $M(G)$ as in Theorem~\ref{fo} satisfying (\ref{reverse}) with $x=d$ all along its belts is said to be {\it periodic}. Otherwise, $\mu$ is said to be {\it aperiodic}.
\end{definition}

Most examples of ETGCs below are periodic 
with every vertex $v$ being an $O$-vertex or an end-vertex of an $H$-edge, as in the cutout on the right of Figure~\ref{ahiva}. In such examples, PFCs by means of the six 3-permutations as introduced in Subsection~\ref{face} are available.

\begin{example}\label{contra} 
There are Y-maps $M(G)$ with aperiodic ETGCs, namely satisfying (\ref{reverse}) all along its belts but not always with $x=d$, as in the 8-belt in  (\ref{nelly1})
\begin{eqnarray}\label{nelly1}((1\,_\rightarrow^0\,2\,_\rightarrow^1\,3\,_\rightarrow^2\,0\,_\rightarrow^1)(\;2\,_\rightarrow^3\,1\,_\rightarrow^2\,0\,_\rightarrow^1\,3\,_\rightarrow^2))\end{eqnarray} of the bicutout $Y$ represented on the right of (\ref{te}) and in the 16-belt in  (\ref{nelly2})
\begin{eqnarray}\label{nelly2}
((3\,_\rightarrow^1\,2\,_\rightarrow^3\,0\,_\rightarrow^2\,1\,_\rightarrow^0)(3\,_\rightarrow^1\,2\,_\rightarrow^3\,0\,_\rightarrow^2\,1\,_\rightarrow^3)(2\,_\rightarrow^0\,3\,_\rightarrow^2\,1\,_\rightarrow^3\,0\,_\rightarrow^1)^2(_\leftarrow^0\,1\,_\leftarrow^2\,0\,_\leftarrow^1\,3\,_\leftarrow^0\,2)^{-1})
\end{eqnarray}
of the bicutout $\Upsilon$ on the right of Figure~\ref{20-gray}, starting at its leftmost vertex and in the clockwise direction (namely to the left).
Note that (\ref{nelly1}) and (\ref{nelly2}) are decomposed into parenthesized O-arcs, where exponent 2 means that such O-arc is concatenated with itself, and exponent $-1$ means that the inverse of the enclosed O-arc is meant to be in the cycle.
The remaining pair of 8-belts are decomposable as in  (\ref{nelly3}).
\begin{eqnarray}\label{nelly3}((0\,_\rightarrow^1\,3\,_\rightarrow^0\,2\,_\rightarrow^3\,1\,_\rightarrow^2)(_\leftarrow^2\,1\,_\leftarrow^0\,3\,_\leftarrow^1\,2\,_\leftarrow^3\,0)^{-1})
\;\;\mbox{ and }\;\;((2\,_\rightarrow^3\,1\,_\rightarrow^2\,0\,_\rightarrow^1\,3\,_\rightarrow^0)(_\leftarrow^0\,3\,_\leftarrow^2\,1\,_\leftarrow^3\,0\,_\leftarrow^1\,2)^{-1})
\end{eqnarray}
On the other hand, in Example~\ref{tor} and Figure~\ref{bluyelow}, cycles of lengths $\ell\equiv 2 \mod 4$ exist in Y-maps $M(G)$ but they do not occur as $\ell$-belts, for they have at least two adjacent edges in common with some 4-belt, or larger $\ell$-belt, ($4<\ell\equiv 0\mod 4$).
\end{example}

\section{Sprays: Color propagation rule}\label{planar}

\begin{definition}\label{sabado}
Let $G$ be a cubic graph of girth 4 and let $X$ be a cutout of a Y-map $M(G)$ with $G$ as its 1-skeleton. Given a vertex $v$ of $G$ incident to edges $e=(v,v_e),f=(v,v_f)$ and $g=(v,v_g)$, {\it spray}  by color set $\{c_0,c_1,c_2,c_3\}=\{0,1,2,3\}=[3]_0$ at the vertex $v$ is a color propagation rule given by:
\begin{enumerate}\item if $v,e,f$ are attributed colors $c_0,c_1,c_2$, respectively, then $g$ is assigned color $c_3$;
\item if in addition $v_e$ and $v_f$ are attributed colors $d_1$ and $d_2$, respectively, where $[3]_0=\{c_0,d_1,d_2,d_3\}$, then $v_g$ is assigned color $d_3$.
\end{enumerate}
\end{definition}

\begin{theorem}\label{tt} Let $G$, $M(G)$, $X$, $v$, $e$, $f$ and $g$ be as in Definition~\ref{sabado}. 
Initializing the spray tool by coloring $\{v,e,f,g\}$ in one-to-one correspondence with $[3]_0$ and continuing via items 1-2 over the rest of $M(G)$, an ETGC may be obtained.
 In such a case, if (\ref{reverse}) occurs always with $x=d$, propagation of the spray tool takes to an ETGC in $M(G)$ that is periodic, as in Definition~\ref{periodic}.
\end{theorem}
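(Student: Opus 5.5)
The statement is essentially a claim about how the spray rule of Definition~\ref{sabado} propagates, so I plan to argue by following the propagation and using the local structure of 4-belts and O-arcs established in the proof of Theorem~\ref{fo}.

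\emph{Initialization and one-step consistency.} First I observe that coloring $\{v,e,f,g\}$ bijectively with $[3]_0$ fixes the colors of the closed star at $v$. Item~1 then says that at any vertex $w$ whose own color and two incident edge colors are known, the third edge color is forced, so condition (a) of Definition~\ref{ahora} holds at the edge level. Item~2 plays the same role for the vertex colors of $N_G[w]$, so that $N_G[w]\leftrightarrow[3]_0$ is a bijection. Next I will propagate along the belts of $M(G)$, starting with a 4-belt containing $v$ if $v$ is an O-vertex, or with a bi-arc through $v$ otherwise. On a 4-belt the two rules force exactly one of the two patterns in (\ref{vac}), which is what makes each girth cycle use every color once on its vertices and once on its edges (Definition~\ref{hoy}). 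Every belt meeting an already colored 4-belt along an edge then acquires an O-arc (\ref{reverse}), or a bi-arc (\ref{biarc}) in the Y-case, and spraying at the end vertex of that arc determines the next arc. Because $M(G)$ is a Y-map, Remark~\ref{ndrangheta} ensures that faces cut by cutout borders are recovered consistently, and connectivity of $G$ means the propagation reaches every vertex and edge.

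\emph{The EDS condition.} If propagation closes up without conflict, I still need condition (b). Each color class is independent by (a). A vertex outside $S_i$ has exactly one neighbor in $S_i$, because $N_G[w]$ carries all four colors exactly once. Hence every $S_i$ is an EDS, and the resulting coloring is an ETGC. The word ``may'' in the statement is honest: the claim is only that when no conflict arises, the output is an ETGC. Example~\ref{contrera} shows that conflicts genuinely occur.

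\emph{Periodicity.} For the second assertion, I assume every O-arc produced satisfies $x=d$. Then along each belt the arc following $a\,_\rightarrow^b\,c\,_\rightarrow^a\,d\,_\rightarrow^c\,b\,_\rightarrow^d$ begins again with the same pattern, which is exactly Definition~\ref{periodic}. By Theorem~\ref{fo}, every belt length is divisible by 4, so the periodic pattern closes up consistently around each belt.

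\emph{Main obstacle.} The hardest step is showing that propagation around a belt that closes up, or across a sides cancellation segment of the cutout, returns the same colors. I would handle this by induction on the number of belts colored: a newly reached belt shares a full O-arc with a previously colored one, and Theorem~\ref{fo} forces its coloring to decompose into O-arcs. Where this fails, I would simply record that no ETGC arises from that initialization, which is consistent with the ``may'' in the statement.
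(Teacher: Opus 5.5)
Your proposal follows the paper's argument: propagate spray belt by belt, assume each belt's color cycle splits into O-arcs and Y-arcs as in (\ref{reverse}) and (\ref{biarc}), conclude an ETGC, and read off periodicity from $x=d$ via the schematic cycle (\ref{sche}). The paper also just assumes the O-arc/Y-arc splitting, which matches your conditional ``if no conflict arises.'' The remaining differences are small: the paper fixes an explicit belt ordering $\mathcal{B}$ (Y-belts first, clusters contiguous, edge-sharing belts before edge-joined ones) where you only appeal to connectivity, and you spell out that the closed-neighborhood bijection already gives the EDS condition, which the paper leaves implicit.
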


\begin{proof} 
 Let $\mathcal{B}=(B_1,B_2,\ldots,B_k)$ be a sequence composed by the Y-belts and 4-belts of $M(G)$. Let us have: {\bf(a)} the Y-belts, while remaining in the formation of $\mathcal{B}$ from left to right, preceding any remaining 4-belts; {\bf(b)} the 4-belts in each cluster (as in Definition~\ref{25}) forming a subsequence of contiguous terms in $\mathcal{B}$; {\bf(c)}
each belt $B_i\in\mathcal{B}$ with $i>1$ related to a previous belt $B_j\in\mathcal{B}$ ($j<i$) by: {\bf(i)} either sharing an edge $(v,u)\in E(B_i)$ with $B_j$, {\bf(ii)} or reaching $B_j$ by means of an edge with end-vertices $v\in V(B_i)$ and $u\in V(B_j)$; {\bf(d)} any remaining cases of {\bf(i)} preceding in $\mathcal{B}$ any remaining cases of {\bf(ii)}. We adopt subsequent spraying always in the clockwise direction and start spray from $B_1$ on. After having sprayed $B_1,B_2,\ldots,B_{i-1}$, we proceed inductively by spraying the vertices of $B_i$ starting from either the vertex $v$ or $u$ (as in subitems {\bf(i)} and {\bf(ii)} of item {\bf(c)}, above), one of which was not sprayed yet. 
The formation of O-arcs and Y-arcs is ensued. In the process, assume they can be enlisted parenthesized in terms as in (\ref{reverse}) and (\ref{biarc}), while taking $x=d$ whenever possible, and $x\ne d$ otherwise.
Then, spray following the order of Y-belts and 4-belts established by $\mathcal{B}$
provides each belt of $M(G)$ with a schematic color cycle formed as a concatenation of O-arcs and Y-arcs, in which case $M(G)$ receives an ETGC $\mu$. If all belts color cycles admit $x=d$ in (\ref{reverse}), then
 each belt in $X$ receives colors periodically, as in the schematic color cycle 
\begin{eqnarray}\label{sche}(a\,_-^b\,c\,_-^a\,d\,_-^c\,b\,_-^d\cdots\cdots a\,_-^b\,c\,_-^a\,d\,_-^c\,b\,_-^d\cdots\cdots a\,_-^b\,c\,_-^a\,d\,_-^c\,b\,_-^d),\end{eqnarray}
in which case $\mu$ becomes periodic.  
\end{proof}

\begin{question}\label{preg}
Considering that Y-maps $M(G)$ as in Definition~\ref{25} have each of their belts being a Y-belt, that is: with Y-edges (i.e., those not having their end-vertices in 4-belts) as opposite edges, Are there any other constraints on Y-maps $M(G)$ that impede spray propagation to reach the formation of an ETGC for $M(G)$?
\end{question}

\begin{corollary}\label{under} Under the conclusions of Theorem~\ref{tt}, if an ETGC is obtained and $x=d$ in all occurrences of (\ref{reverse}) and (\ref{biarc}), then there are two mutually orthogonal ETGCs on $M(G)$ over a common vertex coloring of $V(G)$ with no common color of such ETGCs on each fixed edge of $G$. These two ETGCs guarantee the existence of an EGC on the prism $G\square K_2$.
In particular,
the 3-cube graph $Q_3$ and the prisms $C_{4j}\square K_2$ ($1<j\in\mathbb{Z}$) have Y-maps with ETGCs via color set $[3]_0$ in two mutually orthogonal ways. Moreover, any two resulting mutually orthogonal ETGCs guarantee an EGC in the corresponding 4-cube graph $Q_4=Q_3\square K_2$ and in each prism $(C_{4j}\square K_2)\square K_2$.
\end{corollary}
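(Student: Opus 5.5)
The plan has three parts. First I build, from a periodic ETGC $\mu$, a second edge coloring on the same vertex coloring. Then I check that it is again an ETGC and differs from $\mu$ on every edge. Finally I stack the two copies of $G$ to get the EGC on the prism and apply this to $Q_3$ and $C_{4j}\square K_2$.

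\emph{Constructing the second coloring.} Let $\mu$ be the ETGC given by Theorem~\ref{tt}, periodic with $x=d$ in every occurrence of (\ref{reverse}) and (\ref{biarc}). By the proof of Theorem~\ref{tt}, every belt then carries the schematic cycle (\ref{sche}). In that cycle each edge $e=uw$ gets the color $\mu(e)$ equal to the vertex color two steps ahead in the traversal direction, i.e.\ $\mu(e)=\mu(w^+)$, where $w^+$ is the successor of $w$ along the belt. This is the ``directed'' reading. I define $\mu'$ on $V(G)$ by $\mu'=\mu$, and on edges by the reversed reading along the same belt, $\mu'(e)=\mu(u^-)$, where $u^-$ is the predecessor of $u$. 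Reading the periodic block $a\,b\,c\,d$ backwards gives the block $a\,_-^d\,c\,_-^b\,d\,_-^a\,b\,_-^c$, which is again of the form (\ref{sche}) with the roles of the colors permuted.

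\emph{Key checks, in order.} (1) Well-definedness: each edge lies on exactly two belts of $M(G)$, traversed in opposite directions. I must show that the reversed reading from one belt agrees with that from the other. I would do this locally at a vertex $v$, using the two patterns of (\ref{vac}) for 4-belts and the bi-arc picture (\ref{biarc}) for Y-edges. Around $v$ the four colors on $N_G[v]$ are distinct, so the ``other'' color left on an edge is forced. Concretely, $\mu'(vw)$ should equal the unique color outside $\{\mu(v),\mu(w),\mu(vw)\}$. I would take this as the definition of $\mu'$ and verify that it matches both belt readings. (2) Orthogonality: this definition gives $\mu'(e)\neq\mu(e)$ on every edge. (3) Properness and ETC condition (a): the edge colors at $v$ under $\mu'$ are obtained from those under $\mu$ by a fixed-point-free involution on the three non-$\mu(v)$ colors. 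That involution is determined by the periodic pattern $x=d$, so they are still distinct. Condition (b) is unchanged because the vertex classes coincide. (4) VEGC: each 4-belt in (\ref{vac}) receives four distinct $\mu'$ edge colors, as one checks directly on both patterns.

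\emph{Prism and examples.} Color $G\times\{0\}$ by the edge coloring of $\mu$ and $G\times\{1\}$ by that of $\mu'$. Each rung $(v,0)(v,1)$ gets $\mu(v)$. At every vertex the four colors are distinct, since the edges of $\mu$ at $v$ avoid $\mu(v)$. The 4-cycles of $G\square K_2$ are of three kinds:
\begin{itemize}
\item 4-cycles of $G$ in either copy, which are fine by the VEGC property;
\item rung squares $(v,0)(w,0)(w,1)(v,1)$, colored $\mu(vw),\mu(w),\mu'(vw),\mu(v)$, which are four distinct colors exactly because of the definition of $\mu'$ in step (1);
\item no others, since $G$ has girth 4.
\end{itemize}
For the particular cases, I would exhibit the spray directly. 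On $Q_3$, use the six 4-belts with the pattern (\ref{vac}). On $C_{4j}\square K_2$, use the two $4j$-belts, colored periodically by (\ref{sche}) with $j$ blocks, together with the $4j$ 4-belts. These give $Q_4$ and $(C_{4j}\square K_2)\square K_2$ as above.

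The main obstacle is step (1): showing that ``complementary color'' agrees with the periodic reversed pattern on both belts through each edge, especially for Y-edges in bi-arcs. This is exactly where the hypothesis $x=d$ is needed.
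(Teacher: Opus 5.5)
Your choice of second coloring is right, and in fact forced: two total colorings with the same vertex colors that differ on $uw$ must use the two colors outside $\{\mu(u),\mu(w)\}$. So $\mu'(uw)$ has to be the unique color outside $\{\mu(u),\mu(w),\mu(uw)\}$. The verification you sketch, however, has a false step at (3) and puts the difficulty in the wrong place at (1).

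\textbf{Step (3).} There is no fixed-point-free involution on a 3-element set, so that justification cannot be correct, and periodicity plays no role here. The correct local argument is as follows. At $v$, put $\mu(v)=c_0$ and $T=[3]_0\setminus\{c_0\}$. The assignment $\sigma\colon\mu(v_i)\mapsto\mu(vv_i)$ is a bijection of $T$, by condition (a) and properness at $v$. It has no fixed point, because $\mu(vv_i)\neq\mu(v_i)$. Hence $\sigma$ is a 3-cycle, and $\mu'(vv_i)=\sigma^2(\mu(v_i))$. The three $\mu'$-colors at $v$ are therefore distinct, and they avoid $c_0$ and $\mu(v_i)$ by definition. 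In other words, the $\mu$-colors at $v$ are carried to the $\mu'$-colors by the 3-cycle $\sigma$, not by an involution.

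\textbf{Step (1).} Once you adopt the complement rule as the definition, this step is vacuous. $\mu'$ is defined edge by edge, so there is no compatibility between the two belts through an edge to establish. Matching $\mu'$ with the reversed belt reading only shows that $\mu'$ is again periodic, i.e.\ that $\{\mu,\mu'\}$ is the DETGC/RETGC pair of the later Lemma. The corollary does not need that. So what you call the main obstacle, and the place where $x=d$ is supposedly needed, is not part of the proof.

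\textbf{Step (4).} This must cover every 4-cycle of $G$, not only the 4-belts. VEGC concerns all girth cycles, and a 4-cycle need not be a belt (e.g.\ the horizontal 4-cycles in the bicutouts of (\ref{te})). Here is the uniform argument. Write $p_0,p_1,p_2,p_3$ for the consecutive vertices of a 4-cycle and also for their distinct colors. Then $\mu(p_ip_{i+1})\in\{p_{i+2},p_{i+3}\}$. Distinctness of the four edge colors forces either $\mu(p_ip_{i+1})=p_{i+2}$ for all $i$, or $\mu(p_ip_{i+1})=p_{i+3}$ for all $i$ (indices mod 4). Complementation swaps these two cases, so $\mu'$ is again VEGC.

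\textbf{Comparison with the paper.} With these repairs your argument proves more than the paper's proof does. The paper only writes down the prism coloring: layer $i$ is colored by $\mu_i$, and the rung at $u$ is colored $\mu_0(u)=\mu_1(u)$. It takes the existence of the orthogonal pair on faith from Example~\ref{santo}. Your complement rule instead produces an orthogonal ETGC partner for every ETGC, periodic or not, and it is exactly what makes each rung square four-colored.

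The rest of your prism check is fine, with one correction to the reason. The rung squares are the only new 4-cycles because a 4-cycle uses zero or two rungs, not because $G$ has girth 4. The special cases follow from the explicit colorings in (\ref{oct}) and (\ref{oct2}) (or from your spray) together with the complement rule.
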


\begin{proof}
 The general situation for any $G$ in the assumptions works similarly as in Example~\ref{santo}. Namely, if $\mu_0$ and $\mu_1$ stand for the ETGCs on the left and right copies of a $G$ as in Figure~\ref{schem}, respectively, $\mu'$ stands for the resulting EGC on $G\square K_2$, $V(K_2)=\{w_0,w_1\}$ and $(u,v)\in E(G)$, then $\mu'((u,w_i),(v,w_i))=\mu_i(u,v)$, for $i=0,1$, 
while being $\mu_0(u)=\mu_1(u)$ the color of vertex $u$ in (either copy of) $G$, then $\mu'((u,w_0),(u,w_1))=\mu_0(u)=\mu_1(u)$.
\end{proof}

\begin{example}\label{santo}
The right side of  (\ref{oct})
shows two mutually orthogonal ETGCs on the standard map $M(Q_3)$ with a common vertex coloring represented in a $(4\times 1)$ xcutout. 

\begin{eqnarray}\label{oct}\begin{array}{cc|cc}
\circ\hspace*{1.2mm}-\hspace*{1.2mm}\circ\hspace*{1.2mm}-\hspace*{1.2mm}\circ\hspace*{2.2mm}..\hspace*{2.2mm}\circ\hspace*{1.2mm}-\hspace*{1.2mm}\circ&&&0\hspace*{2.2mm} _-^2\hspace*{2.2mm} 1\hspace*{2.2mm} _-^3\hspace*{2.2mm} 2\hspace*{2.2mm} _-^0\hspace*{2.2mm} 3\hspace*{2.2mm} _-^1\hspace*{2.2mm} 0\\
:\hspace*{17.5mm}|\hspace*{8.5mm}|\hspace*{6.5mm}:&&&\!_3|\hspace*{6mm}_0|\hspace*{6mm}_1|\hspace*{6mm}_2|\hspace*{6mm}_3|\\
\bullet\hspace*{1.2mm}-\hspace*{1.2mm}\bullet\hspace*{1.2mm}-\hspace*{1.2mm}\bullet\hspace*{1.2mm}-\hspace*{1.2mm}\bullet\hspace*{1.2mm}-\hspace*{1.0mm}\bullet&&&
2\hspace*{2.2mm}_1^-\hspace*{2.2mm}3\hspace*{2.2mm} _2^-\hspace*{2.2mm} 0\hspace*{2.2mm} _3^-\hspace*{2.2mm} 1\hspace*{2.2mm} _0^-\hspace*{2.2mm} 2
\\
\,|\hspace*{8.5mm}|\hspace*{27.0mm}|&&&\\
\circ\hspace*{6.9mm}\circ\hspace*{1.2mm}-\hspace*{1.2mm}\circ\hspace*{1.2mm}-\hspace*{1.2mm}\circ\hspace*{1.2mm}-\hspace*{1.0mm}\circ&&&\mbox{Two ortogonal ETGCs}\\
\,|\hspace*{8.5mm}|\hspace*{8.5mm}|\hspace*{8.5mm}|\hspace*{7.5mm}|&&&\\
\circ\hspace*{6.9mm}\circ\hspace*{1.2mm}-\hspace*{1.2mm}\circ\hspace*{6.6mm}\circ\hspace*{1.2mm}-\hspace*{1.0mm}\circ&&&0\hspace*{2.2mm} _-^3\hspace*{2.2mm} 1\hspace*{2.2mm} _-^0\hspace*{2.2mm} 2\hspace*{2.2mm} _-^1\hspace*{2.2mm} 3\hspace*{2.2mm} _-^2\hspace*{2.2mm} 3\\
\,|\hspace*{8.5mm}|\hspace*{8.5mm}|\hspace*{8.5mm}|\hspace*{7.5mm}|&&&\!_1|\hspace*{6mm}_2|\hspace*{6mm}_3|\hspace*{6mm}_0|\hspace*{6mm}_1|\\\
\circ\hspace*{1.2mm}-\hspace*{1.2mm}\circ\hspace*{1.2mm}-\hspace*{1.2mm}\circ\hspace*{2.2mm}..\hspace*{2.2mm}\circ\hspace*{1.2mm}-\hspace*{1.2mm}\circ&&&2\hspace*{2.2mm} _0^-\hspace*{2.2mm}3\hspace*{2.2mm} _1^-\hspace*{2.2mm} 0\hspace*{2.2mm} _2^-\hspace*{2.2mm} 1\hspace*{2.2mm} _3^-\hspace*{2.2mm} 2 \\
\end{array}\end{eqnarray}

\begin{figure}[htp]
\includegraphics[scale=0.57]{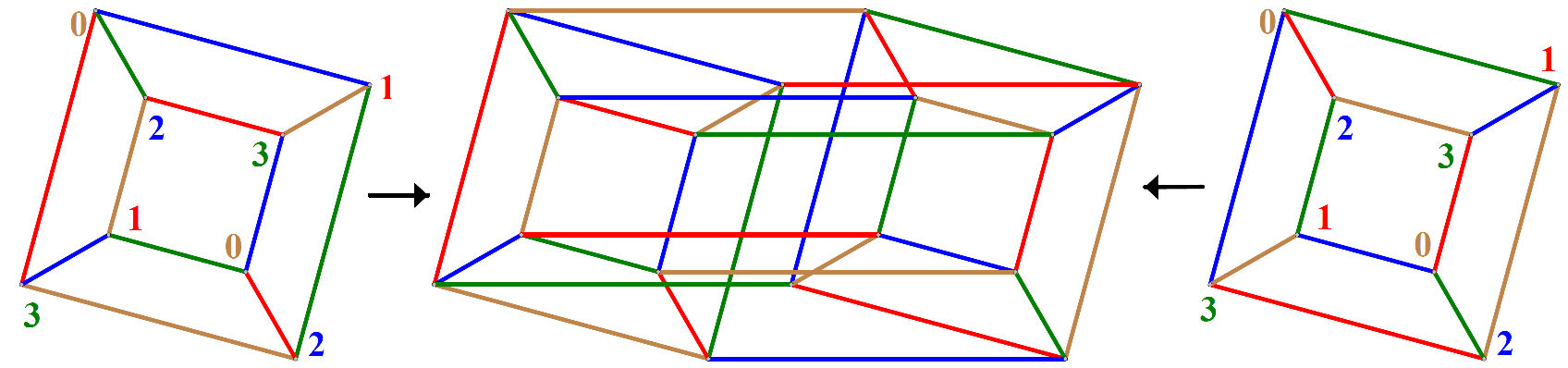}
\caption{Illustration for Corollary~\ref{under}. Colors as in (\ref{colors}).}
\label{schem}
\end{figure}

Figure~\ref{schem} represents on its left and right sides these two ETGCs, where colors
are given by color numbers on vertices and by colors only on edges. These colors are combined in the center of the figure yielding the claimed EGC on the cartesian product $G\square K_2$.
\end{example}

\subsection{Maps that admit periodic partitions into EDSs}

\begin{definition}\label{bicudo} 
By ordering the vertices of the belts of each Y-map $M(G)$ as a list so that the vertices of each belt are presented contiguously cyclically following their mutual adjacencies and each new belt in the list located adjacent to some previous belt, 
we will only consider from now on Y-maps $M(G)$ that admit partitions into EDSs obtained by the simple continuation algorithm of denoting the successive vertices in the list periodically with the colors 0, 1, 2, 3, so that the vertex color cycle of each $\ell$-belt is of the form $(a,b,c,d,\ldots,a,b,c,d)=(a,b,c,d)^{\frac{\ell}{4}}$ with period $(a,b,c,d)$, where $\{a,b,c,d\}=[3]_0$. A Y-map that admits such a partition will be said to be a $\Pi$-map. Such a Y-map clearly admits ETGCs that are periodic.
A Y-graph $G$ having a $\Pi$-map $M(G)$ will be said to be a $\Pi$-graph. 
\end{definition}

\begin{example}
In the lower-left corner of Figure~\ref{abel}, a 6-cutout of a toroidal map $M(G)$ for a cubic graph $G$ of girth 4 and all belts with lengths divisible by 4 is shown that is not a $\Pi$-map, where $V(G)$ is assigned colors as in  (\ref{colors}).
This 6-cutout is obtained as a modification of the 6-cutout on the left of Figure~\ref{20-gray}; (see also Examples~\ref{contra}-\ref{contra1}-\ref{contra2}). Such 6-cutout is followed to its right by the bicutout on the right of Figure~\ref{kk} (Example~\ref{klein}) which is also shown not to be a $\Pi$ map. On the left side of   (\ref{oct}), another bicutout of a toroidal map yields a case of a non-$\Pi$-graph, not even Y-graph (Example~\ref{contrera}). 
\end{example}

\begin{question}\label{mapy} 
In Definition~\ref{bicudo}, can we consider a larger class of maps $M(G)$ than that of Y-maps, that is: not necessarily with pairs $(e_h,e_{h+\ell})$ of opposite edges in $2\ell$-belts of $M(G)$ with associated cycles $(e_1,e_2,\ldots,e_{2\ell-1},e_{2\ell})$ of subsequently adjacent edges, where $1\le h\le\ell$? 
\end{question}

\begin{definition}\label{opto} Let  $\mu$ be an ETGC of a $\Pi$-map $M(G)$. A belt of $M(G)$ is said to be {\it directed} (resp. {\it reversed}) if it has each color-0 edge $e$ in the clockwise (resp. counterclockwise) direction from a color-0 vertex $v$, meaning that there is distance 1 from $v$ to the nearest end-vertex of $e$.
If every belt of an ETGC $\mu$ is directed (resp.  reversed), we say that $\mu$ is a {\it DETGC}, (resp. {\it RETGC}). 
A belt that is neither directed nor reversed is said to be {\it undirected}. An ETGC of $M(G)$ is said to an {\it UETGC} if it contains an undirected belt.
\end{definition}

\begin{lemma}  Let  $\mu$ be an ETGC of a $\Pi$-map $M(G)$. 
Corresponding to the vertex coloring $\mu_V$ participating in $\mu$, there are two mutually orthogonal subjacent edge colorings $\mu^D$ and $\mu^R$ leading to two respective  mutually orthogonal ETGCs distinguishable as the DETGC $\mu_V^D=(\mu_V,\mu^D)$ and the RETGC $\mu_V^R=(\mu_V,\mu^R)$. Then, either $\mu=\mu_V^D$, or $\mu=\mu_V^R$. 
\end{lemma}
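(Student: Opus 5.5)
Once the vertex coloring $\mu_V$ of a $\Pi$-map is fixed, the edge colors are determined locally. In an ETGC the closed neighborhood $N_G[v]$ receives all four colors bijectively. The three edges at $v$ must receive the three colors different from $\mu(v)$. Since every girth cycle, and by Theorem~\ref{fo} and Definition~\ref{bicudo} every belt, carries the vertex color pattern $(a,b,c,d)^{\ell/4}$, the color of an edge $e=(u,w)$ is forced to lie in $[3]_0\setminus\{\mu(u),\mu(w)\}$. So for each edge there are exactly two admissible colors. I would show these two candidates are realized by two globally consistent rules, and that any ETGC must follow one of them uniformly.

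Step 1 is to read off the local rule along a belt. In a belt with vertex cycle $(a,b,c,d)$ repeated, consider the edge $b\,\text{--}\,c$. Its possible colors are $a$ and $d$, namely the color of the vertex preceding it or of the vertex following it in the clockwise order. Comparing with the O-arc pattern (\ref{reverse}), $a\,_\rightarrow^b\,c\,_\rightarrow^a\,d\,_\rightarrow^c\,b$, each edge gets the color of the vertex two steps behind it, i.e.\ at distance 1 from its nearer end in the counterclockwise direction. The reverse reading gets the color of the vertex at distance 1 ahead. Call these rules $\mu^D$ and $\mu^R$, matching Definition~\ref{opto}: in $\mu^D$ every color-$0$ edge sits clockwise at distance 1 from a color-$0$ vertex, and in $\mu^R$ counterclockwise. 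On every edge the two rules give the two distinct admissible colors, so $\mu^D$ and $\mu^R$ differ everywhere. That is the mutual orthogonality of Definition~\ref{ortho}, provided both are ETGCs.

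Step 2 is consistency. Each edge lies on exactly two belts, traversed in opposite directions by the orientation of the surface. I would check that the rule ``color of the vertex behind'' in one belt agrees with the same rule in the adjacent belt. At a vertex $v$ with neighbors $x,y,z$, the rotation $p_v$ orders the three incident belts, and the three neighbor colors are distinct. Define $\mu^D(v,x)$ as the color of the neighbor of $v$ preceding $x$ in the rotation $p_v$. This is a single vertex-local rule, so it is well defined on both sides of each edge. The check needed is that it agrees when computed from the other endpoint $x$. I would verify this by inspecting the 4-belt neighborhoods in (\ref{vac}) and the bi-arcs (\ref{biarc}), using that periodicity forces the second-neighbor colors along belts. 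The same argument with the following neighbor gives $\mu^R$. Properness and the ETC/VEGC conditions then follow, because each belt carries the periodic pattern (\ref{sche}) and each color class of $\mu_V$ is an EDS by the $\Pi$-map hypothesis.

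Step 3 is uniqueness: any ETGC $\mu$ extending $\mu_V$ equals $\mu_V^D$ or $\mu_V^R$. At a single vertex $v$, the bijection of Definition~\ref{ahora}(a) gives a permutation matching edges to the two rules. Since each edge has only two options and the three edge colors at $v$ must be distinct, $\mu$ at $v$ follows $\mu^D$ on all three edges or $\mu^R$ on all three; mixing would create a repeated color. Propagating along the connected graph $G$, this choice is shared across each edge, since an edge colored by the ``D'' value from one end is colored by the ``D'' value from the other end, by Step 2. Hence the choice is global. I expect Step 2, consistency at both endpoints across clusters and H-edges, to be the main obstacle. I would first settle it on 4-belts via (\ref{vac}) and then extend to Y-belts via the bi-arc structure.
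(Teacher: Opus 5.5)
Your argument is sound, and it takes a genuinely different route from the paper's. The paper argues belt by belt. In each belt the colour-$0$ edge nearest a colour-$0$ vertex lies either clockwise or counterclockwise, so the belt is directed or reversed. It then simply invokes the $\Pi$-map hypothesis to conclude that all belts are of one kind. It never explains why one belt cannot mix the two behaviours, why belts sharing a vertex must agree, or why both edge colourings exist and are orthogonal.

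You work at vertices instead. The three edge colours at $v$ must be a derangement of the three neighbour colours, and the only two derangements are ``preceding in $p_v$'' and ``following in $p_v$''. The type is then carried across each edge, and connectivity makes it global. This supplies exactly what the paper's two-line sketch omits: no mixing within a belt, agreement across belts, and orthogonality for free, since on each edge the two rules pick the two distinct admissible colours. The cost is length.

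Step 2, which you leave as a plan, needs no case analysis of (\ref{vac}) or (\ref{biarc}). Suppose the face on one side of $uw$ reads $p,u,w,q$ along its boundary, and the face on the other side reads $q',w,u,p'$. Periodicity of the first belt gives $\mu(p)\neq\mu(q)$, so $\{\mu(p),\mu(q)\}=[3]_0\setminus\{\mu(u),\mu(w)\}$. Since $N_G[w]=\{w,u,q,q'\}$ is rainbow, $\mu(q')$ lies in this set and differs from $\mu(q)$. Hence $\mu(q')=\mu(p)$: the vertex ``behind'' $uw$ has the same colour seen from either side.

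Two further points should be made explicit.

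First, Theorem~\ref{fo} and Definition~\ref{bicudo} do not by themselves give that $\mu_V$ has the pattern $(a,b,c,d)^{\ell/4}$ on every belt. The definition only says the map admits such a partition. You, like the paper, must read the hypothesis that way, and the UETGCs of Examples~\ref{contra1} and~\ref{contra2} show the statement needs it.

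Second, the periodic belt patterns give the girth condition only on facial $4$-cycles. Take a non-facial $4$-cycle $v_0v_1v_2v_3$ with indices mod $4$. Its vertex colours $c_0,\dots,c_3$ are distinct, and edge $v_iv_{i+1}$ may take $c_{i+2}$ or $c_{i+3}$. The only edge-rainbow choices are ``all $c_{i+2}$'' and ``all $c_{i+3}$'', which are complementary edge by edge. Your Step 3 shows that $\mu$ is one of $\mu_V^D,\mu_V^R$, and the other is its edgewise complement. So the other is also edge-rainbow on every girth cycle, hence also an ETGC.
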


\begin{proof}
Let $v\in V(G)$ have $\mu_V(v)=0$ and let $B$ be a belt of the $\Pi$-map $M(G)$ containing $v$. Then, either the nearest color 0 edge $e$ in $B$ is in the clockwise direction with respect to $v$, in which case $B$ is directed, or $e$ is in the counterclockwise direction with respect to $v$, in which case $B$ is reversed. Since $M(G)$ is a $\Pi$-map, all belts of $M(G)$ are either directed or reversed, so either $\mu$ is a DETGC $\mu=\mu_V^D$ or $\mu$ is an RETGC $\mu=\mu_V^R$.
\end{proof}

\begin{example}\label{contra1}
Let us spray  the $\Pi$-map $M(G)$ represented by the bicutout $\Upsilon$ in Figure~\ref{20-gray}, with the set $\mathcal{B}$ formed by the cluster given by the leftmost 4-belt $B_1$, then the 4-belt $B_2$ divided in two by the vertical side of $\Upsilon$, then the rightmost 4-belt $B_3$. Starting from the leftmost vertex of $B_1$, the directed O-arc $(0\,_\rightarrow^3\,1\,_\rightarrow^0\,2\,_\rightarrow^1\,3\,_\rightarrow^2)$ is obtained. Then $B_2$ gets directed O-arc $(0\,_\rightarrow^2\,3\,_\rightarrow^0\,1\,_\rightarrow^3\,2\,_\rightarrow^1)$, and $B_3$ gets directed O-arc $(2\,_\rightarrow^3\,1\,_\rightarrow^2\,0\,_\rightarrow^1\,3\,_\rightarrow^0)$, inducing two colored edges of type {\bf(b)} to the central cluster $\mathbb{Q}$ of $\Upsilon$, namely $(1\,_\rightarrow^2\,3)$ from $B_1$ and $(0\,_\rightarrow^3\,2$ from $B_3)$. 
Say $\mathbb{Q}$ has 4-belts $B_4,B_5,B_6$ from bottom to top. Continuation of spray on them are forced to be reversed O-arcs  
$(2\,_\rightarrow^0\,1\,_\rightarrow^3\,0\,_\rightarrow^2\,3\,_\rightarrow^1)$,
$(3\,_\rightarrow^2\,0\,_\rightarrow^1\,2\,_\rightarrow^3\,1\,_\rightarrow^0)$ and
$(1\,_\rightarrow^3\,2\,_\rightarrow^0\,3\,_\rightarrow^1\,0\,_\rightarrow^2)$, respectively. The only remaining cluster is forced to have directed O-arcs. It follows that the 16-belt presented in  (\ref{nelly2}) and the remaining pair of 8-belts are aperiodic forcedly.
\end{example}

\begin{example}\label{contra2} The ETGC represented in the upper-right $(4\times 1)$ xcutout of (\ref{oct}) has,
in every belt, each color-0 edge $e$ nearer in the counterclockwise direction to a color-0 vertex $v$, meaning that there is distance 1 from $v$ to the nearest end-vertex of $e$. In such a case, we say that the ETGC is a DETGC. An RETGC is the case of the ETGC represented in the lower-right $(4\times 1)$ xcutout of (\ref{oct}), namely with each color-0 edge $e$ nearer in the clockwise direction to a color-0 vertex $v$ in each belt. 
However, in the 16-cycle on the right cutout $\Upsilon$ in Figure~\ref{20-gray} covering the four-corners vertex of $\Upsilon$, with color cycle decomposed in (\ref{nelly2}) into O-arcs, the lowest color-0 vertex and color-0 edge are at distance more than one from any other color-0 edge or color-0 vertex, respectively. In such a case, the ETGC is a UETGC. This is also the case 
of the 8-cycle (\ref{nelly1}) in the bicutout on the right of (\ref{te}).  
\end{example} 

\begin{proposition}
DETGCs (resp. RETGCs) only happen as on the left (resp. right) of (\ref{reverse}) with $x=d$.
On the other hand, in a UETGC, (\ref{reverse}) occurs with $x\ne d$ at least once, as it contains undirected belts, not complying with the cyclic disposition of (\ref{sche}) or its reverse. Thus, both DETGCs and RETGCs are periodic ETGCs, while UETGCs are aperiodic ETGCs.
\end{proposition}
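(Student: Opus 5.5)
The plan is to work belt by belt, using the fact established in the proof of Theorem~\ref{fo} that every belt color cycle of an ETGC of a Y-map splits into O-arcs, together with bi-arcs as in (\ref{biarc}), each of the form (\ref{reverse}). Periodicity (Definition~\ref{periodic}) is a property of how consecutive O-arcs are glued, namely whether the trailing edge color $x$ equals $d$. Directedness (Definition~\ref{opto}) concerns the distance between a color-0 vertex and the nearest color-0 edge. So the proof should translate one condition into the other inside a single O-arc and then propagate along the belt.

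First I will fix an O-arc $a\,_\rightarrow^b\,c\,_\rightarrow^a\,d\,_\rightarrow^c\,b\,_\rightarrow^x$. Its edge colors are obtained from the vertex colors by a fixed rule: the edge leaving the $k$-th vertex gets the color of the $(k-1)$-th vertex. Hence every vertex color $y$ has its edge of color $y$ one step ahead, in the reading direction, of the vertex colored $y$. Setting $x=d$ closes the pattern into the cyclic word (\ref{sche}). In that word, every color-0 vertex has the color-0 edge at distance $1$ on the same side, clockwise if the arc is read clockwise and counterclockwise for the reversed reading. This shows that a belt whose cycle is (\ref{sche}) read clockwise is directed, and that its reverse is reversed. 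Conversely, suppose a belt is directed. Then every color-0 edge lies at distance $1$ clockwise from a color-0 vertex. By symmetry of the color labels, the same holds for each color, because the O-arc rule ties all four colors together once the vertex pattern is the $\Pi$-periodic $(a,b,c,d)^{\ell/4}$ of Definition~\ref{bicudo}. This forces the edge after $b$ to carry color $d$, that is $x=d$, and it forces every O-arc to be read in the left form of (\ref{reverse}). The reversed case is the mirror image. This gives the first sentence of the proposition.

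Second, for a UETGC I will take an undirected belt $B$. If every O-arc of $B$ had $x=d$ and a common reading orientation, then by the first step $B$ would be directed or reversed, which is a contradiction. I also have to exclude the case where all the arcs have $x=d$ but their orientations are mixed. At a junction between a left-form arc and a right-form arc, the last edge of one arc meets the first vertex of the reversed pattern. Checking the four colors there shows the shared edge color cannot equal the required $d$, so some occurrence of (\ref{reverse}) has $x\neq d$. This contradicts the cyclic disposition (\ref{sche}) and its reverse, so the UETGC is aperiodic by Definition~\ref{periodic}. Combining this with the first step, DETGCs and RETGCs have $x=d$ throughout and are therefore periodic. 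The trichotomy in Definition~\ref{opto} then gives the final dichotomy.

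The step I expect to be hardest is the converse in the first step, namely that directedness alone forces $x=d$, together with the mixed-orientation junction check. Definition~\ref{opto} only mentions color $0$, so I will lean on the $\Pi$-map hypothesis: the vertex colors are periodic of period $4$, and with fixed vertex colors the ETC condition leaves only the two edge patterns $\mu^D$ and $\mu^R$ of the preceding lemma. If that route stalls, I will instead do a direct finite case check of the possible colorings of two consecutive O-arcs, using Example~\ref{contra} (\ref{nelly1}) as a template for the $x\ne d$ junction.
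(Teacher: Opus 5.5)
\textbf{The proposal has genuine gaps.} Your forward direction is fine: a belt that reads as (\ref{sche}) clockwise is directed. The trouble is every step that tries to recover (\ref{sche}) from local data, and both claims behind your UETGC half are false.

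First, ``setting $x=d$ closes the pattern into (\ref{sche})'' fails. The condition $x=d$ only forces the vertex after $b$ to be coloured $a$: if it were $c$, the third edge at $b$ would receive the colour $a$ forced on the third neighbour of $b$. The vertex after that $a$ may then be $c$ or $d$, so consecutive arcs with $x=d$ need not repeat. For example, the $12$-belt $(1\,_\rightarrow^3\,0\,_\rightarrow^1\,2\,_\rightarrow^0\,3\,_\rightarrow^2)(1\,_\rightarrow^0\,2\,_\rightarrow^1\,3\,_\rightarrow^2\,0\,_\rightarrow^3)(1\,_\rightarrow^2\,3\,_\rightarrow^1\,0\,_\rightarrow^3\,2\,_\rightarrow^0)$ satisfies the total-colouring and closed-neighbourhood constraints at every vertex. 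It is a concatenation of left-form O-arcs, each with $x=d$. Yet it is undirected: the colour-$0$ edge opening the second arc has colour-$3$ vertices at distance $1$ on both sides. So ``common orientation and $x=d$ throughout'' does not yield directed or reversed.

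Second, the junction check fails. Take the $8$-belt with vertices $v_1,\ldots,v_8$ coloured $1,2,0,3,1,2,3,0$ and edges $v_iv_{i+1}$ coloured $0,3,1,2,0,1,2,3$. Read clockwise, $v_6v_7v_8v_1$ is $2\,_\rightarrow^1\,3\,_\rightarrow^2\,0\,_\rightarrow^3\,1\,_\rightarrow^0$. Read counterclockwise, $v_5v_4v_3v_2$ is $1\,_\rightarrow^2\,3\,_\rightarrow^1\,0\,_\rightarrow^3\,2\,_\rightarrow^0$. Both are (\ref{reverse}) with $x=d$, in opposite orientations. The belt is nevertheless undirected, since the colour-$0$ edge $v_5v_6$ has $v_4$ and $v_7$, both coloured $3$, at distance $1$. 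So no four-colour check at a junction can force $x\neq d$.

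The converse in your first step has the same defect. Directedness constrains colour $0$ only, and ``symmetry of the colour labels'' does not transfer that condition to the other colours. Indeed, swapping colours $0$ and $1$ in the $12$-belt above gives a directed belt that is not of the form (\ref{sche}).

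Your fallback, that the ETGC's own vertex colouring is $\Pi$-periodic on every belt, is not a hypothesis. A $\Pi$-map only admits such a partition, and a given ETGC need not use it; (\ref{nelly1}) does not. Granting it would also prove too much:
\begin{itemize}
\item $x=d$ would follow at once from $x\notin\{b,c\}$ and $x\neq a$ (the next vertex is coloured $a$), with no use of directedness;
\item each belt's edge colouring would be forced to be (\ref{sche}) or its reverse, so undirected belts could not exist and the UETGC half would be vacuous.
\end{itemize}

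The paper's proof does not attempt these derivations. It takes every belt of a DETGC (resp.\ RETGC) to be cyclic as in (\ref{sche}) (resp.\ its reverse), reads off $x=d$, and notes that an undirected belt breaks that form. To make your route work you must adopt that identification explicitly, or use a finer invariant. One such invariant is the rotation sense of the edge colours at each vertex. With $c_k$ the colour of $v_k$, this sense agrees at the two ends of $v_kv_{k+1}$ exactly when $c_{k+2}\neq c_{k-1}$.
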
 

\begin{proof}
In the case of DETGCs (resp.  RETGCs), any instance of the left (resp. right) side of (\ref{reverse}) must have $x=d$ because every belt is cyclic as in (\ref{sche}) (resp. its reverse). This fails at least once in the case of UETGCs.
\end{proof}

\subsection{Plan of the rest of the paper}

In \cite{+1}, two cases of simple cubic graphs $G$ of girth 4 were considered, namely planar and toroidal graphs, that were extended subsequently to graphs embeddable in surfaces of larger genera. In the present work, apart from Questions~\ref{preg}, \ref{mapy}, \ref{mapz}, \ref{mapx} and~\ref{alfin},  
three 
conjectures are posed: First, Conjecture~\ref{toroid}, asserting that any simple cubic graph $G$ that is {\it toroidally 3-edge-connected} (see Definition~\ref{bicu}) with a $\Pi$-map $M(G)$ whose $\ell$-belts have $\ell\equiv 0 \mod 4$ has an ETGC. Second, 
Conjecture~\ref{mango}, asserting that all connected simple cubic $\Pi$-maps $M(G)$ of girth 4 in genus-realizing orientable surfaces having just $\ell$-belts for values $\ell\equiv 0$ mod  4 contain ETGCs with associated {\it 3-permutations face colorings} (defined in Subsection~\ref{face}) from which the original ETGCs can be recovered, allowing unique induced EGCs in the prisms $G\square K_2$. Third,
updating \cite[Conj. 31]{+1} asserting that all existing ETGCs on connected simple cubic graphs of girth 4 are obtained from the smallest cubic graph of girth 4, namely the 3-cube, by applying four constructive tools, denoted here as {\it spray} (Definition~\ref{sabado}, or {\it ETCing} \cite[Def. 27]{+1}, namely a color propagation rule), {\it extensions} (Definition~\ref{pe}, or \cite[Def. 11]{+1}), {\it unfoldings} (Definition~\ref{unfold}, or \cite[Def. 15]{+1}) and {\it exchanges} (Definition~\ref{exchange}, or \cite[Rem. 23]{+1}). 
Those tools are updated in the present work for the case of $\Pi$-maps. 
In fact, three more tools, namely {\it amalgam} (Definition~\ref{amalgam}), {\it piercing} (Section~\ref{p}) and {\it PDS-complementation} (Section~\ref{Messi}) are added to those four, resulting in Conjecture~\ref{con1}.

\subsection{Face colorings by the six 3-permutations}\label{face}

Let $G$ be a connected simple cubic graph of girth 4 and genus equal to that of an orientable surface $S$. Let $M(G)$ be a $\Pi$-map of $G$ in $S$. Assume that the faces of $M(G)$ have their $\ell$-belts with $\ell\equiv 0 \mod 4$. Given an ETGC $\mu':V(G)\cup E(G)\rightarrow[3]_0$, let the faces of $M(G)$ have $\ell$-belts resulting from periodic concatenations of one of the two path patterns shown in  (\ref{reverse}) always with $x=d$. A 3-permutation $\mu(B)$ is associated to each belt $B$ of $M(G)$, as follows.
Let $0,a,b,c$ in that order be the colors of $[3]_0$ assigned by $\mu'$ to any set of four consecutive vertices starting with 0 and in the clockwise direction of $B$, where $a,b,c$ are pairwise different and distinct from $0$. This gives us, we assign to $B$ the 3-permutation $\mu(B)=abc$, expressed as $\mu(B)=abc$.  This yields a 3-permutation face coloring of $M(G)$. 
 
\begin{theorem}\label{fifo} A periodic ETGC $\mu'$ of a $\Pi$-map $M(G)$ obtained as in Theorem~\ref{tt} has an associated 3-permutation face coloring $\mu$ such that no two adjacent faces of $M(G)$ are assigned by $\mu$ the same 3-permutation color.
\end{theorem}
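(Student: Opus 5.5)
We prove Theorem~\ref{fifo} by comparing the clockwise readings of a single shared edge from its two sides. In an orientable surface, an edge $e=(u,w)$ lying on two adjacent belts $B$ and $B'$ is traversed in opposite directions: if $B$ meets $u$ before $w$ clockwise, then $B'$ meets $w$ before $u$. So it suffices to show that a belt's 3-permutation is determined by the colors along any one of its edges read clockwise, and that reversing the direction of that reading always changes the resulting 3-permutation.

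First we make the well-definedness of $\mu(B)$ precise. Since $\mu'$ is periodic (Definition~\ref{periodic}), Theorem~\ref{tt} and (\ref{sche}) show that each belt $B$ has vertex color cycle $(0,a,b,c)^{\ell/4}$ when read clockwise. Hence $\mu(B)=abc$ does not depend on which color-0 vertex we start from. It follows that the ordered pair of vertex colors $(\mu'(u),\mu'(w))$ of any clockwise-consecutive pair in $B$ determines $\mu(B)$ completely.

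\begin{itemize}
\item If $\mu'(u)=0$, then $a=\mu'(w)$.
\item If $\mu'(w)=0$, then $c=\mu'(u)$.
\item Otherwise $(\mu'(u),\mu'(w))$ is $(a,b)$ or $(b,c)$, that is, a consecutive pair in the cyclic order $(a,b,c)$ of the nonzero colors.
\end{itemize}
In every case, $B$ induces the cyclic order $(0,a,b,c)$ on $[3]_0$, and the clockwise pair $(\mu'(u),\mu'(w))$ is a consecutive pair in that 4-cycle.

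Now suppose adjacent faces $B$ and $B'$ share the edge $(u,w)$ and $\mu(B)=\mu(B')=abc$. Then both induce the same cyclic order $(0,a,b,c)$. In $B$ the pair $(\mu'(u),\mu'(w))$ is consecutive in this order, while in $B'$ the reversed pair $(\mu'(w),\mu'(u))$ is also consecutive in the same order. A cyclic order on four elements cannot contain both $(x,y)$ and $(y,x)$ as consecutive pairs, since that would force the cycle to have length 2. This is a contradiction, so $\mu(B)\ne\mu(B')$.

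The main obstacle is the orientation bookkeeping. We must justify that "clockwise" is a consistent orientation of every face boundary. This uses the orientability of $S$ and the rotation system of Definition~\ref{belt}, under which adjacent faces induce opposite orientations on a common edge. We must also check that belts crossing the boundary of a cutout are still read coherently; this is where the Y-map hypothesis, as discussed in Remark~\ref{ndrangheta}, ensures that faces split by the cutout borders are recovered correctly. A further point to settle is whether a face could be adjacent to itself along an edge. In that case the same argument applied to the single belt $B$ traversing $e$ in both directions already yields a contradiction, so such edges cannot occur under a periodic ETGC, and the statement remains consistent.
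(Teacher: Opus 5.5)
Your argument is correct, but it takes a different route from the paper. The paper fixes $\mu(B)=abc$ and, for each of the four possible color pairs $\{0,a\},\{a,b\},\{b,c\},\{c,0\}$ on the shared edge, writes down $\mu(B')$ explicitly, namely $bca$, $bac$, $acb$, $cab$ respectively, and notes that none of these equals $abc$. Those table entries are stated without justification. Deriving them needs more than the reversal of the shared edge: one also needs a third vertex of $B'$, the third neighbor of an end-vertex of the edge. That vertex lies on $B'$ by the rotation at that end-vertex, and its color is forced by the closed-neighborhood condition (a) of Definition~\ref{ahora}.

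You avoid this entirely. You use only that each belt reads $(0,a,b,c)^{\ell/4}$ clockwise and that adjacent faces traverse a common edge in opposite directions. Then the fact that a $4$-cycle cannot contain both $(x,y)$ and $(y,x)$ as successor pairs finishes the argument. Your proof is therefore shorter and needs less of the ETGC structure, only per-belt periodicity of the vertex colors. It also handles a possible self-adjacent face cleanly.

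What the paper's case table gives in exchange is the explicit transition rule between adjacent faces: a transposition when both ends of the edge have nonzero colors, and a $3$-rotation when one end has color $0$. That rule is exactly what Theorem~\ref{lapsus} and Remark~\ref{lapsi} use later, to recover the vertex coloring from a PFC and to color the dual edges. Your argument does not supply it.
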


\begin{proof}

Let $B,B'$ be belts of $M(G)$ sharing an edge $(u,v)\in E(G)$. Let $\mu(B)=abc$. Then:

If $\mu\{u,v\}=\{0,a\}$, then $\mu(B')=bca$.\;\; If $\mu\{u,v\}=\{a,b\}$, then $\mu(B')=bac$.

If $\mu\{u,v\}=\{b,c\}$, then $\mu(B')=acb$.\;\; If $\mu\{u,v\}=\{c,0\}$, then $\mu(B')=cab$. 

\noindent Thus, no two adjacent faces of $M(G)$  are assigned by $\mu$ the same 3-permutation color.
\end{proof}

\begin{definition}\label{pipi}
A 3-permutation face coloring $\mu$ in $M(G)$ obtained as in Theorem~\ref{fifo} will be said to be a {\it PFC}.
\end{definition}

\begin{theorem}\label{lapsus} Let $G$ be a connected simple cubic graph of girth 4 and let $M(G)$ be a $\Pi$-map of $G$ in a surface $S$ of genus $g(G)$. Then,
there is a one-to-one correspondence between the DETGCs (resp. RETGC) $\mu'$ and the PFCs $\mu$.
\end{theorem}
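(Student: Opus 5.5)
We build the correspondence explicitly. In one direction we use the map $\mu'\mapsto\mu$ already set up in Subsection~\ref{face} and Theorem~\ref{fifo}. In the other we reconstruct $\mu'$ from $\mu$ together with the directed (resp. reversed) convention. We then check that the two maps are mutually inverse.

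\textbf{Step 1 (the forward map is well defined).} By the Proposition on DETGCs and RETGCs, a DETGC (resp. RETGC) $\mu'$ is periodic. So every $\ell$-belt $B$ carries the schematic cycle (\ref{sche}) (resp. its reverse) with period $(0,a,b,c)$ read clockwise. The $3$-permutation $\mu(B)=abc$ therefore does not depend on which color-$0$ vertex of $B$ we start from. Theorem~\ref{fifo} shows that $\mu$ is a PFC.

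\textbf{Step 2 (a belt's whole coloring is recovered from its permutation).} Suppose $\mu(B)=abc$ is known. Because $M(G)$ is a $\Pi$-map, the vertex color cycle of $B$ is $(0,a,b,c)^{\ell/4}$. It is anchored by the global vertex coloring, which is common to $\mu_V^D$ and $\mu_V^R$ by the Lemma. The edge colors of $B$ are then forced by (\ref{reverse}) with $x=d$. In the directed case, the edge following a vertex of color $a$ clockwise gets the color of the vertex two steps ahead, as in the left pattern of (\ref{reverse}). In the reversed case we use the right pattern instead. Thus $\mu'|_B$ is a function of $\mu(B)$ and the chosen orientation. This gives injectivity: two DETGCs (resp. RETGCs) with the same PFC agree on every belt, and every vertex and edge lies on some belt.

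\textbf{Step 3 (surjectivity and consistency).} This is the main obstacle. Start from an arbitrary PFC $\mu$, i.e.\ one arising via Theorem~\ref{fifo}, and define edge colors belt by belt as in Step 2. An edge $(u,v)$ lies on two belts $B,B'$, so we must check that the two induced colors agree. The plan is a case check on the four adjacency rules listed in the proof of Theorem~\ref{fifo}: for $\{\mu(u),\mu(v)\}=\{0,a\},\{a,b\},\{b,c\},\{c,0\}$, the permutation $\mu(B')$ is respectively $bca,bac,acb,cab$. In each case, verify directly that the pattern (\ref{reverse}) read in $B'$, which traverses $(u,v)$ in the opposite sense, assigns $(u,v)$ the same color as in $B$.

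After that, verify that the result is an ETGC. Proper total coloring and the bijection condition of Definition~\ref{ahora}(a) at a vertex $v$ follow by examining the three belts around $v$ pairwise with the same rules. Efficient domination of the color classes (Definition~\ref{ahora}(b)) and the girth condition hold because $\mu_V$ is the $\Pi$-partition and each $4$-belt carries a full $4$-cycle of colors.

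Finally, the resulting ETGC is directed (resp. reversed) by construction, and applying the forward map returns $\mu$. Hence the two maps are inverse bijections. The RETGC case follows symmetrically by reversing all orientations, equivalently by replacing the left pattern of (\ref{reverse}) with the right one.
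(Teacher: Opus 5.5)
Your Step 2 has a gap, and it is the step your whole injectivity argument rests on. The permutation $\mu(B)=abc$ records only the cyclic order $(0,a,b,c)$ of colors along $B$. It does not record where the color-$0$ vertices of $B$ sit: all four phase shifts of $(0,a,b,c)^{\ell/4}$ on $B$ give the same $abc$. So the claim that $\mu'|_B$ is a function of $\mu(B)$ and the chosen orientation is false as stated.

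Anchoring by ``the global vertex coloring, common to $\mu_V^D$ and $\mu_V^R$ by the Lemma'' does not repair this. The Lemma concerns one fixed vertex coloring $\mu_V$, and over a fixed $\mu_V$ there is only one DETGC anyway. Injectivity must instead compare two DETGCs $\mu'_1,\mu'_2$ that induce the same PFC but whose vertex colorings are not known to agree. The fact you actually need, that $\mu$ determines $\mu'_V$, is assumed rather than proved.

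The paper's proof is built around exactly this step, and it gets it by reading $\mu$ locally. There are two ways to do it.

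\emph{Via edges.} The four rules in the proof of Theorem~\ref{fifo} give the neighbors $bca,bac,acb,cab$ of $abc$ for the endpoint color sets $\{0,a\},\{a,b\},\{b,c\},\{c,0\}$, and these four are pairwise distinct. So the pair $(\mu(B),\mu(B'))$ across an edge determines the color set of that edge's ends, i.e.\ the transposition or 3-rotation relating the two faces. Together with the clockwise orientation of $B$, this fixes the phase of $B$.

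\emph{Via vertices.} At a vertex $v$, the three incident face permutations form a cyclic class, $\{123,231,312\}$ or $\{132,213,321\}$, exactly when $\mu'(v)=0$. Otherwise $\mu'(v)$ is the unique symbol occupying all three positions among them (the paper's $\{abc,bac,acb\}\mapsto b$).

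Once $\mu'_V$ is recovered from $\mu$ alone, your Step 2 does force the edge colors, and injectivity follows.

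Your Step 3 is sound once a PFC is read as induced by some periodic ETGC, since the PFC depends only on vertex colors. The consistency check in fact works uniformly. In a directed belt the edge $(v_i,v_{i+1})$ receives $\mu'(v_{i-1})$. This agrees from both faces at the edge because $N[u]$ and $N[v]$ each carry all four colors, while four consecutive belt vertices have distinct colors.

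One slip in Step 2: in the left pattern of (\ref{reverse}), the edge after vertex $a$ gets $b$, the color of the vertex preceding $a$. The vertex two steps ahead has color $d$, and your ``two steps ahead'' rule is the reversed one.
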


\begin{proof} We consider only the DETGC case, as the RETGC is similarly treated.
From a PFC $\mu$ in $M(G)$, the vertex coloring to compose a DETGC $\mu'$ is obtained as follows.
If the edge $e$ separating two faces $B$ and $B'$ of $M(G)$ has its end-vertices assigned colors $i,j\in\{1,2,3\}$, then the transposition $(i\;j)$ relates $\mu(B)$ and $\mu(B')$, meaning that $(i\;j)$ takes 
$\mu(B)$ and $\mu(B')$ respectively onto $\mu(B')$ and $\mu(B)$.

Otherwise, one end-vertex of $e$ has assigned color 0. Say $i\in\{1,2,3\}$ is the color of the other end-vertex of $e$, in which case $\mu(B)$ and $\mu(B')$ are of the respective forms $ijk$ and $jki$ or vice versa and are related by a 3-rotation, namely either $(1\;2\;3)$ or $(3\;2\;1)$, taking $\mu(B)$ and $\mu(B')$ respectively onto $\mu(B')$ and $\mu(B)$. 

The resulting vertex coloring satisfies the efficient domination condition. In fact, consider three faces of respective belts $B_1,B_2,B_3$ having a common vertex $v$. We consider two cases.

\begin{enumerate}\item
If $\mu(\{B_1,B_2,B_3\})=\{123,231,312\}$, then $\mu(v)=0$. Then, by removing from $M(G)$ the closed neighborhoods of all such vertices $v$, we get a 2-factor $\mathcal{F}_0$ whose vertices have subsequent colors forming color cycles $(123\cdots 123\cdots 123)$ or their reverse. Then, distinct vertices $v=v_0$ having color 0 are at distance 1 from $\mathcal{F}_0$ via edges reaching $\mathcal{F}_0$ at different points. Thus, those vertices $v$ form an EDS.
\item If $\mu(\{B_1,B_2,B_3\})=\{abc,bac,acb\}$, where $b$ is the only member of $\{a,b,c\}$ occupying the first, second and third positions of the 3-permutations, then $\mu(v)=b$. Then, by removing from $M(G)$ the closed neighborhoods of all such vertices $v=v_b$, we get a 2-factor $\mathcal{F}_b$ whose vertices have subsequent colors forming color cycles $(0ca\cdots 0ca\cdots 0ca)$ or their reverse. Then, distinct vertices $v=v_b$ having color $b$ are at distance 1 from $\mathcal{F}_b$ via edges reaching $\mathcal{F}_b$ at different points. Thus, those vertices $v_b$ form an EDS, where $b$ takes the three values in $\{1,2,3\}$.
\end{enumerate}

Let $\psi$ be the assignment that sends the DETGCs $\mu'$ of $M(G)$ to their corresponding PFCs $\mu$ as defined above. Let $\mu_1,\mu_2$ be two different DETGCs
of $M(G)$ such that $\psi(\mu_1)=\psi(\mu_2)$. Given two belts $B,B'$ sharing an edge $e=(u,v)$ in $M(G)$, we have the following situations, where $i=1,2$:
\begin{enumerate}
\item $(\psi(\mu_i(B))=abc\mbox{ and }\psi(\mu_i(B'))=acb)\Rightarrow \mu_i(e)=(b\;c)\Rightarrow\mu'_i\{u,v\}=\{b,c\}$;
\item $(\psi(\mu_i(B))=abc\mbox{ and }\psi(\mu_i(B'))=bac)\Rightarrow \mu_i(e)=(a\;b)\Rightarrow\mu'_i\{u,v\}=\{a,b\}$;
\item $(\psi(\mu_i(B))=abc\mbox{ and }\psi(\mu_i(B'))=cab)\Rightarrow \mu_i(e)=(a\;c\;b)\Rightarrow\mu'_i\{u,v\}=\{0,c\}$;
\item $(\psi(\mu_i(B))=abc\mbox{ and }\psi(\mu_i(B'))=bca)\Rightarrow \mu_i(e)=(a\;b\;c)\Rightarrow\mu'_i\{u,v\}=\{0,a\}$.
\end{enumerate}
Subsequently, if $v\in V(G)$ is the common vertex of the belts $B_1,B_2,B_3$ and
\begin{enumerate}
\item $\mu'_i(\{B_1, B_2,B_3\})=\{abc,bca,cab\}$, then $\mu_i(v)=0$, where $i=1,2$;
\item $\mu'_i(\{B_1, B_2,B_3\})=\{abc,bac,bca\}$, then $\mu_i(v)=a$, where $i=1,2$.
\end{enumerate}
This proves that $\psi$ is injective. Now, the PFCs obtained from the DETGCs of $M(G))$ have the 3-permutations of any two adjacent belts related by a transposition or a rotation of $\{1,2,3\}$ which in turn allows to recover the vertex colors of $\mu'$. Thus, they form the image of the collection of DETGCs of $M(G)$, showing that $\psi$ is surjective over that image. 

Once the vertex colors of $G$ in $\mu'$ are recovered from $\mu$, Corollary~\ref{under} shows that there are just two mutually orthogonal instances to complete $\mu'$ over $E(G)$,
one corresponding to a DETGC and the other to an RETGC. In our case, we complete with the DETGC version.
\end{proof}


\begin{figure}[htp]
\includegraphics[scale=0.63]{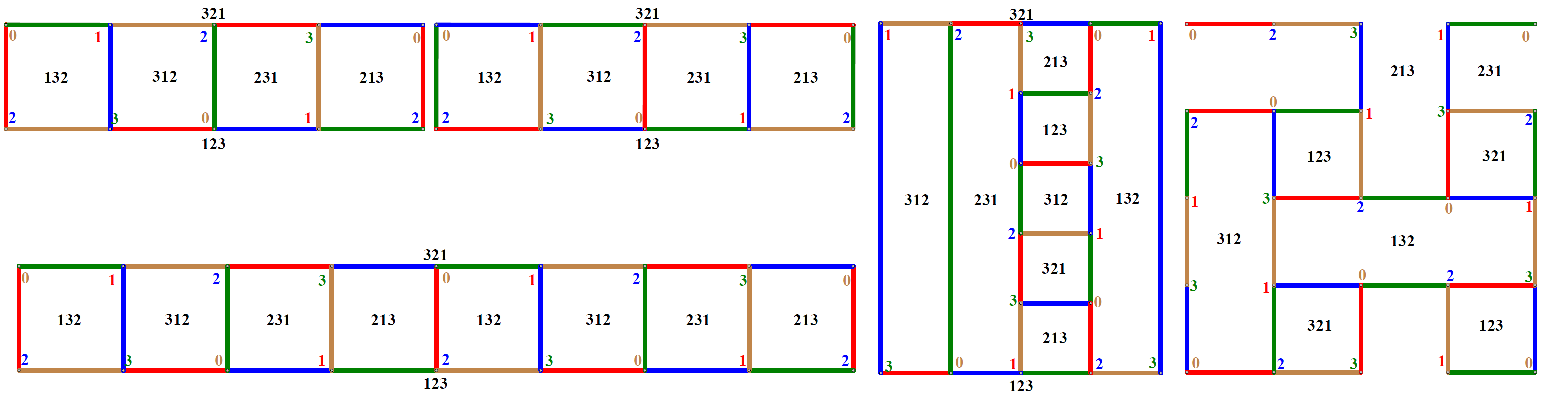}
\caption{ETGCs \& PFCs associated with (\ref{oct}) and Examples~\ref{repet}, \ref{ex3} and~\ref{truncated}. Colors as in (\ref{colors}).}
\label{semi}
\end{figure}

\begin{example}\label{ne}
In the upper-left of Figure~\ref{semi}, the two mutually orthogonal ETGCs in (\ref{oct}) are represented with a coloring $\mu(M(Q_3))$ from the six faces of the standard map $M(Q_3)$ of the 3-cube $Q_3$ onto the six 3-permutations 123, 132, 213, 231, 312 and 321, presented from left to right as 132, 312, 231 and 213 with top and bottom (horizontally presented) 4-belts as 321 and 123, respectively. Definition~\ref{pipi} yields that the left copy of $Q_3$ holds an RETGC, while the right copy of $Q_3$ holds an DETGC.  
\end{example}

\begin{remark}\label{lapsi}
In the context of Theorem~\ref{lapsus} and its proof, $E(G)$ has an {\it auxiliary coloring} with color set formed by the 3 transpositions and 2 rotations of $\{1,2,3\}$. This takes to an auxiliary coloring $\mu^*$ of the dual $\Pi$-map $M^*(G)$ of $M(G)$. This dual $\Pi$-map is a triangulation of $S$ that has as vertex set $V^*(M(G))$ the set $F(M(G))$ of faces of $M(G)$ and an edge $e^*=(B_1,B_2)\in E^*(M(G))$ for each edge $e\in E(G)$ separating faces of belts $B_1,B_2$ in $M(G)$. A vertex $v\in V(G)$ determines a triangular face $F^*(v)$ of $M^*(G)$ whose vertices $B_1^*,B_2^*,B_3^*$ are the faces of the respective belts $B_1,B_2,B_3$ containing $v$. The sides of such $F^*(v)$ are the edges $e_1^*,e_2^*,e_3^*$ dual to the corresponding edges $e_1,e_2,e_3$ that are incident to $v$. The degree of a vertex $B^*$ of $M^*(G)$ is a multiple of 4, namely the length $\ell$ of the corresponding belt $B$ of a face of $M(G)$. Now, $\mu^*$ assigns the nonzero 3-permutations to the members of its vertex set $V^*(M(G))=F(M(G))$ and also to the dual edges $e^*$ of the edges $e\in E(G)$, as conceived above for the faces and edges of $M(G)$, where in the case of edges, they act as transpositions and rotations of $\{1,2,3\}$. Each face $v^*$ of $M^*(G)$, corresponding to a vertex $v$ of $G$, gets a color in $[3]_0$ from its surrounding coloring information as follows. If $bac, abc, acb$ (resp. $abc,bca,cab$) are the respective 3-permutations assigned by $\mu^*$ to the vertices $B^*_1,B^*_2,B^*_3$ of $v^*$, where $\{a,b,c\}=\{1,2,3\}$, then $\mu^*_G(v^*)=b$ (resp.  $\mu^*_G(v^*)=0$.
More specifically, the 3-permutations of the two triples here can be completed into auxiliary color cycles $(bac\;(a\;b)\;abc\;(b\;c)\;acb\;(a\;c\;b))$, (resp. $(abc\;(a\;b\;c)\;bca\;(b\;c\;a)\;cab\;(c\;a\;b))$. In particular, notice that if $G$ is not 3-edge connected, then $M^*(G)$ has pairs of parallel edges than in $S$ have corresponding non-contractible circles as their topological images in case the surface $S$ has genus larger than 0.
\end{remark}

\begin{example}\label{trunc0}
The {\it truncated square tiling} \cite{Branko} of the Euclidean plane restricts to the bicutout $X$ of the $\Pi$-map $M(G)$ of the toroidal vertex-transitive 16-vertex cubic graph $G$ of girth 4 shown on the left of  (\ref{tess}) with an  RETGC indicated, where the truncated squares appear as 8-belts. Moreover,
the right of Figure~\ref{semi}
contains a bicutout representing this $M(G)$ and showing a PFC equivalent to the said RETGC.
The dual coloring $\mu^*$ of such PFC on the dual $\Pi$-map $M^*(G)$ of $M(G)$ is represented as a colored graph on the left of Figure~\ref{ahiva}, where the dual vertices are indicated as circles containing the six corresponding 3-permutations. Each edge of $M^*(G)$ have the same color of its dual edge in $M(G)$. Each 3-colored triangle is the dual of a corresponding vertex of $G$ whose RETGC color is the missing color in $[3]_0$ from the colors of its 3 shown edges. A bicutout realization of $M^*(G)$ can be traced over the disposition on the left of Figure~\ref{ahiva} by tracing a dual vertex on each face and traversing separating edges of pairs of faces by lines representing the dual edges.
\end{example}

\section{Extensions}\label{extensions}


 \begin{definition}\label{pe} An {\it extension} of a cutout $X$ of a planar or toroidal $\Pi$-map of a finite simple cubic graph $G$ of girth 4 is a cutout or bicutout formed by the concatenation, or union in parallel, of copies $X_1,X_2,\ldots,X_n$ of $X$ ($1<n\in\mathbb{Z}$) that are cutouts of corresponding copies $M(G_1),M(G_2),\ldots,M(G_n)$ of $M(G)$, stacked horizontally (resp. vertically) along the common linear vertical (resp. horizontal) sides between each $X_i$ and subsequent $X_{i+1}$ ($1\le i<n$), namely by identifying the right (resp. top) side $L$ of $X_i$ with the left (resp. bottom) side $L'$ of $X_{i+1}$, where vertices and edges of $G_i$ in $L$ are glued with the corresponding vertices and edges of $G_{i+1}$ in $L'$. 
 \end{definition}

\begin{theorem}\label{predni}
The successive identification of a finite number of copies of xcutouts (resp. ycutouts) or bicutouts of planar or toroidal $\Pi$-maps as in Definition~\ref{pe} yields a cutout or bicutout $\bar{X}$ of a planar or toroidal $\Pi$-map $M(\bar{G})$ of a connected simple cubic graph $\bar{G}$ of girth 4.
\end{theorem}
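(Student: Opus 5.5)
The plan is to verify, one at a time, the properties that make $\bar{X}$ a cutout or bicutout of a $\Pi$-map. These are: cubicity and simplicity of $\bar{G}$, girth 4, connectivity, the genus of the resulting surface, the Y-map condition, and the $\Pi$-map condition. All of them should follow from local properties of $X$ that are preserved by gluing. I treat horizontal stacking of $n$ copies $X_1,\ldots,X_n$; vertical stacking is symmetric, and a bicutout extension is two successive stackings.

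\textbf{Local structure.} Gluing the right side $L$ of $X_i$ to the left side $L'$ of $X_{i+1}$ matches the boundary points exactly as the identification $L\equiv L'$ matches them inside $X$. Hence every vertex of $\bar{G}$ on a seam has the same three incident edge-germs, in the same rotation, as the corresponding vertex of $G$. So $\bar{G}$ is cubic, and the rotation system of $\bar{G}$ is the pull-back of that of $M(G)$ under the natural covering $\bar{G}\to G$.

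\textbf{The surface and its genus.} The final identification of the free right side of $X_n$ with the left side of $X_1$, together with the top/bottom identifications in each copy, gives a closed surface. This surface is an $n$-fold cyclic cover of the torus (or of the cylinder/sphere in the planar xcutout/ycutout case), so it has $\chi=n\chi=0$ in the toroidal case and is again a torus. In the planar case one argues directly that the rectangle with one pair of sides identified and stacked copies stays planar.

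\textbf{Faces and the remaining properties.} Faces lift to faces. A belt of $M(G)$ is either contained in the interior of $X$, or it crosses the seams; in the latter case it lifts to a belt whose length is a multiple of the original length. The Y-condition of Definition~\ref{25} is needed exactly here, as Remark~\ref{ndrangheta} points out: it guarantees that faces split by the cutout border close up again after identification, so opposite Y-edges stay opposite. Consequently $\bar{M}$ is a Y-map whose belt lengths are $\equiv 0 \bmod 4$. Girth 4 is kept, because a 4-cycle of $G$ lifts to 4-cycles, and any cycle of $\bar{G}$ projects to a closed walk in $G$, so no 3-cycle or 2-cycle can occur; simplicity is argued the same way for $n\ge 2$. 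Connectivity follows because each $X_i$ is connected and consecutive copies share the seam vertices. For the $\Pi$-condition of Definition~\ref{bicudo}, I pull back the periodic vertex coloring: each $\bar{v}$ gets the color of its image in $G$. Every belt of $\bar{M}$ then reads $(a,b,c,d)^{\ell/4}$, and the EDS property is local, so it lifts.

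\textbf{Main obstacle.} The delicate point is that a belt crossing the seam may wrap several times before closing, so I must check that its color cycle really stays periodic with period 4 after lifting. This is automatic for a covering, since the lifted color cycle is a concatenation of copies of the original one. A second difficulty is the degenerate planar case, where the genus claim has to be argued directly rather than via Euler characteristic, and where loops or multi-edges could arise at the seam when $x_0$ is small. I would handle this by noting that the hypothesis that $X$ already represents a simple graph rules out such collisions.
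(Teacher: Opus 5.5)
Your argument is correct in substance, but it takes a broader route than the paper. The paper's proof is only Euler-characteristic bookkeeping. For a planar xcutout it notes that the two boundary faces (whose belts lie on the top and bottom sides of $X$) are shared by all copies, while every other face is replicated, giving $\chi(M(\bar G))=2+\sum_{i=1}^n(\chi(M(G_i))-2)=2$. In the toroidal case it gets $\chi(M(\bar G))=\sum_i\chi(M(G_i))=0$. Cubicity, simplicity, girth, connectivity and the Y- and $\Pi$-properties are taken for granted.

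You instead view the stacking as an $n$-fold cyclic covering $\bar G\to G$ that respects rotations. In the toroidal case this gives the same multiplicativity of $\chi$. In the planar case you replace the face count by the observation that a cyclic cover of an annulus is an annulus. The real gain is that unique lifting makes several properties automatic: cubicity, simplicity, absence of triangles, the EDS property of the pulled-back coloring, and the $(a,b,c,d)^{\ell/4}$ belt pattern. The paper leaves all of these implicit. Its explicit face count, in turn, shows exactly which faces change, and your write-up gets that part slightly wrong.

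Some steps need repair.

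\textbf{Connectivity.} The part of $G$ drawn in one rectangle need not be connected before its side identification, so ``each $X_i$ is connected'' is not available. Argue instead that $G$ is connected and has a closed walk with net winding $1$ across the seam, so its cyclic lift is connected. In the planar case the top belt is such a walk. In the toroidal case one exists because all faces are disks, so $\pi_1(G)\to\pi_1$ of the torus is onto.

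\textbf{Face lifts.} Every face is a disk, so its boundary walk is null-homotopic and lifts to $n$ disjoint belts of the \emph{same} length. Crossing a seam is irrelevant; nothing wraps. Only the two boundary belts of a planar xcutout or ycutout become single belts of length $n\ell$, so your ``main obstacle'' concerns only those two.

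\textbf{Girth and Y-map.} Only contractible $4$-cycles are guaranteed to lift to $4$-cycles; a $4$-cycle winding in the stacking direction lifts to a longer cycle. Girth $4$ still survives, because $4$-belts exist: with all belt lengths divisible by $4$, the Euler count forces some. For the Y-map claim, however, you must also rule out an O-vertex whose only $4$-cycles wind in this way. Such a vertex would become a Y-vertex, and the Y-belt condition could then fail. Your appeal to Remark~\ref{ndrangheta} does not address this, and neither does the paper.

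\textbf{Non-planarity.} As in the paper, you only exhibit an embedding with $\chi=0$. You do not show that $\bar G$ is non-planar, which is needed for $M(\bar G)$ to be a genus-realizing map.
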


\begin{proof} Assume that $X$ is an xcutout. (The case of ycutout can be similarly tackled).
If $M(G)$ is planar, then the top and bottom faces of $M(G)$ are left out of $X$ as their belts are in fact embedded in the top and bottom sides of $X$, respectively. The other $|F(M(G))-2$ faces of $M(G)$ are reproduced in the horizontal concatenation of the copies $X_1,X_2,\ldots,X_n$ of $X$ ($1<n\in\mathbb{Z}$). Then,
$\chi(M(\bar{G}))=2+\sum_{i=1}^n(|V(G_i)|-|E(G_i)|+|F(M(G_i))|-2)=2+0=2$, so $g(\bar{G})=0$ and $\bar{G}$ is planar. If $G$ is toroidal, then $\chi(M(G))=0$ implies that 
$\chi(M(\bar{G}))=\sum_{i=1}^n(|V(G_i)|-|E(G_i)|+|F(M(G_i))|)=0$, so $g(\bar{G})=1$ and $\bar{G}$ is toroidal.
\end{proof}

\begin{question}\label{mapz}
Can the extensions of planar and toroidal cutouts and bicutouts as in Definition~\ref {pe} be generalized to $n$-cutouts based on $2n$-zonogons, wnere $n>1$? How? Can Theorem~\ref{predni} be extended by way of one such generalization? 
\end{question}

\begin{example}\label{repet}
If we glue successively a finite number of copies of, say, the leftmost $(4\times 1)$ bicutout in (\ref{oct}) and identify in parallel by an extension the leftmost and rightmost vertical edges of the resulting graph, a prism $C_{4j}\square K_2$ and an ETGC in it are obtained, as shown to the right of indication $(^{\times 2}_\rightarrow)$
in (\ref{oct2}), for $j=2$.

\begin{eqnarray}\label{oct2}\begin{array}{lll}
0\hspace*{2.2mm} _-^3\hspace*{2.2mm} 1\hspace*{2.2mm} _-^0\hspace*{2.2mm} 2\hspace*{2.2mm} _-^1\hspace*{2.2mm} 3\hspace*{2.2mm} _-^2\hspace*{2.2mm} 0 &&
0\hspace*{2.2mm} _-^3\hspace*{2.2mm} 1\hspace*{2.2mm} _-^0\hspace*{2.2mm} 2\hspace*{2.2mm} _-^1\hspace*{2.2mm} 3\hspace*{2.2mm} _-^2\hspace*{2.2mm}  0\hspace*{2.2mm} _-^3\hspace*{2.2mm} 1\hspace*{2.2mm} _-^0\hspace*{2.2mm} 2\hspace*{2.2mm} _-^1\hspace*{2.2mm} 3\hspace*{2.2mm} _-^2\hspace*{2.2mm} 0 \\
\!_1|\hspace*{6mm}_2|\hspace*{6mm}_3|\hspace*{6mm}_0|\hspace*{6mm}_1|&(_\rightarrow^{\times 2})&
\!_1|\hspace*{6mm}_2|\hspace*{6mm}_3|\hspace*{6mm}_0|\hspace*{6mm}_1|\hspace*{6mm}_2|\hspace*{6mm}_3|\hspace*{6mm}_0|\hspace*{6mm}_1|\\
2\hspace*{2.2mm} _0^-\hspace*{2.2mm}3\hspace*{2.2mm} _1^-\hspace*{2.2mm} 0\hspace*{2.2mm} _2^-\hspace*{2.2mm} 1\hspace*{2.2mm} _3^-\hspace*{2.2mm} 2 &&
2\hspace*{2.2mm} ^-_0\hspace*{2.2mm}3\hspace*{2.2mm} ^-_1\hspace*{2.2mm} 0\hspace*{2.2mm} ^-_2\hspace*{2.2mm} 1\hspace*{2.2mm} ^-_3\hspace*{2.2mm} 2\hspace*{2.2mm} ^-_0\hspace*{2.2mm}3\hspace*{2.2mm} ^-_1\hspace*{2.2mm} 0\hspace*{2.2mm} ^-_2\hspace*{2.2mm} 1\hspace*{2.2mm} ^-_3\hspace*{2.2mm} 2 \\
\end{array}\end{eqnarray}

In the lower-left of Figure~\ref{semi}, a PFC in an $(8\times 1)$ xcutout of the prism $C_8\square K_2$ following the pattern of the top left half of the figure, with top and bottom 8-belts colored with 321 and 123, respectively corresponds with the RETGC on the right of  (\ref{oct2}).
\end{example}

\section{Unfoldings}

\begin{figure}[htp]
\includegraphics[scale=0.57]{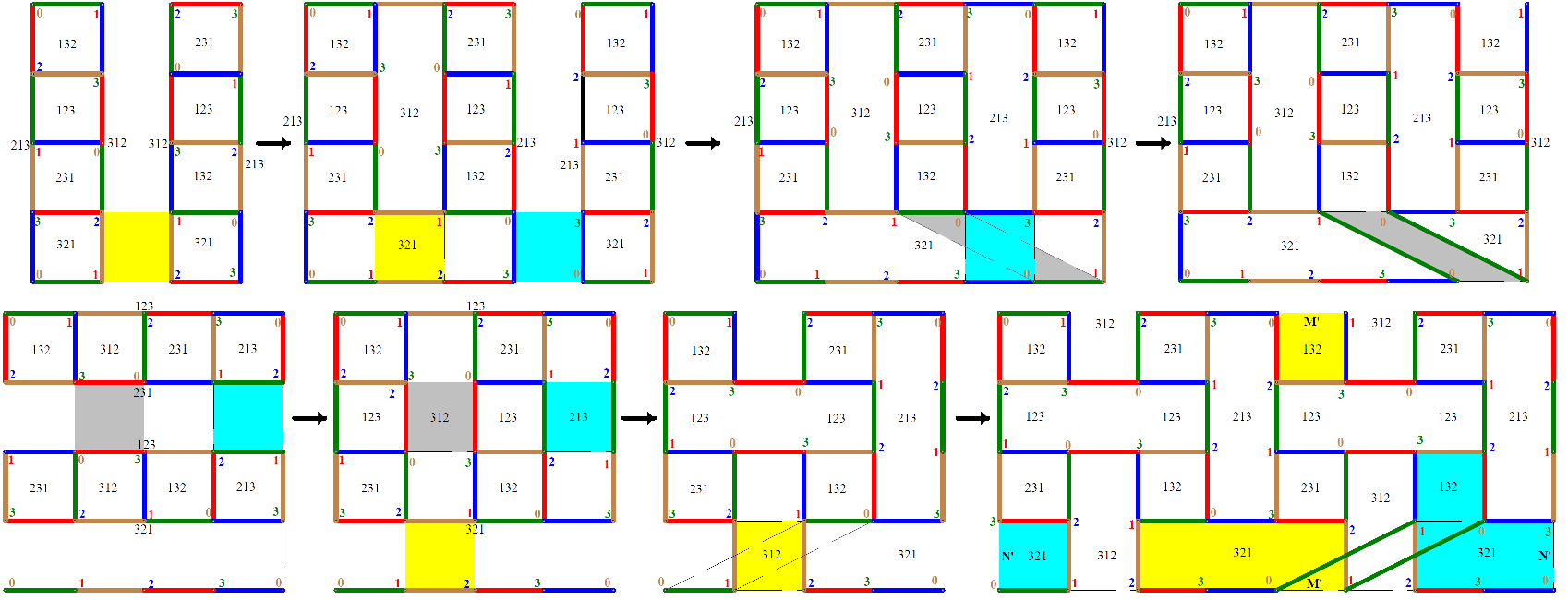}
\caption{The two sequences of exchanges in Example~\ref{tora}. Colors as in (\ref{colors}).}
\label{noera}
\end{figure}

\begin{definition}\label{unfold}
Given a cutout $\Phi$ of a $\Pi$-map $M(G)$ of a connected simple planar cubic graph $G$ of girth 4, an ({\it accordion}) {\it unfolding} of $\Phi$ is a cutout $\Phi'$ of a $\Pi$-map $M(G')$ of a connected planar cubic graph $G'$ obtained by the replacement of 4-belts of the form $K_2\square K_2=P_2\square P_2$ by copies of $K_2\square P_{2\ell}=P_2\square P_{2\ell}$, where $1<\ell\in\mathbb{Z}$ and $P_{2\ell}$ is a path of length $2\ell-1$.
\end{definition}

\begin{theorem}\label{starting}\cite[Theorem 16]{+1} Starting with the 4-belts of a cutout of a $\Pi$-map $M(G)$, where $G$ is a connected simple cubic graph of girth 4, successive unfoldings lead to an infinite family of planar maps $M(G')$ of connected cubic graphs $G'$ of girth 4. Each such $G'$ lacking $\ell$-belts, $\forall\ell\not\equiv 0 \mod 4$, admits ETGCs. Moreover, there are EGCs in the prisms $G'\square K_2$. 
\end{theorem}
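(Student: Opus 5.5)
The plan is to prove Theorem~\ref{starting} by induction on the number of unfolding steps, carrying an explicit ETGC along and extending it locally each time. The base case is a cutout $\Phi$ of a $\Pi$-map $M(G)$ with an ETGC $\mu'$. When $G$ has only $\ell$-belts with $\ell\equiv 0\mod 4$, the ETGC comes from the spray of Theorem~\ref{tt} with $x=d$, or directly from the $\Pi$-map periodic vertex coloring of Definition~\ref{bicudo} together with the DETGC/RETGC completion of the Lemma on $\mu_V^D,\mu_V^R$.

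For the inductive step, fix a 4-belt $K_2\square K_2$ of the current map. Its neighborhood is colored as in one of the two patterns of (\ref{vac}). Cut the map along the two ``vertical'' edges of that 4-belt, i.e. the edges $c\,_\rightarrow^a\,d$ and its opposite. Insert $2\ell-2$ new rungs, so that $K_2\square K_2$ becomes $K_2\square P_{2\ell}$. We only need $\ell$ even, since belts must stay of length $\equiv 0 \mod 4$. Colors on the inserted ladder are assigned by continuing the O-arc pattern (\ref{reverse}) with $x=d$ periodically along both rails, exactly as in the extension of Example~\ref{repet}, where a $(4\times1)$ block is repeated.

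The key checks come next. The two long belts adjacent to the unfolded 4-belt gain $2\ell-2$ vertices each; choosing $\ell$ even makes this a multiple of 4, which keeps them $\equiv 0\mod 4$. Each new 4-belt of the ladder is colored as in (\ref{vac}), so the VEGC condition holds on every girth cycle. The vertex color sequences on the rails remain periodic of the form (\ref{sche}), so the map stays a $\Pi$-map and the 4-periodicity lets the original colors resume unchanged at the far end. Conditions (a)--(b) of Definition~\ref{ahora} are local: every closed neighborhood either lies in the untouched part, or lies in the ladder, where it is a translate of a neighborhood in the original 4-belt. So the color classes remain EDSs. Planarity is preserved because we insert a planar strip into a face-boundary region, and Euler's formula counts $+4(\ell-1)$ vertices, $+6(\ell-1)$ edges and $+2(\ell-1)$ faces, net zero. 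Girth stays 4, and iterating on any 4-belt, including new ones, yields infinitely many $G'$.

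The EGC on $G'\square K_2$ then follows from Corollary~\ref{under}. The resulting ETGC is periodic, so there are mutually orthogonal DETGC and RETGC completions over the same vertex coloring, and they glue to an EGC on the prism. The main obstacle I expect is the boundary matching: when $\ell$ is odd, or when the unfolded 4-belt's rungs carry colors that do not realign after the insertion, the pattern is shifted by 2. In that case I would first try applying the insertion in multiples of 4 vertices per rail and argue that exactly these unfoldings are the ones producing maps lacking $\ell$-belts with $\ell\not\equiv 0\mod 4$. That is the hypothesis in the statement, and it is what reduces the obstacle to the even case.
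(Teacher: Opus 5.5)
Your overall strategy is the natural one: extend a periodic ETGC across each inserted ladder by continuing the 4-periodic rail pattern, then invoke Corollary~\ref{under} for the prism. The paper itself only points to \cite[Theorem~16]{+1}. However, the parity your inductive step rests on is backwards, so the step fails as written.

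$P_{2\ell}$ has $2\ell$ vertices, so each rail gains $2\ell-2=2(\ell-1)$ vertices. This is a multiple of 4 exactly when $\ell$ is \emph{odd}. For even $\ell$ (e.g.\ $\ell=2$, the ladder $P_4\square K_2$) two things go wrong. First, the belts along the rails (when distinct) end with lengths $\equiv 2 \pmod 4$. Second, the ladder colors cannot reattach. Write $u_1,\dots,u_{2\ell}$ and $w_1,\dots,w_{2\ell}$ for the rails. Rainbow closed neighborhoods force $u_{i+1}=w_{i-1}$ and $w_{i+1}=u_{i-1}$ along the ladder. Hence the last pair $(u_{2\ell},w_{2\ell})$ equals $(u_2,w_2)$ if $\ell$ is odd, but the swapped $(w_2,u_2)$ if $\ell$ is even.

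So ``choosing $\ell$ even makes this a multiple of 4'' and ``when $\ell$ is odd \dots\ the pattern is shifted by 2'' are both false. Your proposed remedy of inserting multiples of 4 vertices per rail is precisely the odd case, not the even one; the paper's own instance in Example~\ref{ex3} uses $P_6\square K_2$, i.e.\ $\ell=3$. With $\ell$ odd your construction does work. For example, take the 4-belt on the left of (\ref{vac}) with rails $b\,c$ and $a\,d$, and $\ell=3$. The rails become $b,c,a,d,b,c$ and $a,d,b,c,a,d$, with rung colors $d,b,c,a,d,b$. The last rung and the outer edges keep their old colors, and every new closed neighborhood and 4-cycle is rainbow.

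A second problem is your closing claim that the steps preserving the mod-4 condition are \emph{exactly} the ones yielding admissible $G'$. This is false for successive unfoldings with arbitrary $1<\ell$ as allowed in Definition~\ref{unfold}. Two $\ell=2$ unfoldings of side faces of $C_{4j}\square K_2$ pass through $C_{4j+2}\square K_2$. That map has no ETGC, by Theorem~\ref{fo} or by the forcing argument above. Yet the sequence ends at $C_{4j+4}\square K_2$, which satisfies the hypothesis. A step-by-step induction carrying an ETGC cannot reach such $G'$. You must either read Definition~\ref{unfold} as requiring each step to produce a $\Pi$-map, which forces odd $\ell$ whenever the two rails border distinct belts, or argue on the final map directly.
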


\begin{proof} The proof of the statement is similar to that of~\cite[Theorem 16]{+1}.
\end{proof}   

\begin{figure}[htp]
\includegraphics[scale=0.585]{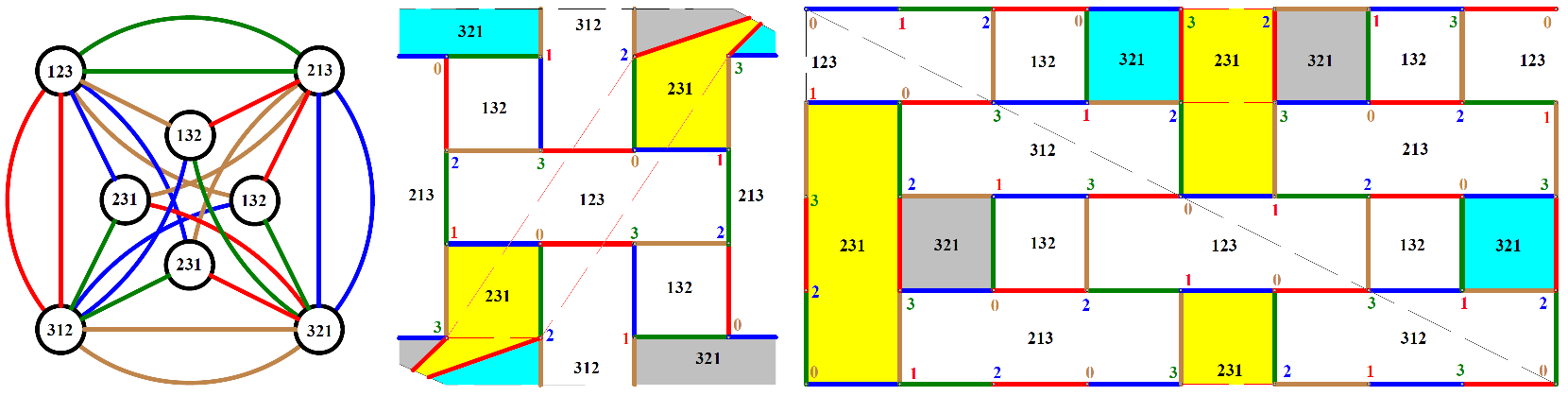}
\caption{A dual graph and exchanges not raising genus from 1 to 2. Colors as in (\ref{colors}).}
\label{ahiva}
\end{figure}

\begin{example}\label{ex3}
The leftmost $(4\times 1)$ xcutout in (\ref{oct3}) can be modified by replacing the 4-belt $H$ whose vertical edges are replaced by colons, by the transpose $(\cdots)^t$ of the copy of $P_6\square K_2$ to the right of union symbol $\cup$, with its leftmost and rightmost edges identified respectively to the corresponding horizontal edges of $H$.  On the center-right of Figure~\ref{semi}, a face-colored representation of an xcutout of the resulting cubic $\Pi$-map is given, with a 
DETGC and corresponding
PFC bearing un unfolding over the PFC $\mu(M(Q_3))$ of Example~\ref{ne}.
\begin{eqnarray}\label{oct3}\begin{array}{lllll}
1\hspace*{2.2mm} _-^0\hspace*{2.2mm} 2\hspace*{2.2mm} _-^1\hspace*{2.2mm} 3\hspace*{2.2mm} _-^2\hspace*{2.2mm}  0\hspace*{2.2mm} _-^3\hspace*{2.2mm} 1&&
3\hspace*{2.2mm} _-^0\hspace*{2.2mm} 1\hspace*{2.2mm} _-^2\hspace*{2.2mm} 0\hspace*{2.2mm} _-^3\hspace*{2.2mm} 2\hspace*{2.2mm} _-^1\hspace*{2.2mm}  3\hspace*{2.2mm} _-^0\hspace*{2.2mm} 1&\\
\!_2|\hspace*{6mm}_3|\hspace*{5mm}_0\!:\hspace*{5mm}_1\!:\hspace*{5mm}_2|&\cup\;(&
\!_2|\hspace*{6mm}_3|\hspace*{6mm}_1|\hspace*{6mm}_0|\hspace*{6mm}_2|\hspace*{6mm}_3|&)^t\\
3\hspace*{2.2mm} ^-_1\hspace*{2.2mm} 0\hspace*{2.2mm} ^-_2\hspace*{2.2mm} 1\hspace*{2.2mm} ^-_3\hspace*{2.2mm} 2\hspace*{2.2mm} ^-_0\hspace*{2.2mm}3&&
0\hspace*{2.2mm} ^-_1\hspace*{2.2mm}2\hspace*{2.2mm} ^-_0\hspace*{2.2mm} 3\hspace*{2.2mm} ^-_2\hspace*{2.2mm} 1\hspace*{2.2mm} ^-_3\hspace*{2.2mm} 0\hspace*{2.2mm} ^-_1\hspace*{2.2mm}2&\\
\end{array}\end{eqnarray}
Notice that this $\Pi$-map is equivalent to that of Example~\ref{repet}.
\end{example}

\begin{example}\label{truncated} In continuation to Example~\ref{trunc0}, we have that
Definition~\ref{unfold} is exemplified in (\ref{tess}) by the (rightmost) unfolding $X'$ of the (leftmost) bicutout $X$. 
\begin{eqnarray}\label{tess}\begin{array}{ll|ll}
0\hspace*{2.2mm}_-^1\hspace*{2.2mm}2\hspace*{2.2mm}_-^0\hspace*{2.2mm}3\hspace*{2.2mm}..\hspace*{2.2mm}1\hspace*{2.2mm}_-^3\hspace*{2.2mm}0&&&
0\hspace*{2.2mm}_-^1\hspace*{2.2mm}2\hspace*{2.2mm}_-^0\hspace*{2.2mm}3\hspace*{2.2mm}_-^2\hspace*{2.2mm}1\hspace*{2.2mm}_-^3\hspace*{2.2mm}
0\hspace*{2.2mm}..\hspace*{2.2mm}2\hspace*{2.2mm}_-^0\hspace*{2.2mm}3\hspace*{2.2mm}_-^2\hspace*{2.2mm}1\hspace*{2.2mm}_-^3\hspace*{2.2mm}0\\
:\hspace*{14.5mm}_2|\hspace*{6mm}_2|\hspace*{7mm}:&&&
:\hspace*{32mm}_1|\hspace*{6mm}_1|\hspace*{25mm}:\\
2\hspace*{2.2mm}_-^1\hspace*{2.2mm}0\hspace*{2.2mm}_-^3\hspace*{2.2mm}1\hspace*{7mm}3\hspace*{2.2mm}_-^0\hspace*{2.2mm}2&&&
2\hspace*{2.2mm}_-^1\hspace*{2.2mm}0\hspace*{2.2mm}_-^3\hspace*{2.2mm}1\hspace*{2.2mm}_-^2\hspace*{2.2mm}3\hspace*{2.2mm}_-^0\hspace*{2.2mm}
2\hspace*{7mm}0\hspace*{2.2mm}_-^3\hspace*{2.2mm}1\hspace*{2.2mm}_-^2\hspace*{2.2mm}3\hspace*{2.2mm}_-^0\hspace*{2.2mm}2\\
\!_3|\hspace*{6mm}_2|\hspace*{6mm}_0|\hspace*{6mm}_1|\hspace*{6mm}_3|&&&
\!_3|\hspace*{6mm}_2|\hspace*{6mm}_0|\hspace*{6mm}_1|\hspace*{7mm}
\!_3|\hspace*{6mm}_2|\hspace*{6mm}_0|\hspace*{6mm}_1|\hspace*{6mm}_3|\\
1\hspace*{7mm}3\hspace*{2.2mm}_-^1\hspace*{2.2mm}2\hspace*{2.2mm}_-^3\hspace*{2.2mm}0\hspace*{2.2mm}_-^2\hspace*{2.2mm}1&&&
1\hspace*{7mm}3\hspace*{2.2mm}_-^1\hspace*{2.2mm}2\hspace*{2.2mm}_-^3\hspace*{2.2mm}0\hspace*{2.2mm}_-^2\hspace*{2.2mm}
1\hspace*{2.2mm}_-^0\hspace*{2.2mm}3\hspace*{2.2mm}_-^1\hspace*{2.2mm}2\hspace*{2.2mm}_-^3\hspace*{2.2mm}0\hspace*{2.2mm}_-^2\hspace*{2.2mm}1\\
\!_0|\hspace*{6mm}_0|\hspace*{23.5mm}_0|&&&
\!_0|\hspace*{6mm}_0|\hspace*{60mm}_0|\\
3\hspace*{7mm}1\hspace*{2.2mm}_-^2\hspace*{2.2mm}0\hspace*{2.2mm}_-^3\hspace*{2.2mm}2\hspace*{2.2mm}_-^1\hspace*{2.2mm}3&&&
3\hspace*{7mm}1\hspace*{2.2mm}_-^2\hspace*{2.2mm}0\hspace*{2.2mm}_-^3\hspace*{2.2mm}2\hspace*{2.2mm}_-^1\hspace*{2.2mm}
3\hspace*{2.2mm}_-^0\hspace*{2.2mm}1\hspace*{2.2mm}_-^2\hspace*{2.2mm}0\hspace*{2.2mm}_-^3\hspace*{2.2mm}2\hspace*{2.2mm}_-^1\hspace*{2.2mm}3\\
\!_2|\hspace*{6mm}_3|\hspace*{6mm}_1|\hspace*{6mm}_0|\hspace*{6mm}_2|&&&
\!_2|\hspace*{6mm}_3|\hspace*{6mm}_1|\hspace*{6mm}_0|\hspace*{7mm}
\!_2|\hspace*{6mm}_3|\hspace*{6mm}_1|\hspace*{6mm}_0|\hspace*{6mm}_2|\\
0\hspace*{2.2mm} _-^1\hspace*{2.2mm}2\hspace*{2.2mm}_-^0\hspace*{2.2mm}3\hspace*{2.2mm}..\hspace*{2mm}1\hspace*{2.2mm}_-^3\hspace*{2.2mm}0&&&
0\hspace*{2.2mm} _-^1\hspace*{2.2mm}2\hspace*{2.2mm}_-^0\hspace*{2.2mm}3\hspace*{2.2mm}_-^2\hspace*{2mm}1\hspace*{2.2mm}_-^3\hspace*{2.2mm}
0\hspace*{2.5mm} ..\hspace*{2.5mm}2\hspace*{2.2mm}_-^0\hspace*{2.2mm}3\hspace*{2.2mm}_-^2\hspace*{2mm}1\hspace*{2.2mm}_-^3\hspace*{2.2mm}0\\
\end{array}\end{eqnarray}
\end{example}

\section{Exchanges}

\begin{figure}[htp]
\includegraphics[scale=0.58]{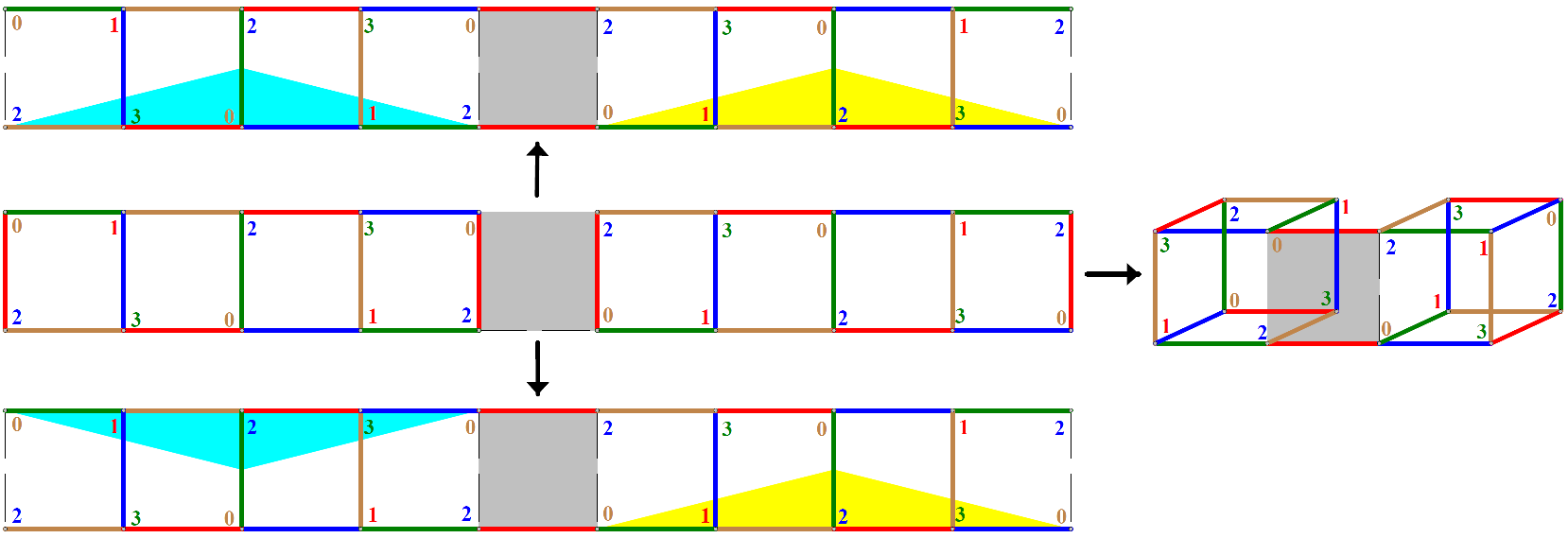}
\caption{Example~\ref{tor} of an exchange in the union of two 3-cubes. Colors as in (\ref{colors}).}
\label{torcuato}
\end{figure}

\begin{definition}\label{exchange} Let $G$ be a simple cubic graph of girth 4. Let $M(G)$ be a $\Pi$-map of $G$ and let $\mu:V(G)\cup E(G)\rightarrow[3]_0$
be either a DETGC or a RETGC. Let $C=(v_0,v_1,v_2,v_3)$ be a non-self-intersecting 4-cycle of $K_{|V(G)|}$ with opposite-edge pairs 
\begin{eqnarray}\label{c0c1}C_0=\{v_0v_1,v_2v_3\}\subset E(G)\;\;\mbox{ and }\;\;C_1=\{v_1v_2,v_3v_0\}\cap E(G)=\emptyset\end{eqnarray} such that
$\mu(v_0)=\mu(v_2)\neq \mu(v_1)=\mu(v_3)$ and $\mu(v_0v_1)=\mu(v_2v_3)$. 
If the graph $G'=G-C_0+C_1$ has girth 4 but no loops, parallel edges or triangles, then $G'$ is said to be obtained by {\it exchange} from $G$.
\end{definition}

\begin{theorem} Let $G'$ be obtained by exchange from $G$ as in the setting of Definition~\ref{exchange}.
Say that the edge $v_0v_1$ separates the faces of belts $B_0,B_1$ of $M(G)$ while the edge $v_2v_3$ separates the faces of belts $B_2,B_3$ of $M(G)$. Assume that $B_0,B_1,B_2,B_3$ are belts of four different faces of $M(G)$.
Assume that $B_i$ and $B_{i+2}$ are concatenations of a common O-arc $(a\,_-^b\,c\,_-^a\,d\,_-^c\,b\,_-^d)$, whose structure is exemplified in (\ref{sche}), for each one of  $i=0,1$. Then, $G'$ contains, instead of the belts $B_i$ ($i=0,1,2,3$) that were destroyed in passing from $G$ to $G'$, the two new cycles $$B_0\cup B_2\cup\{v_1v_2,v_0v_3\}\setminus\{v_0v_1,v_2v_3\}\;\;\mbox{ and }\;\;B_1\cup B_3\cup\{v_1v_2,v_0v_3\}\setminus\{v_0v_1,v_2v_3\}$$ interpretable as respective belts $B'_0,B'_1$ of a $\Pi$-map $M(G')$ on a resulting surface $S'$ whose genus $g(S')$ is one more than the genus $g(S)$ of $S$, namely $g(S')=g(S)+1$. 
\end{theorem}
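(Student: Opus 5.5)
The plan is to build the new rotation system for $M(G')$ explicitly, trace its face boundaries, and then compute the genus of $S'$ from the Euler characteristic, as in the proof of Theorem~\ref{predni}. Vertex and edge counts are unchanged: $|V(G')|=|V(G)|$ and $|E(G')|=|E(G)|$, since two edges of $C_0$ are removed and two edges of $C_1$ are added. So everything hinges on the face count. If the four faces $B_0,B_1,B_2,B_3$ are replaced by exactly two faces $B'_0,B'_1$ and all other faces survive, then $|F(M(G'))|=|F(M(G))|-2$. Hence $\chi(M(G'))=\chi(M(G))-2$, which gives $2-2g(S')=2-2g(S)-2$, that is, $g(S')=g(S)+1$.

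First I would fix the rotations. At each $v_i$, the removed edge of $C_0$ is replaced by the new edge of $C_1$ in the same slot of the cyclic order $p_{v_i}$: $v_0v_1$ gives way to $v_0v_3$ at $v_0$ and to $v_1v_2$ at $v_1$, and similarly at $v_2,v_3$. All other rotations are left as in $M(G)$.

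Next I would trace faces by the usual face-tracing rule. Any face of $M(G)$ that does not use $v_0v_1$ or $v_2v_3$ is traced identically, so it persists. This is where the hypothesis that $B_0,B_1,B_2,B_3$ are four distinct faces enters.

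The face $B_0$, read as a path from $v_1$ around to $v_0$ after deleting $v_0v_1$, now reaches $v_0$ and continues along $v_0v_3$ into $B_2$ or $B_3$. I must choose the orientation conventions, namely which side of $v_2v_3$ is $B_2$, so that it enters $B_2$. It then traverses $B_2$ minus $v_2v_3$ from $v_3$ to $v_2$ and returns via $v_2v_1$. This closes the single face $B'_0=B_0\cup B_2\cup\{v_1v_2,v_0v_3\}\setminus\{v_0v_1,v_2v_3\}$, and likewise $B'_1$ arises from $B_1$ and $B_3$. The risk is that the other pairing arises, giving $B_0\cup B_3$, or that one long face absorbs all four. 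I would settle this by a local picture at the two edges, labelling $B_0,B_2$ as the faces on the same side relative to the traversal direction fixed by the colors. That labelling is exactly what makes the statement's pairing the correct one.

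Finally I would verify that $M(G')$ is a $\Pi$-map and that $\mu$ transfers. The vertex colors are untouched. The new edges get colors $\mu(v_1v_2)=\mu(v_3v_0):=\mu(v_0v_1)=\mu(v_2v_3)$, and properness at each $v_i$ is preserved because the neighbor color at the swapped slot is unchanged: $\mu(v_1)=\mu(v_3)$ and $\mu(v_0)=\mu(v_2)$. Since $B_i$ and $B_{i+2}$ are concatenations of the same O-arc $(a\,_-^b\,c\,_-^a\,d\,_-^c\,b\,_-^d)$ of (\ref{sche}), cutting each at the edge of the same color and splicing them preserves the period, so $B'_0$ and $B'_1$ have color cycles $(a,b,c,d)^{\ell/4}$. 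Their lengths $|B_0|+|B_2|$ and $|B_1|+|B_3|$ stay divisible by 4, consistent with Theorem~\ref{fo}. The remaining conditions, girth 4 and no loops or multiple edges, are built into Definition~\ref{exchange}.

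The main obstacle is the face-tracing step: I must show that the orientation induced by the color data forces the pairing $B_0$ with $B_2$, giving two faces rather than one or three. I would handle it by a careful local diagram, checking both DETGC and RETGC directions.
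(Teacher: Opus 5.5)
Your proposal is correct and is essentially the paper's argument: $|V|$ and $|E|$ are unchanged while four faces become two, so $\chi$ drops by $2$ and $g(S')=g(S)+1$. Your same-slot rotation system is the combinatorial form of the handle the paper glues into the holes left by $B_0\cup B_1\setminus v_0v_1$ and $B_2\cup B_3\setminus v_2v_3$; as in the paper, this only gives the genus of the constructed embedding, not that $S'$ realizes the genus of $G'$. The obstacle you flag is easily resolved. Keep the darts $d_0=(v_0\to v_1)$, $d_1=(v_1\to v_0)$, $d_2=(v_2\to v_3)$, $d_3=(v_3\to v_2)$ in their slots; then the face permutation $\phi$ of $M(G)$ becomes $\phi\,(d_0\,d_2)(d_1\,d_3)$, so four distinct faces merge into exactly two and no single long face can arise. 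Moreover, no periodic belt reads both $\mu(v_0),\mu(v_1)$ and $\mu(v_1),\mu(v_0)$ consecutively in the clockwise sense, so the common-O-arc hypothesis forces the faces through $d_0,d_2$ and through $d_1,d_3$ to be the pairs $\{B_0,B_2\}$ and $\{B_1,B_3\}$.
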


\begin{proof}
The conclusion of the statement arises from the fact that the Euler characteristic of $G'$,  namely $$\chi(M(G'))=|V(G')|-|E(G')|+|F(M(G')|=|V(G)|-|E(G)|+|F(M(G)|-2=\chi(M(G)|-2$$ is diminished two units with respect to the Euler characteristic $\chi(G)=|V(G)|-|E(G)|+|F(M(G)|=2-2g(G)$ of $G$, as a result that the four belts $B_i$ ($i=0,1,2,3$) of $G$ are replaced in $G'$ by the new belts $B'_0,B'_1$, while all other belts of faces of $M(G)$ are preserved in $M(G')$. Then, $2-2g(G')=\chi(M(G'))=\chi(M(G))-2=2-2g(G)-2=-2g(G)$
implies $g(G')=g(G)+1$. The faces corresponding to the belts $B'_0,B'_1$ are separated by the new edges $v_1v_2,v_0v_3$ into the two halves of a resulting handle with two cyclic borders attachable to the holes produced by the removal of the interiors of the face unions $B_0\cup B_1\setminus(v_0v_1)$ and $B_2\cup B_3\setminus(v_2v_3)$. The rising of the genus in one unit is illustrated in Example~\ref{lastrem} and Figure~\ref{paquita}.  
\end{proof}

\begin{figure}[htp]
\includegraphics[scale=0.57]{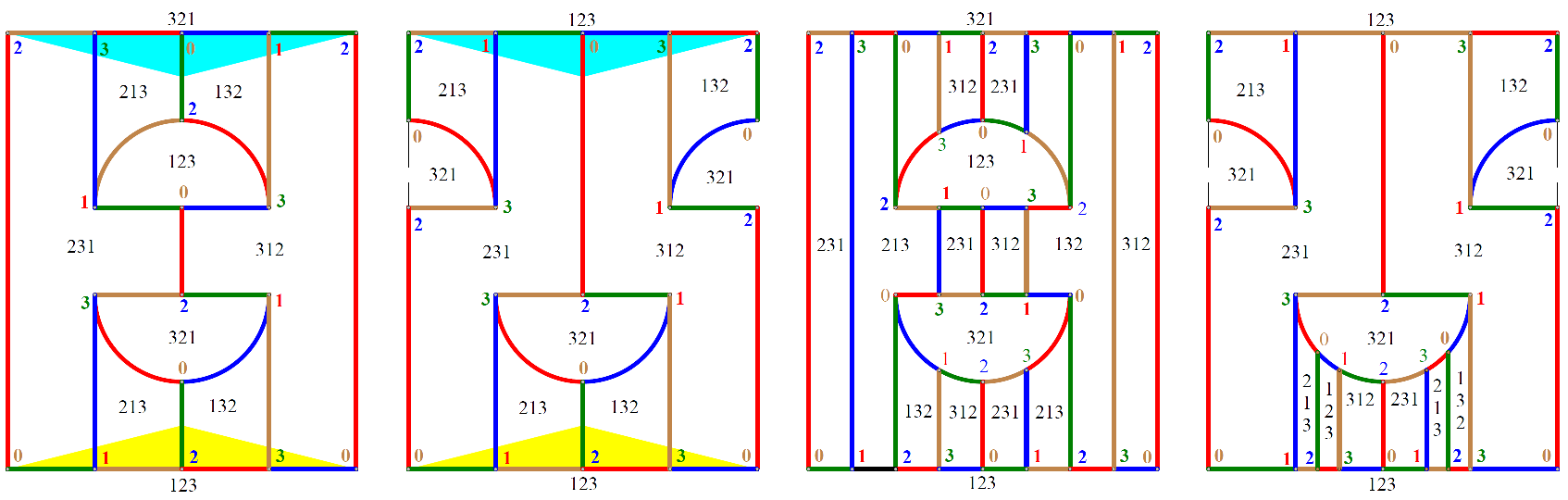}
\caption{Two variations of Figure~\ref{torcuato} and two related unfoldings. Colors as in (\ref{colors}).}
\label{bluyelow}
\end{figure} 

\begin{example}\label{tora}
 Figure~\ref{noera} consists of two horizontal sequences of exchanges, one on top of the other, both with PFCs as in Definition~\ref{pipi} equivalent to RETGCs as in Definition~\ref{opto}. 
 The top sequence starts on the left as the union of two $(1\times 4)$ ycutouts forming a $(3\times 4)$ycutout $X$ of a 2-component graph $G$ obtained by identifying the top and bottom horizontal borders of $X$. A 4-cycle $C$ with yellow interior allows an exchange of $G$ into a graph $G'$ obtained by top-bottom identification of a corresponding $(3\times 4)$ ycutout $X'$ shown to the immediate right of the leftmost ``$\rightarrow$" in the figure.  
The cutout $X'$ is also contained in the left-and-center of a $(5\times 4)$ ycutout $X''$ of the disjoint union of $G'$ and a 3-cube. In $X''$, a 4-cycle with light-blue interior is indicated for an exchange that leads to a $(5\times 4)$ ycutout $X'''$ of a graph $G''$ shown to the right of the second ``$\rightarrow$''. This sequence, presented so far also as in (\ref{octaedro}) (where exchange 4-cycles $C$ have $C_0$ given via segments and $C_1$ via colons or diaereses), extends to an infinite sequence of $((2k+1)\times 4)$ ycutouts by repeating the addition of a 3-cube to the right and a corresponding exchange. Instead of continuing one more step of such process, a different example of an exchange is given in the figure via a suggested 4-cycle with gray interior.

The lower sequence in Figure~\ref{noera} starts on its left with a $(4\times 4)$ bicutout $X$ containing a $(4\times 3)$ xcutout of a disjoint union $G$ of two  3-cubes. In it, a pair of 4-cycles with gray and light-blue interiors takes, via corresponding exchanges and to the right of the leftmost ``$\rightarrow$", to the $(4\times 4)$ bicutout $X'$ of a planar graph $G'$. About it, a 4-cycle $C=C_0\cup C_1$ with yellow interior takes, via an exchange of $C_0$ by $C_1$ and to the right of the second ``$\rightarrow$", to the $(4\times 4)$  bicutout $X''$ of a toroidal graph $G''$. In it, a suggested 4-cycle takes, to the right of the third ``$\rightarrow$", to an extension $X'''$, namely the bicutout of a 2-toroidal graph $G^{**}$, so its genus is 2, see Remark~\ref{lastrem}. 

 \begin{eqnarray}\label{octaedro}\begin{array}{lllll}
1\hspace*{2.2mm} _-^2\hspace*{2.2mm} 0\hspace*{2.2mm} ..\hspace*{2.2mm} 2\hspace*{2.2mm} _-^1\hspace*{2.2mm} 3&&1\hspace*{2.2mm} _-^2\hspace*{2.2mm} 0\hspace*{2.2mm} _-^3\hspace*{2.2mm} 2\hspace*{2.2mm} _-^1\hspace*{2.2mm} 3\hspace*{2.2mm}..\hspace*{2.2mm}1\hspace*{2.2mm}_-^2\hspace{2.2mm}0&&
1\hspace*{2.2mm} _-^2\hspace*{2.2mm} 0\hspace*{2.2mm} _-^3\hspace*{2.2mm} 2\hspace*{2.2mm} _-^1\hspace*{2.2mm} 3\hspace*{2.2mm}_-^0\hspace*{2.2mm}1\hspace*{2.2mm}_{..}^2\hspace{2.2mm}0\\

\!_3|\hspace*{6mm}_1|\hspace*{6mm}_0|\hspace*{6mm}_2|&
&\!_3|\hspace*{6mm}_1|\hspace*{6mm}_0|\hspace*{6mm}_2|\hspace*{6mm}_3|\hspace*{6mm}_1|&&
\!_3|\hspace*{6mm}_1|\hspace*{6mm}_0|\hspace*{6mm}_2|\hspace*{6mm}_3|\hspace*{6mm}_1|\\

2\hspace*{2.2mm} ^-_0\hspace*{2.2mm}3\hspace*{7mm} 1\hspace*{2.2mm} ^-_3\hspace*{2.2mm} 0&&
2\hspace*{2.2mm} ^-_0\hspace*{2.2mm}3\hspace*{7mm} 1\hspace*{2.2mm} ^-_3\hspace*{2.2mm} 0\hspace{6.5mm}2\hspace*{2.2mm} ^-_0\hspace*{2.2mm}3&&
2\hspace*{2.2mm} ^-_0\hspace*{2.2mm}3\hspace*{7mm} 1\hspace*{2.2mm} ^-_3\hspace*{2.2mm} 0\hspace{6.5mm}2\hspace*{2.2mm} ^-_0\hspace*{2.2mm}3\\

\!_1|\hspace*{6mm}_2|\hspace*{6mm}_2|\hspace*{6mm}_1|&&
\!_1|\hspace*{6mm}_2|\hspace*{6mm}_2|\hspace*{6mm}_1|\hspace*{6.5mm}_1|\hspace*{6mm}_2|&&
\!_1|\hspace*{6mm}_2|\hspace*{6mm}_2|\hspace*{6mm}_1|\hspace*{6.5mm}_1|\hspace*{6mm}_2|\\

0\hspace*{2.2mm} ^-_3\hspace*{2.2mm}1\hspace*{7mm} 3\hspace*{2.2mm} ^-_0\hspace*{2.2mm} 2&\rightarrow&
0\hspace*{2.2mm} ^-_3\hspace*{2.2mm}1\hspace*{7mm} 3\hspace*{2.2mm} ^-_0\hspace*{2.2mm} 2\hspace*{7mm}0\hspace*{2.2mm} ^-_3\hspace*{2.2mm}1&\rightarrow&
0\hspace*{2.2mm} ^-_3\hspace*{2.2mm}1\hspace*{7mm} 3\hspace*{2.2mm} ^-_0\hspace*{2.2mm} 2\hspace*{7mm}0\hspace*{2.2mm} ^-_3\hspace*{2.2mm}1\\

\!_2|\hspace*{6mm}_0|\hspace*{6mm}_1|\hspace*{6mm}_3|&&
\!_2|\hspace*{6mm}_0|\hspace*{6mm}_1|\hspace*{6mm}_3|\hspace*{7mm}_2|\hspace*{6mm}_0|&&
\!_2|\hspace*{6mm}_0|\hspace*{6mm}_1|\hspace*{6mm}_3|\hspace*{7mm}_2|\hspace*{6mm}_0|\\

3\hspace*{2.2mm} ^-_1\hspace*{2.2mm}2\hspace*{2.2mm}..\hspace*{2.2mm} 0\hspace*{2.2mm} ^-_2\hspace*{2.2mm} 1&&
3\hspace*{2.2mm} ^-_1\hspace*{2.2mm}2\hspace*{2.2mm} ^-_3\hspace*{2.2mm} 0\hspace*{2.2mm} ^-_2\hspace*{2.2mm} 1\hspace*{2.2mm}..\hspace*{2.2mm}3\hspace*{2.2mm}^-_1\hspace*{2.2mm}2&&
3\hspace*{2.2mm} ^-_1\hspace*{2.2mm}2\hspace*{2.2mm} ^-_3\hspace*{2.2mm} 0\hspace*{2.2mm} ^{..}_2\hspace*{2.2mm} 1\hspace*{2.2mm}^-_0\hspace*{2.2mm}3\hspace*{2.2mm}^-_1\hspace*{2.2mm}2\\

\!_2|\hspace*{6mm}_0|\hspace*{6mm}_1|\hspace*{6mm}_3|&&
\!_2|\hspace*{6mm}:\hspace*{7mm}:\hspace*{6mm}_3|\hspace*{7mm}_1|\hspace*{6mm}_3|
&&
\!_2|\hspace*{25mm}:\hspace*{7mm}:\hspace*{5mm}_3|\\

1\hspace*{2.2mm} ^-_2\hspace*{2.2mm}0\hspace*{2.2mm}..\hspace*{2.2mm} 2\hspace*{2.2mm} ^-_1\hspace*{2.2mm} 3&&
\,1\hspace*{2.2mm} ^-_2\hspace*{2.2mm}0\hspace*{2.2mm} ^-_3\hspace*{2.2mm} 2\hspace*{2.2mm} ^-_1\hspace*{2.2mm} 3\hspace*{2.2mm}..\hspace*{2.2mm}1\hspace*{2.2mm}^-_2\hspace*{2.2mm}0&&\,1\hspace*{2.2mm} ^-_2\hspace*{2.2mm}0\hspace*{2.2mm} ^-_3\hspace*{2.2mm} 2\hspace*{2.2mm} ^-_1\hspace*{2.2mm} 3\hspace*{2.2mm}^-_2\hspace*{2.2mm}1\hspace*{2.2mm}^{..}_2\hspace*{2.2mm}0\\
\end{array}\end{eqnarray}

If we do not extend $X''$ to $X'''$, then a handle attached by the suggested exchange fails to raise the genus. This is exemplified on the center-left of Figure~\ref{ahiva}, where an exchange is indicated in dashed tracing in a 6-cutout $Y$ equivalent to  $X''$, 
with a redrawing of the two edges produced by the exchange traversing the tilted pair of opposite sides of $Y$ and identification of all opposite sides of $Y$. So, this still yields a graph of genus 1, call it  $G^*\neq G''$. Colors gray, yellow and light-blue are added to three resulting faces for ease of recognition. 
 
On the right side of Figure~\ref{ahiva}, an $(8\times 4)$ bicutout $Y^{**}$ formed by the union of two $(4\times 4)$ bicutouts $Y^*_1,Y^*_2$, each equivalent to $Y$, is presented, representing $G^{**}$, see Remark~\ref{lastrem}.   
On the other hand, the extension graph $G^{**}$ in its $(8\times 4)$ bicutout $X'''$, admits more exchanges, taking it to graphs with genera $>2$. 
\end{example}

\subsection{Raising the genus of orientable surfaces}

\begin{theorem}\label{finally}\cite[Theorem 29]{+1}
Let $X$ be a cutout of a $\Pi$-map $M(G)$ of a simple cubic graph $G$ of girth 4. Let $C$ be a 4-cycle $C=C_0\cup C_1$ with $C_0,C_1$ as in (\ref{c0c1}) and $C_1$ crossing edges of $G\setminus C_0$ in all cutouts of orientable surfaces realizing the genus of $G$. Let $G\setminus E(C_0)$ be connected. Then, each exchange in $X$ takes to a connected simple cubic graph $G'$ of girth 4 whose genus is larger than that of $G$. If $G$ has an ETGC, then $G'$ has an ETGC and a PFC corresponding to that of $G$. 
\end{theorem}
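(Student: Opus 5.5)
The plan has three parts: the combinatorial properties of $G'$, the genus increase, and the transfer of the coloring. Throughout, $G'=G-C_0+C_1$, where $C=(v_0,v_1,v_2,v_3)$ satisfies the conditions of Definition~\ref{exchange}.

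\textbf{Combinatorial properties.} Cubicity is immediate. Each of $v_0,v_1,v_2,v_3$ loses exactly one edge of $C_0$ and gains exactly one edge of $C_1$, so every degree is preserved. For connectivity, $G\setminus E(C_0)$ is connected by hypothesis. Adding $C_1$ to it keeps it connected, so $G'$ is connected. Simplicity and girth 4 are part of what Definition~\ref{exchange} demands of an exchange; I would simply invoke that. The only thing to note is that $C_1\cap E(G)=\emptyset$ rules out parallel edges.

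\textbf{Genus increase.} I would argue by contradiction. Suppose $g(G')\le g(G)$, and take a genus-realizing cutout of $G'$. Deleting $C_1$ from it and reinserting $C_0$ gives an embedding of $G$ in a surface of genus at most $g(G')$. Combined with the previous theorem's Euler-characteristic count (four faces $B_0,\dots,B_3$ replaced by two, so $\chi$ drops by 2), this should force the embedding of $C_1$ in a minimal-genus cutout of $G$ to avoid crossing $G\setminus C_0$. That contradicts the hypothesis that $C_1$ crosses edges of $G\setminus C_0$ in all genus-realizing cutouts. Hence $g(G')>g(G)$.

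This step is the main obstacle. Making the contradiction rigorous requires comparing the rotation systems of $G$ and $G'$ at the four vertices $v_i$ only. I would try the following: show that any embedding of $G'$ restricts, after replacing $C_1$ by $C_0$ along the same face boundaries, to an embedding of $G$ in which $C_1$ would be drawable inside faces without crossings.

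\textbf{Transfer of the coloring.} Set $\mu'=\mu$ on $V(G)\cup E(G)\setminus C_0$, and give both edges of $C_1$ the common color $\mu(v_0v_1)=\mu(v_2v_3)$. This color is legal at each $v_i$, since that vertex loses exactly that edge color and gains it back. The closed neighborhood of $v_0$ changes from containing $v_1$ to containing $v_3$, and these two vertices share a color because $\mu(v_1)=\mu(v_3)$. The same holds for $v_1$ (trading $v_0$ for $v_2$, with $\mu(v_0)=\mu(v_2)$), and symmetrically for $v_2$ and $v_3$. Hence condition (a) of Definition~\ref{ahora} holds and each color class remains an EDS, so $\mu'$ is an ETC.

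For the girth condition, each new belt $B'_0,B'_1$ is a concatenation of O-arcs of $B_0,B_2$ (resp. $B_1,B_3$) glued along same-colored edges. By the common O-arc assumption of the previous theorem, its color cycle is again periodic of the form (\ref{sche}). Any new 4-cycle through $C_1$ then inherits the VEGC property from the O-arcs. So $\mu'$ is a periodic ETGC, directed or reversed as $\mu$ was.

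Finally, Theorem~\ref{lapsus} converts $\mu'$ into a PFC of $M(G')$. Every belt other than $B'_0,B'_1$ keeps its 3-permutation. $B'_0$ takes the common 3-permutation of $B_0$ and $B_2$, which coincide because they read the same O-arc starting at a color-0 vertex; $B'_1$ likewise takes that of $B_1$ and $B_3$. So the PFC corresponds to that of $G$.
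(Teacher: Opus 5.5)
The genus step in your proposal has a genuine gap, and it cannot be closed from the hypothesis alone.

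Your checks of cubicity, connectivity and simplicity, and your transfer of $\mu$ to $G'$, are sound. They are also more explicit than the paper, which leaves the coloring part implicit. (Incidentally, the VEGC condition needs no O-arc argument. In an ETC the four vertices of a $4$-cycle are pairwise within distance $2$, so they receive distinct colors $a,b,c,d$ in cyclic order. Opposite edges then take colors in the disjoint sets $\{c,d\},\{a,b\}$, resp.\ $\{a,d\},\{b,c\}$.)

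The problem is the move ``delete $C_1$ from a minimum-genus embedding of $G'$ and reinsert $C_0$'', which is not available. The embedding of $G'-C_1=G-C_0$ obtained this way need not resemble $M(G)$ near $v_0,\dots,v_3$. In it, $v_0,v_1$ (or $v_2,v_3$) need not lie on a common face, so reinserting $v_0v_1$ and $v_2v_3$ may cost up to two handles. The Euler-characteristic count of the previous theorem concerns one specific embedding of $G'$, so it cannot constrain an arbitrary one.

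More fundamentally, the crossing hypothesis speaks only about minimum-genus embeddings of $G$. What it gives is $g(G\cup C_1)>g(G)$. Since $G'\subset G\cup C_1$, monotonicity runs the wrong way. Combined with the one-handle-per-edge bound it yields only $g(G')\ge g(G\cup C_1)-2\ge g(G)-1$. So no contradiction argument built on that hypothesis can reach $g(G')>g(G)$.

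The paper does not argue by contradiction; its proof is constructive. Deleting $C_0$ from the genus-realizing map merges $B_0,B_1$ into one face and $B_2,B_3$ into another. In Example~\ref{satan} these are the $10$-belts M$'$ and N$'$; in general their boundaries are cycles of length $\equiv 2 \bmod 4$. Their interiors are removed and the two boundary cycles are glued to the two ends of a tube $\Theta(S^1\times[0,1])$. The two edges of $C_1$ run along the tube and cut it into the new faces $B'_0$ and $B'_1$.

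This exhibits a cellular map $M(G')$ on a surface of genus $g+1$, consistent with the drop of $\chi$ by $2$. It has exactly the belts that your final appeal to Theorem~\ref{lapsus} presupposes. The missing ingredient in your write-up is this explicit handle construction. Note that the paper, too, does not derive the strict inequality $g(G')>g(G)$ independently; it reads the crossing hypothesis as saying the handle cannot be avoided.
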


\begin{example}\label{satan}
A case of raising the genus one unit via an exchange is given on the left side of Figure~\ref{paquita}, where the graph $G^{**}$ on the lower right of Figure~\ref{noera} is redrawn in a bicutout $X''''$ translated from the bicutout $X'''$. Notice $X'''$ has two 10-belts M$'$ and N$'$ with the interior of their faces
 in yellow and light-blue, respectively,
having each just an end-vertex of each of the two crossover green (color 3) edges. Each of M$'$ and N$'$ is formed by the union of two faces of $M(G'')$ with 3-permutation colors 321 and 132, produced by the removal of the red edges of $C_0$.
\end{example}

\begin{proof} Let $S^1$ denote the circle obtained by identifying the ends 0 and 1 of the real interval $[0,1]$. A handle $H$ containing two green edges as in Example~\ref{satan} is given by the image of a homeomorphism $\Theta:S^1\times[0,1]\rightarrow X''''$ with $\Theta(S^1\times\{0\})$ equal to M$'$ and $\Theta(S^1\times\{1\})$ equal to N$'$, using the notation of Example~\ref{satan} and $X''''$ in place of $X$. By removing the interiors of the faces of N$'$ and M$'$ and identifying the borders of $\Theta(S^1\times\{0\})$ and $\Theta(S^1\times\{1\})$ of $\Theta(S^1\times[0,1])\equiv H$ with the belts M$'$ and N$'$ via $\Theta$, respectively, an embedding of $G^{**}$ into a surface of larger genus is obtained, like $\mathbb{T}_2$ of genus 2, a 2-toroid, in the case of Example~\ref{satan}. 
These treatments generalize to the general situation of the theorem, where two $\ell$-cycles with $\ell\equiv 2 \mod 4$, like M$'$-N$'$, are identified with the circle borders of a handle.
\end{proof}

\begin{figure}[htp]
\includegraphics[scale=0.57]{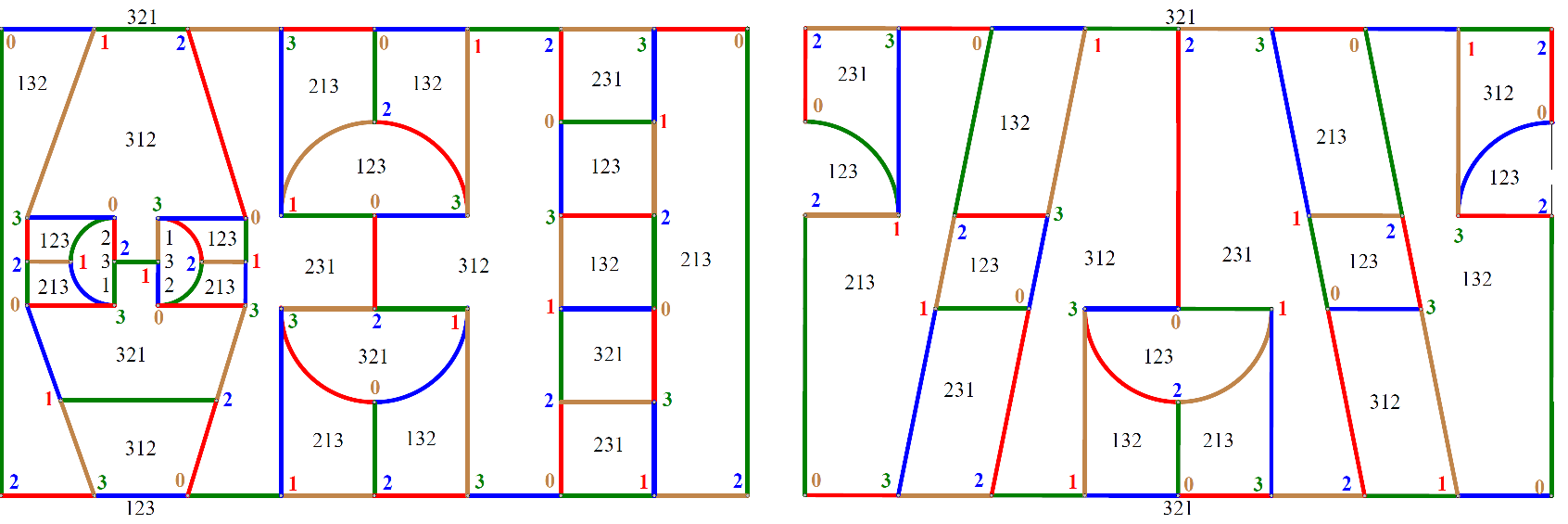}
\caption{Additional cases in Example~\ref{cacho}. Colors as in (\ref{colors}).}
\label{ideal-3}
\end{figure}

\begin{example} In continuation of Example~\ref{satan}, note that the handle
$H$ contributes two new faces in replacement of the faces with 10-belts M$'$ and N$'$ (see Figure~\ref{paquita}): One with 16-belt M and the other one with 8-belt N, whose respective color cycles are 
$$(2\,_-^1\,3\,_-^2\,0\,_-^3\,1\,_-^0\,2\,_-^1\,3\,_-^2\,0\,_=^3\,1\,_-^0\,2\,_-^1\,3\,_-^2\,0\,_-^3\,1\,_-^0\,2\,_-^1\,3\,_-^2\,0\,_=^3\,1\,_-^0)=(2\,_-^1\,3\,_-^2\,0\,_=^3\,1\,_-^0)^4$$ counterclockwise from the upper-right of 10-belt M$'$ and $$(3\,_-^0\,2\,_-^1\,0\,_=^3\,1\,_-^2\,3\,_-^0\,2\,_-^1\,0\,_=^3\,1\,_-^2)=(3\,_-^0\,2\,_-^1\,0\,_=^3\,1\,_-^2)^2$$ clockwise from the lower-right of 10-belt N$'$, respectively, where $_=^3$ indicates the edges implicated in the exchange. The edges bordering yellow areas and the tilted green edges induce the 16-belt M, while the edges bordering the gray areas and the tilted green edges induce the 8-belt G. The right side of Figure~\ref{paquita}, mentioned in Remark~\ref{lastrem}, offers an embedding of $G^{**}$ into an octagonal cutout of $\mathbb{T}_2$, again showing that $G^{**}$ is 2-toroidal.  
\end{example}

\subsection{Exchanges via UETGCs}

\begin{example}\label{tor}
The middle-center of Figure~\ref{torcuato} contains a $(9\times 1)$ xcutout consisting of two component $(4\times 1)$ xcutouts of respective 3-cubes $G_0$ (left) and $G_1$ (right) separated by a 4-cycle $C$ with gray interior. 
This takes, via up and down arrows, to two copies $X$ and $Y$ of a $(9\times 1)$ xcutout of a connected simple planar cubic graph $G$ of girth 4 with $\ell$-belts only for $\ell\equiv 0 \mod 4$, depicted also to the right of the horizontal arrow as the image of a 3-dimensional union of the 3-cubes $G_0$ and $G_1$ with the pair $C_1\in E(C)$ joining them. No PFC as in previous figures are seen in this figure, as the shown 8-belts do not present the periodic patterns of (\ref{reverse}) with $x=d$ that guarantee such PFCs, so for now we are in the presence of a UETGC. This is corrected to DETGC or RETGC in Figure~\ref{bluyelow}.

\begin{eqnarray}\label{march}\begin{array}{llll}
2\hspace*{2.2mm} _-^0\hspace*{2.2mm}3\hspace*{2.2mm}_-^1\hspace*{2.2mm}0\hspace*{2.2mm}_-^2\hspace*{2.2mm}1\hspace*{2.2mm}_-^3\hspace*{2.2mm}2&&&
2\hspace*{2.2mm} _-^0\hspace*{2.2mm}1\hspace*{2.2mm}_-^3\hspace*{2.2mm}0\hspace*{2.2mm}_-^2\hspace*{2.2mm}3\hspace*{2.2mm}_-^1\hspace*{2.2mm}2\\
\;|\hspace*{5.7mm}_2|\hspace*{6.1mm}_3|\hspace*{6mm}_0|\hspace*{8mm}|&&&
\!_3|\hspace*{6mm}_2|\hspace*{6mm}_1|\hspace*{6mm}_0|\hspace*{6mm}_3|\\
\;|\hspace*{7.2mm}1\hspace*{2.2mm} \frac{0}{3}\hspace*{2.1mm}\,^2_0\hspace*{2.2mm} \frac{1}{2}\hspace*{2.2mm}3\hspace*{7.2mm}|&&&
\;_2^0\hspace*{2.2mm}\frac{1}{0}\hspace*{2.2mm}3\hspace*{7.3mm} |\hspace*{7.2mm} 1\hspace*{2.2mm}\frac{2}{3}\hspace*{2.4mm}_2^0 \\
\!_1|\hspace*{15mm}_1|\hspace*{15mm}_1|&&&
\!_1|\hspace*{16.7mm}|\hspace*{15mm}_1|\\
\;|\hspace*{7.2mm}3\hspace*{2.2mm} \frac{0}{1}\hspace*{2.2mm}\,^2_0\hspace*{2.2mm} \frac{3}{2}\hspace*{2.2mm} 1\hspace*{7.2mm}|&&&
\;|\hspace*{7.2mm}3\hspace*{2.2mm} \frac{0}{1}\hspace*{2.2mm}\,^2_0\hspace*{2.2mm} \frac{3}{2}\hspace*{2.2mm} 1\hspace*{7.2mm}|\\
\;|\hspace*{5.7mm}_2|\hspace*{6mm}_3|\hspace*{6mm}_0|\hspace*{8.1mm}|&&&
\;|\hspace*{5.7mm}_2|\hspace*{6mm}_3|\hspace*{6mm}_0|\hspace*{8.1mm}|\\
2\hspace*{2.2mm} _0^-\hspace*{2.2mm}3\hspace*{2.2mm} _1^-\hspace*{2.2mm} 0\hspace*{2.2mm} _2^-\hspace*{2.2mm} 1\hspace*{2.2mm} _3^-\hspace*{2.2mm} 2&&&
2\hspace*{2.2mm} _0^-\hspace*{2.2mm}3\hspace*{2.2mm} _1^-\hspace*{2.2mm} 0\hspace*{2.2mm} _2^-\hspace*{2.2mm} 1\hspace*{2.2mm} _3^-\hspace*{2.2mm} 2\\
\end{array}\end{eqnarray}

In the figure, the light-blue and yellow shadows are indicating the upper and lower 4-cycles in $X$ and $Y$, in order to locate them in the alternate representations $X'$ and $Y'$ of $X$ and $Y$ on the left half of Figure~\ref{bluyelow} (see them also schematized in  (\ref{march})). This allows unfoldings like those two depicted as xcutouts on the right side of the figure. By the way, in Figure~\ref{bluyelow}, PFCs as in Figure~\ref{torcuato} equivalent to depicted RETGCs appear due to the fact that all color cycles for the shown belts satisfy (\ref{reverse}) with $x=d$.
 In  (\ref{march}), 4-belts presented as semicircles on the left half of Figure~\ref{bluyelow} (in place  of rhombs) appear horizontally compressed, with some opposite vertices shown in small type.

Note that among others, (e.g. s (\ref{tess}), (\ref{octaedro}) and (\ref{march})), $G$ is not 3-edge connected, (but Example~\ref{klein} using a special case of the amalgam tool in Definition~\ref{amalgam} yields a toroidal cubic map of girth 4 with $\ell$-belts having $\ell\equiv 0 \mod 4$ not admitting ETC's, which inspires Definition~\ref{bicu} of toroidally 3-edge-connected graph and Conjecture~\ref{toroid}).
\end{example} 

\begin{figure}[htp]
\includegraphics[scale=0.57]{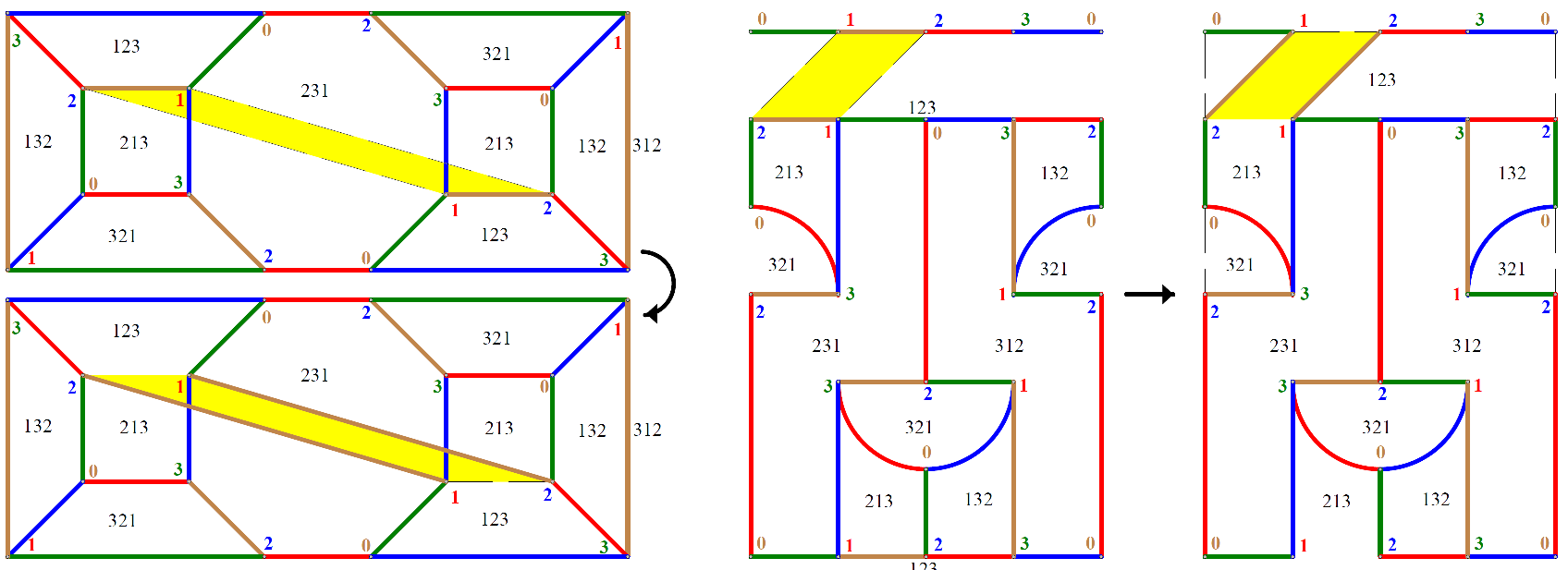}
\caption{Taking xcutout  of planar map into bicutout of  toroidal map. Colors as in (\ref{colors}).}
\label{sqsq}
\end{figure}

\begin{example}\label{cacho}
Figure~\ref{ideal-3} applies the unfolding tool to the two left representations in Figure~\ref{bluyelow}. Associated PFCs
$\mu$ equivalent to depicted RETGCs $\mu'$ as in Subsection~\ref{face} are indicated.
\end{example}

\begin{example}\label{sq}
The right side of Figure~\ref{sqsq} shows how to transform the xcutout $Y'$ in Example~\ref{tor} into the bicutout of a toroidal graph by adding an upper extra copy of the bottom 4-path in $Y'$ together with an exchanging 4-cycle represented as a yellow rhomb. On the left of the figure, an alternate representation of such transformation is given. In both cases, associated PFCs $\mu$ equivalent to shown RETGCs as in Subsection~\ref{face} are indicated.
Additionally, Figure~\ref{idea-24} shows extensions of both $X'$ and $Y'$ with corresponding alternate 3-dimensional representations of the resulting graphs shown to their  right. By rotating a counterclockwise right angle both $X'$ and $Y'$, and applying Definition~\ref{amalgam} to the resulting cutouts, Figure~\ref{vertical} shows the first stages of what is obtained. 
\end{example}

\begin{figure}[htp]
\includegraphics[scale=0.57]{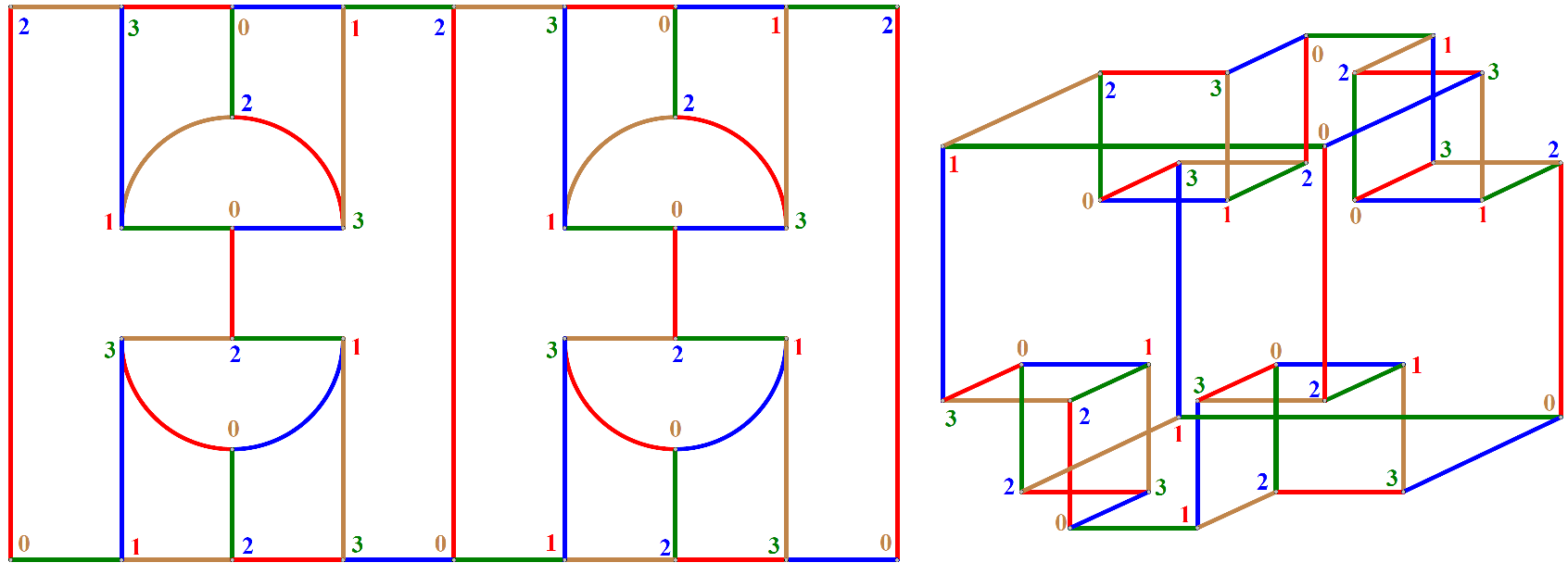}

\includegraphics[scale=0.57]{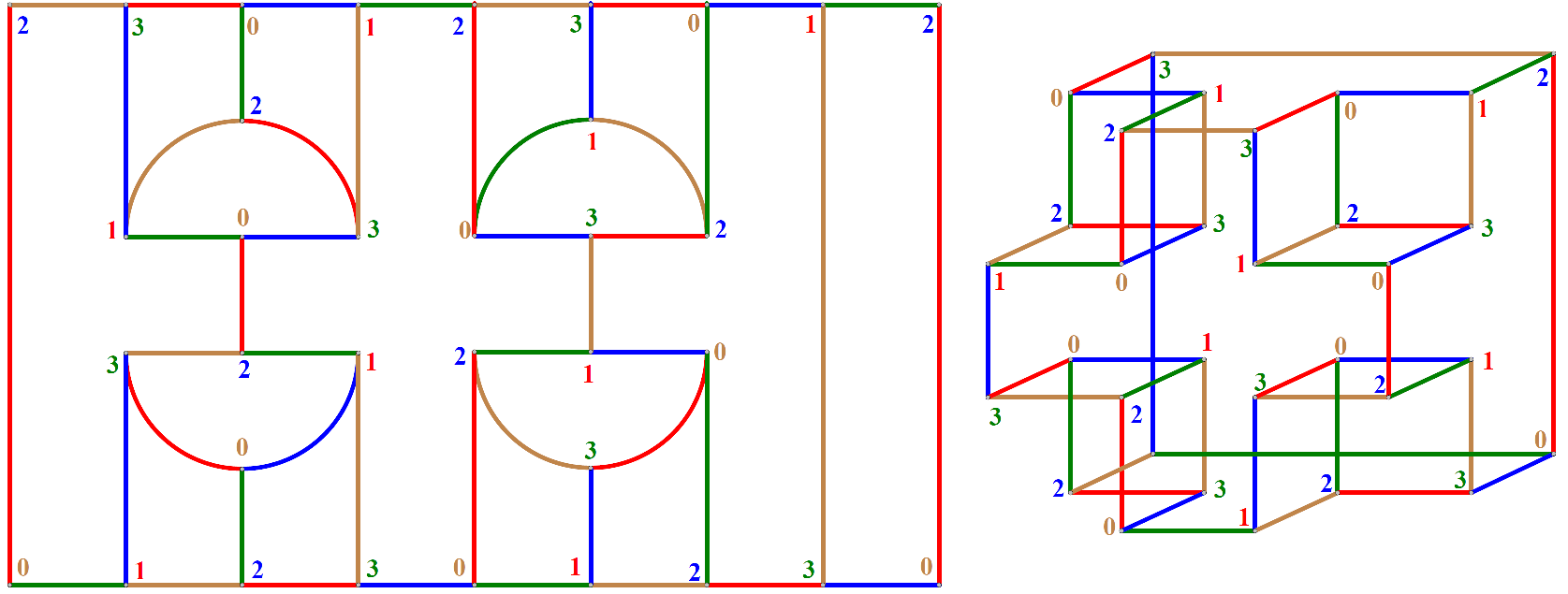}

\includegraphics[scale=0.57]{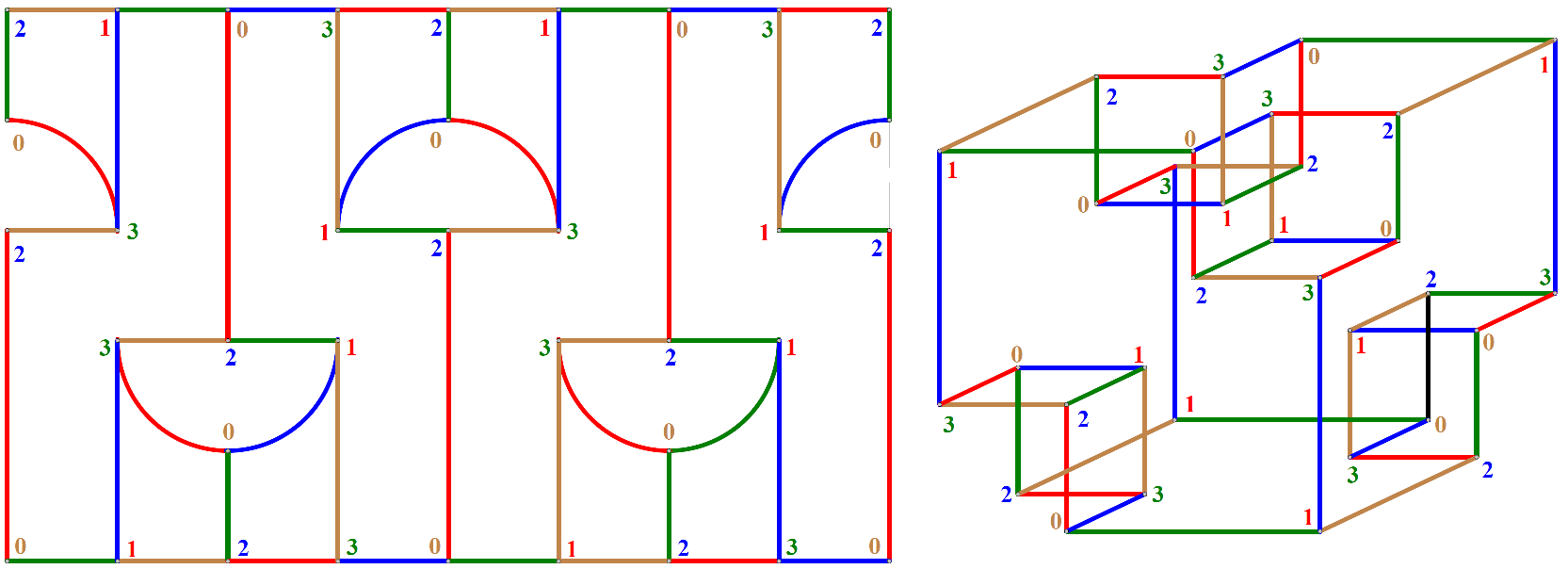}
\caption{Three pairs of extensions of the left cutout in Figure~\ref{bluyelow}. Colors as in (\ref{colors}).}
\label{idea-24}
\end{figure}

\begin{figure}[htp]
\includegraphics[scale=0.57]{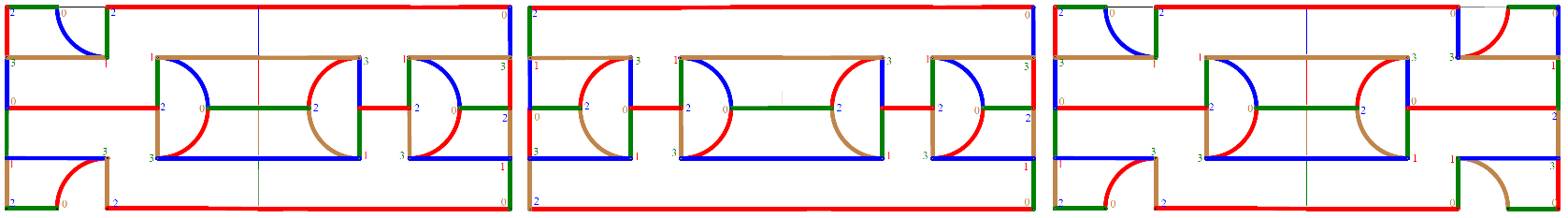}
\caption{Three amalgams applying Definition~\ref{amalgam}. Colors as in (\ref{colors}).}
\label{vertical}
\end{figure}

\section{Amalgams}

\begin{definition}\label{amalgam}
Let  $X_1,X_2$ be cutouts of respective simple planar cubic graphs $G_1,G_2$ of girth 4. Let  $C_i\subset G_i$ be a cycle given by the interval identification 
$[(0,0),(x_0,0)]\equiv[(0,y_0),(x_0,y_0)]$ in parallel in $X_i$, for $i=1,2$. Let $C_1\equiv C_2$, meaning there is a graph isomorphism $\rho:C_1\rightarrow C_2$ such that $\rho(v_i^j)=v_i^j$, for $i=1,2$ and $j=0,1$, where $v_i^0\in V(C_i)$ corresponds to $(0,0)\equiv(0,y_0)$ and $v_i^1\in V(C_i)$ corresponds to $(x_0,0)\equiv(x_0,y_0)$ before identification of the cutouts $X_i$ as in Definition~\ref{cut} so $C_1$ and $C_2$ keep the same orientation.
Then, an {\it amalgam} of $X_1$ to $X_2$ via $C_1\equiv C_2$ is a cutout $X$ of the simple planar cubic graph $G$ of girth 4 obtained by identifying the right side of $X_1$ in parallel to the left side of $X_2$, yielding $$(x_0,0)_{X_1}\equiv(0,0)_{X_2}, (x_0,y_0)_{X_1}\equiv(0,y_0)_{X_2}\mbox{ and }[(x_0,0),(x_0,y_0)]_{X_1}\equiv[(0,0),(0,y_0)]_{X_2}$$ with $E(C_1)\equiv E(C_2)$ absent in $G$ and each path $(v_1,v',v_2)$ having $v_i\in V(G_i)\setminus C_i$, ($i=1,2$), and $v'\in C_1\equiv C_2$ obtained by identifying 
vertices $v'_1\in V(C_1)$ and $v'_2\in V(C_2)$, with $v_1v'_1\in E(G_1)$ and $v'_2v_2\in E(G_2)$ replaced by the edge $v_1v_2$ in $G$.
\end{definition}

\begin{example}\label{klein}
The left side of Figure~\ref{kk} shows a right-angle rotated cutout, let us call it $X_1$ or $X_2$, of the left-side cutout in Figure~\ref{bluyelow}. To the right of it, the amalgam provided by the special case $X_1=X_2$ in Definition~\ref{amalgam} takes place yielding the bicutout $X$ of a graph $G$. However, interpreting from the deletable cycle $C_1\equiv C_2$ the half-edges on its left as continuations of the corresponding half-edges on its right produces a toroidal graph $G$ that does not admit an ETC, even though all $\ell$-belts of $G$ have $\ell\equiv 0 \mod 4$.
It seems that the condition in the following definition must be included in order to insure the existence of an ETC. On the other hand, translating the yellow upper triangle to have its upper border 4-cycle identified with the lower border 4-cycle of the yellow lower triangle makes the dashed-bordered rhomboid to be interpreted as a bicutout $X'$ containing a toroidal graph $G'$ with an RETGC and corresponding equivalent PFC.    
\end{example}

\begin{definition}\label{bicu}
Let $G$ be a toroidal connected simple cubic $\Pi$-graph of girth 4. Let $X$ be a bicutout of a toroidal map $M(G)$. Then, $X$ can be interpreted as an xcutout $X'$ of a planar map $M(G')$
and as a ycutout $X''$ of a planar map $M(G'')$, where $G'$ and $G''$ are $\Pi$-graphs. If at least one of $G'$ and $G''$ is 3-edge-connected, then we say that $G$ is {\it toroidally 3-edge connected}. 
\end{definition}

\begin{figure}[htp]
\includegraphics[scale=0.57]{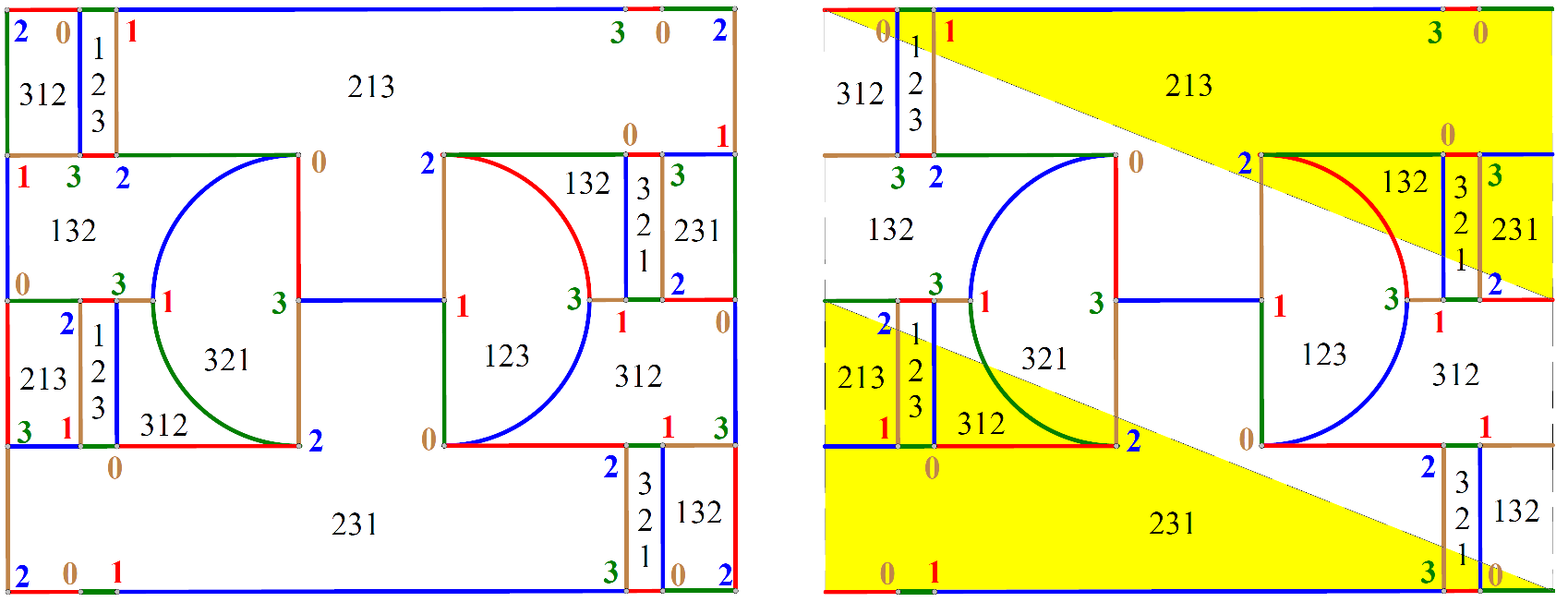}
\caption{Illustration for Example~\ref{klein} and Definition~\ref{bicu}. Colors as in (\ref{colors}).}
\label{kk}
\end{figure}

\begin{conjecture}\label{toroid}
Let $M(G)$ be a toroidal map whose 1-skeleton is a connected simple cubic $\Pi$-graph of girth 4. If $G$ is toroidally 3-edge connected, then $M(G)$ admits an ETCG.
\end{conjecture}

\section{Piercing}\label{p}

Recall that a graph has genus $g$ if it can be embedded in an orientable surface of genus $g$ but not in any orientable surface of genus less than $g$ .
An additional tool that we call {\it piercing}, whose objective is yielding a connected simple cubic graph of girth 4 and every genus $g$ having an ETGC $\mu'$ is illustrated 
on the left of Figure~\ref{20-gray} for a toroidal graph $G_1$ obtained from a 6-cutout $X_1$ by identifying each of the three pairs of opposite sides by parallel translation,
where a PFC $\mu$ equivalent to the shown RETGC $\mu'$ as in Subsection~\ref{face} is indicated. On the right of the figure, a representation of $G_1$ with the same $\mu'$ is given in a bicutout $\Upsilon$, however with its face belts not respecting strictly (\ref{reverse}) for $x=d$, so that no PFC is available in this disposition. 

\begin{figure}[htp]
\includegraphics[scale=0.57]{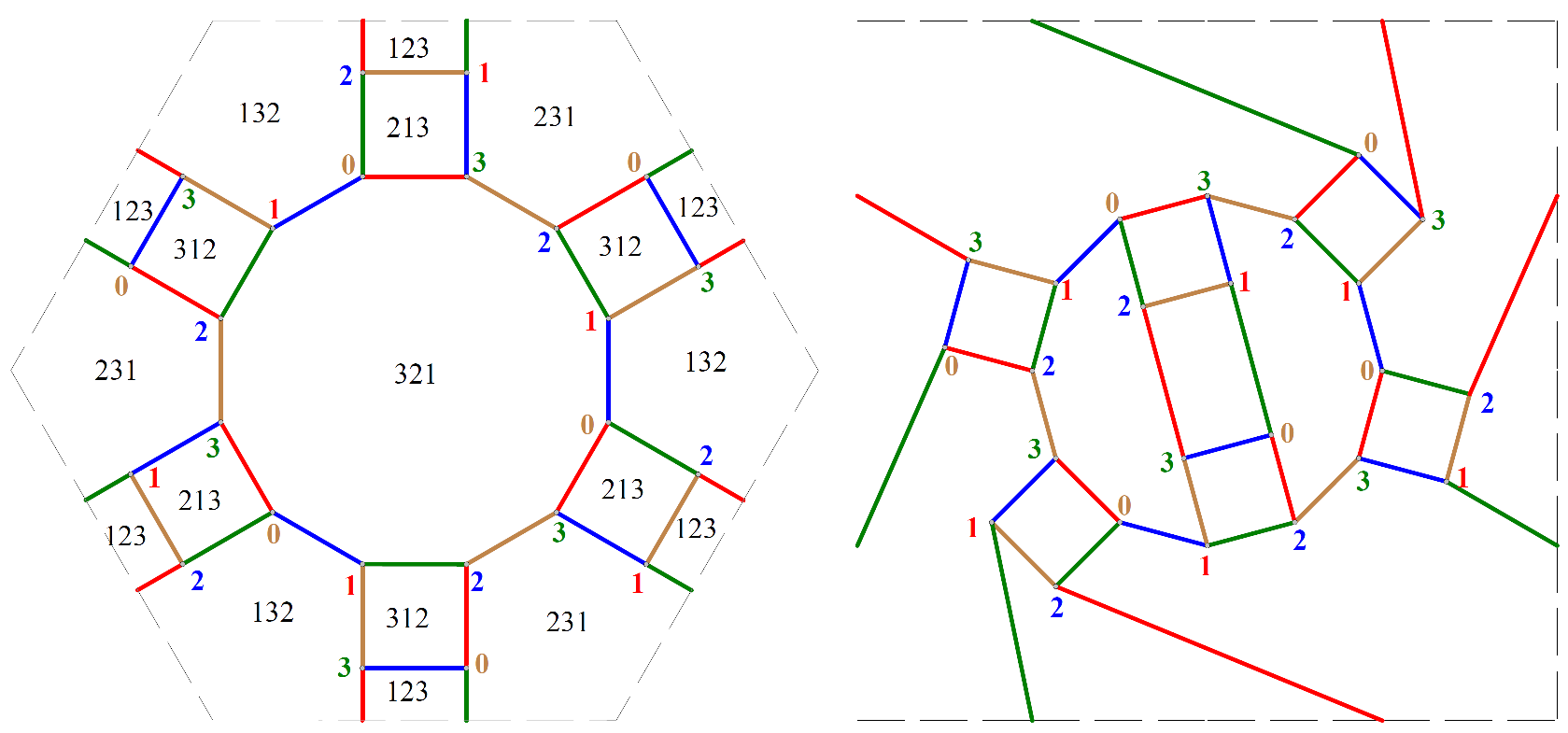}
\caption{Illustration for Section~\ref{p}.  Colors as in (\ref{colors}).}
\label{20-gray}
\end{figure}

\begin{theorem}
There exists a family of $\Pi$-maps $M(G_i)$ of connected simple cubic graphs $G_i$ of girth 4 and genus $i\in\{1,2,\ldots\}\subset\mathbb{Z}$ with ETGCs $\mu'=\mu'(G_i)$  and associated  PFCs $\mu=\mu(M(G_i))$. Each such $M(G_i)$ is obtained by identifying opposite sides of a $(2i+1)$-cutout $X_i$ and has three $(8i+4)$-faces $F^i_0,F^i_1$ and $F^i_2$ and $6i+3$ 4-faces. Of these $6i+3$ 4-faces, $4i+2$ are adjacent, each, to each of the three $(8i+4)$-faces, while each of the remaining $2i+1$ 4-faces is adjacent on a pair of opposite edges to $F^i_1$ and $F^i_2$ and on the other pair of opposite edges to two 4-belts in the first $4i+2$ 4-belts. 
\end{theorem}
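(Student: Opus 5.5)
The proof will be an explicit construction by induction on $i$, starting from the toroidal map $M(G_1)$ given by the 6-cutout $X_1$ on the left of Figure~\ref{20-gray}. Its counts match the statement for $i=1$: three $12$-faces and nine 4-faces. Six of the 4-faces touch each of the three $12$-faces, and three 4-faces touch $F^1_1,F^1_2$ on opposite edges. A PFC $\mu$ equivalent to the shown RETGC $\mu'$ is already exhibited there.

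\textbf{Inductive step.} To pass from $X_i$ to $X_{i+1}$, I will \emph{pierce}: cut $X_i$ along a segment and insert a new strip. Concretely, I will add two new pairs of opposite sides, so a $(2i+1)$-cutout becomes a $(2i+3)$-cutout. The strip is a copy of the repeating unit of $X_1$, consisting of four 4-faces adjacent to all three large faces and two 4-faces of the second kind. Each of $F^i_0,F^i_1,F^i_2$ then gains 8 edges, giving $8(i+1)+4$. This yields $6(i+1)+3$ 4-faces, split as $4(i+1)+2$ and $2(i+1)+1$ in the required adjacency pattern.

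\textbf{Vertex and edge count.} Each 4-face of the first kind uses one edge on each large face plus one more. Counting incidences gives $|E|=(3(8i+4)+4(6i+3))/2=36i+18$ and $|V|=24i+12$, so $|F|=6i+6$. Then $\chi=|V|-|E|+|F|=24i+12-36i-18+6i+6=-6i+6$. This is inconsistent with $2-2i$, so my first incidence count must be adjusted: the large faces likely also share edges among themselves. I will recount directly from $X_1$ by reading its face list. The aim is to fix the numbers so that $\chi(M(G_i))=2-2i$, which is the genus claim once minimality is addressed. Definition~\ref{cutout} already gives genus $\lfloor(2i+1)/2\rfloor=i$ for a $(2i+1)$-cutout, so this Euler computation is mainly a consistency check.

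\textbf{Colorings.} All belt lengths are $\equiv 0 \bmod 4$, and the inserted strip is a copy of a periodic block of $X_1$. I will therefore propagate the same vertex colors periodically along each large face, with color cycle $(a,b,c,d)^{2i+1}$ as in Definition~\ref{bicudo}. This gives a $\Pi$-map, and spray (Theorem~\ref{tt}) with $x=d$ gives a periodic RETGC. Theorem~\ref{fifo} and Theorem~\ref{lapsus} then produce the associated PFC. Locally nothing changes around the inserted vertices, since every new vertex sees the same neighborhood pattern as a vertex of $X_1$. So the efficient domination condition checked on $X_1$ persists.

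\textbf{Main obstacle.} The hard step is showing that $G_i$ genuinely has genus $i$, not merely that it embeds in genus $i$. For this I would try an argument that any embedding of a cubic girth-4 graph with these face structures needs few faces. If that fails, I would instead use the Y-map structure to argue that the three long faces are forced in any map of $G_i$, or else assume the genus-realizing convention of Definition~\ref{belt}.
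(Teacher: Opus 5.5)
Your strip-insertion induction differs from the paper's route, and as written it has concrete gaps. The paper does not induct on $i$. For each $i$ it builds $X_i$ directly as a regular $(4i+2)$-gon containing:
\begin{itemize}
\item a central $(8i+4)$-belt $C_0^i$;
\item a square $B_j$ on each alternate edge $e_j$;
\item for each side pair $L_j\equiv L_j^{-1}$, a copy of $P_4\square K_2$ joining $B_j$ to the antipodal $B_{j+2i+1}$, with its middle square straddling that side pair.
\end{itemize}
$F_1^i$ and $F_2^i$ are the two remaining faces. The PFC is then written down uniformly in $i$:
\begin{itemize}
\item $F_0^i\mapsto 321$, with vertex cycle $(0,3,2,1)^{2i+1}$;
\item $F_1^i\mapsto 132$ and $F_2^i\mapsto 231$;
\item every middle square $\mapsto 123$;
\item the two $P_4$'s of each ladder get vertex colors $0,2,3,1$ and $3,1,0,2$.
\end{itemize}
Because this description is uniform, every local check is the same for all $i$. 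Your induction could reach the same maps, but it needs the fixes below.

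\textbf{Euler count.} Your count is an arithmetic slip, and your diagnosis of it is wrong. From $2|E|=3(8i+4)+4(6i+3)=48i+24$ you get $|E|=24i+12$, not $36i+18$. Hence $|V|=16i+8$, $|F|=6i+6$ and $\chi=2-2i$. These are exactly the paper's values $8(2i+1)$, $10(2i+1)+4i+2$ and $3(2i+1)+3$. Nothing needs adjusting. Edges shared between large faces are already counted, since $2|E|=\sum_F\ell(F)$ holds for every cellular embedding. As written, you would be ``repairing'' a face list that is actually correct.

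\textbf{Inductive step.} The step is too vague to verify.
\begin{itemize}
\item The inserted block is two copies of the $P_4\square K_2$ unit. Since $X_1$ consists of three such units, it is not ``the repeating unit of $X_1$''.
\item Its four new spokes must be placed in pairs at two antipodal places on $C_0^i$. Then the pairing $j\leftrightarrow j+2i+1$ becomes $j\leftrightarrow j+2i+3$ across the two new side pairs.
\item You must check that $F_1,F_2$ stay single $(8i+4)$-faces. $F_1^i$ runs through the edges $v_ju_{j+1}$ of $C_0^i$ with $j$ odd. Consecutive such edges are linked by $3$-edge rails that advance $j$ by $2i+2$. Since $\gcd(2i+2,4i+2)=2$, this traces a single cycle.
\item Theorem~\ref{tt} only says spray \emph{may} yield an ETGC, so you cannot cite it to produce one.
\item Your real justification is the local-isomorphism argument. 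It requires the new spokes to be colored so as to continue the period $(0,3,2,1)$ on $C_0^i$.
\end{itemize}

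\textbf{Genus.} The paper proves nothing beyond $\chi(M(G_i))=2-2i$ for this particular embedding. In effect it adopts the genus-realizing convention of Definition~\ref{belt}, which is your fallback. The strategies you sketch would not give more. Face-length counting with girth $4$ yields only $g\ge -2i$. Faces are also not invariants of $G_i$ across different embeddings, so the three long faces cannot be shown to be ``forced''.
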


\begin{proof} Let $i\ge 1$. Let the $(2i+1)$-cutout $X_i=(L_1,L_2,\ldots,L_{2i+1},L_1^{-1},L_2^{-1},\ldots,L_{2i+1}^{-1})$, namely a $(4i+2)$-zonogon according to Definition~\ref{cutout}, be given by a regular $(4i+2)$-polygon. Then, $G_i$ is represented in $X_i$ starting with a central $(8i+4)$-belt $C_0^i$ realized as a regular $(8i+4)$-polygon in $X_i$ with alternate edges $e_1,\ldots,e_{4j+2}$ parallel to the corresponding sides $L_1,\ldots,L_{2i+1}^{-1}$ of $X_i$, that is: $e_j\parallel L_j\parallel e_{j+2i+1}\parallel L_j^{-1}$, $\forall j=1,\ldots, 2i+1$. Let us place a square 4-belt $B_j$ outside $C_0^i$ in $X_i$ that shares $e_j$ with $C_0^i$, $\forall j=1,\ldots,4i+2$. Let us make the disjoint union $B_j\cup B_{j+2i+1}$ of 4-belts be a subgraph of a copy $\bar{B}_j=B_j\cup B_{j,j+2i+1}\cup B_{j+2i+1}$ of $P_4\square K_2$ with middle 4-belt $B_{j,j+2i+1}$ straddling the sides cancellation segment $L_j\equiv L_j^{-1}$ in the orientable surface $T^i$ of genus $i$ obtained from $X^i$ by identifications as in Definition~\ref{cutout} for $n=4i+2$. Let $e_i=(u_i,v_i)$ have end-vertices $u_i$ and $v_i$, for $i=1,\ldots,4i+2$, so that $C_0^i=(u_1,v_1,u_2,v_2,\ldots,u_{4i+2},v_{4i+2})$. 
Beside the face $F^i_0$ of $(8i+4)$-belt  $C_0^i$, there is a face $F^i_1$ with $(8i+4)$-belt $C^i_1=(v_1,u_2,w_2,z_{2+4i+2},v_{2+4i+2},u_{3+4i+2},\ldots,z_1)$  and a face $F^i_2$ with $(8i+4)$-belt $C^i_2=(v_3,u_3,w_4,z_{4+4i+2},v_{4+4i+2},u_{5+4i+2},\ldots,z_3)$, where $u_j$ and $v_j$ are adjacent to vertices $w_j$ and $z_j$ of $B_j\cup B_{j,j+2i+1}$ and $B_{j,j+2i+1}\cup B_{j+2i+1}$, respectively.
Each of the $2i+1$ copies $\bar{B}_j$ of $P_4\square K_2$ has 8 vertices, 10 edges and 3 faces. In addition, $M(G_i)$ has $4j+2$ edges (those in $(F^i_0\cap F^i_1)\cup(F^i_0\cap F^i_2)$. Thus, $|V(G)|=8(2i+1)$, $|E(G)|=10(2i+1)+4i+2$, $|F(M(G))|=3(2i+1)+3$, so the Euler characteristic of $M(G_i)$ is $$\chi(M(G_i))=(8(2i+1))-(10(2i+1)+4i+2)+(3(2i+1)+3)=-2i-1+3=2-2i$$ and the genus of $M(G_i)$ is $g(M(G_i))=i$. 
An associated PFC $\mu$ as in Definition~\ref{pipi} is given as follows: the face $F^i_0$ of belt $C_0^i$ is assigned 3-permutation 321, and its $(8i+4)$-belt is assigned associated clockwise color cycle $(0\,_-^1\,3\,_-^0\,2\,_-^3\,1\,_-^2)^{2i+1}$, where opposite pairs of edges with colors $(1\,_-^2\,0)$ and $(3\,_-^0\,2)$ separate $F^i_0$ from the faces $F^i_1$ and $F^i_2$. According to Subsection~\ref{face}, these faces receive 3-permutation colors 132 and 231, respectively. This way, each copy $\bar{B_j}$ of $P_4\square K_2$ has central 4-belt face (separated in halves by the sides cancellation segment $L_j\equiv L_j^{-1}$ in the border of $X_i$) with 3-permutation 123, and the copies of $P_4$ delimiting each of the $(4i+2)$ copies of $P_4\square K_2$ get color paths $(0\,_-^3\,2\,_-^1\,3\,_-^0\,1)$ and $(3\,_-^2\,1\,_-^3\,0\,_-^1\,2)$, respectively for $F^i_1$ and $F^i_2$.
\end{proof}

\section{PDS-complementation}\label{Messi}

\begin{definition}\label{abelardo} The {\it integer lattice graph} $\Gamma$ has vertex set $V(\Gamma)=\mathbb{Z}^2$ and is such that any two members of $V(\Gamma)$ are adjacent if and only if their Euclidean distance is 1. 
\end{definition}

\begin{theorem}\label{assume}
Consider a PDS $\Sigma$ of $\Gamma$ and let $\Lambda=\Gamma\setminus\Sigma$ be the complementary graph of $\Sigma$ in $\Gamma$. Let $M(\Lambda)$ be the planar map of $\Lambda$ induced by the standard embedding  of $\Gamma$ in $\mathbb{R}^2$. Assume that all belts of $\Lambda$ have lengths divisible by 4. In other words, assume that the induced components of $\Gamma\setminus\Lambda$ have vertex sets only of odd cardinalities. Under such assumptions, $\Lambda$ has DETGCs and RETGCs that yield corresponding PFCs of $M(\Lambda)$.
\end{theorem}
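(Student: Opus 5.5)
We build the ETGC directly from a coordinate formula on $\mathbb{Z}^2$. Everything else in the statement then follows from the earlier results.

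\textbf{Step 1: structure of $\Lambda$.} Since $\Sigma$ is a PDS of $\Gamma$, each vertex of $\Lambda=\Gamma\setminus\Sigma$ has exactly one neighbor in $\Sigma$. So it keeps exactly three of its four lattice neighbors, and $\Lambda$ is cubic. Its girth is 4, because $\Gamma$ is bipartite and its unit squares not meeting $\Sigma$ survive. The faces of $M(\Lambda)$ are of two kinds:
\begin{itemize}
\item the unit squares of $\Gamma$ avoiding $\Sigma$, which are 4-belts;
\item for each component $K$ of $\Gamma[\Sigma]$, the face obtained by merging the unit squares touching $K$.
\end{itemize}
Its belt is the boundary walk around $K$ through the neighbors of $K$. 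These neighbors lie in $\Lambda$, and each has a unique neighbor in $K$.

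I would first check the translation stated in the theorem: this boundary length is divisible by 4 exactly when $|V(K)|$ is odd. For a lattice tree, which is forced if the belt is to be a cycle, the length equals $2|V(K)|+2$. Indeed, each vertex of $K$ contributes its free lattice directions as boundary vertices, together with one corner per turn. So $2|V(K)|+2\equiv 0 \pmod 4$ if and only if $|V(K)|$ is odd.

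\textbf{Step 2: explicit vertex coloring.} Define $\mu_V(x,y)=(x+2y)\bmod 4$, or a variant such as $(x-2y)\bmod 4$, adjusted to $\Sigma$. This coloring is periodic along horizontal lines. A vertical step changes the color by 2, so each unit square reads $(a,a+1,a+3,a+2)$ up to rotation, and this has the pattern $(a,b,c,d)$ with all colors distinct.

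The key claim is that on the boundary walk of each merged face, the successive colors again advance cyclically with period 4. The parity condition on $|V(K)|$ is exactly what makes the walk close up consistently. Granting this, each belt carries a vertex color cycle $(a,b,c,d)^{\ell/4}$. Hence $M(\Lambda)$ is a $\Pi$-map in the sense of Definition~\ref{bicudo}.

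This step is the main obstacle. The naive formula need not be compatible across the turns of the boundary walk around an arbitrary odd tree $K$. If it fails, the fallback is the spray propagation of Theorem~\ref{tt}, applied along an ordering $\mathcal{B}$ of the belts as in its proof. One then shows by induction that the forced choice $x=d$ in (\ref{reverse}) never conflicts. The argument would go through the local configurations at the corners of $K$, namely leaves, straight segments and bends, using oddness to control the accumulated phase shift.

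\textbf{Step 3: conclude.} Once a periodic vertex coloring of the $\Pi$-map $M(\Lambda)$ is available, the Lemma following Definition~\ref{opto} supplies the two mutually orthogonal edge colorings $\mu^D,\mu^R$. These give a DETGC $\mu_V^D$ and an RETGC $\mu_V^R$. The Proposition after Example~\ref{contra2} shows that both are periodic. Theorem~\ref{fifo} and Theorem~\ref{lapsus} then yield the corresponding PFCs of $M(\Lambda)$.

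Although $\Lambda$ is infinite, all of these arguments are local: they are belt-by-belt and vertex-by-vertex. Finiteness is therefore not needed, and the efficient-domination checks in the proof of Theorem~\ref{lapsus} apply verbatim.
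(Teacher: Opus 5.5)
There is a genuine gap, and it is the step you flag yourself: Step~2 carries the entire content of the theorem and is not proved. Worse, the explicit formula already fails in the basic case. With $\mu_V(x,y)=(x+2y)\bmod 4$, the two vertical neighbors of a vertex of color $c$ both get $c+2\equiv c-2$. So any vertex of $\Lambda$ whose unique $\Sigma$-neighbor is horizontal sees a repeated color in its closed neighborhood. Take the EDS $\Sigma=\{x+2y\equiv 0 \pmod 5\}$, the leftmost case of Figure~\ref{abel}. The vertex $(1,0)$ loses $(0,0)$ and keeps $(1,1)$ and $(1,-1)$, both colored $3$, and color $0$ is missing. The 8-belt around $(0,0)$ reads $1,2,3,1,3,2,1,3$.

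No choice of $\alpha x+\beta y+\gamma \pmod 4$ can fix this. In the EDS case each of the four directions occurs as the deleted one, so any two directions appear together among the three kept neighbors of some vertex. Hence the four offsets $\pm\alpha,\pm\beta$ would have to be pairwise distinct and nonzero mod $4$, which is impossible. The coloring must therefore depend on $\Sigma$, and ``adjusted to $\Sigma$'' is where all the work lies. Your fallback, spraying along an ordering of belts and then ``showing by induction that $x=d$ never conflicts,'' only restates what must be proved. The difficulty is global consistency when spray comes back to already-colored belts around a closed loop of faces. Your remarks about leaves, segments and bends give no invariant that guarantees it.

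The paper does not use a formula. It runs spray (Theorem~\ref{tt}) along an ordering of belts with $x=d$. The step meant to secure consistency is a reduction: folding (inverse unfolding, Definition~\ref{unfold}) the clusters of $M(\Lambda)$ brings every case down to the leftmost configuration of Figure~\ref{abel}. That is the truncated square tiling of Example~\ref{truncated}, where a DETGC, an RETGC and the PFC are exhibited explicitly. Your proposal lacks that reduction, or a substitute such as a coloring built from the block structure of $\Sigma$. Step~3 is fine relative to the paper's earlier results once Step~2 is in place.

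Separately, your Step~1 check is wrong in two places.
\begin{itemize}
\item Components of a PDS of $\Gamma$ are not forced to be trees; they are rectangles $P_m\square P_n$. An L-turn would give its inner corner two $\Sigma$-neighbors, and the paper's own rightmost example uses $P_3\square P_3$.
\item The belt around such a component has length $2(m+n)+4$. For a single vertex this is $8$, not $2|V(K)|+2=4$.
\end{itemize}
So divisibility by $4$ means $m\equiv n\pmod 2$, which is weaker than $mn$ odd. For instance, translates of a $2\times 2$ block by the lattice spanned by $(4,0)$ and $(2,3)$ form a PDS whose complement has only $4$- and $12$-belts. The ``in other words'' clause is therefore not an equivalence, and your Step~2 should not lean on oddness of $|V(K)|$.
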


\begin{proof}
Propagation of spray (Definition~\ref{sabado}) acts as an algorithm on $M(\Lambda)$ through an ordering of its belts, where each subsequent belt is adjacent to an earlier treated belt. Then, application of Theorem~\ref{tt} is insured with all instances of  (\ref{reverse}) having $x=d$. But observe that all clusters in $M(\Lambda)$ can use reversed unfolding, or {\it folding}, to reduce $M(\Lambda)$ to the simplest case of PDS-complementation, namely the leftmost case in Figure~\ref{abel}, that coincides with that of Example~\ref{truncated}. This establishes our claims.
\end{proof}

\begin{corollary}\label{modorra}
If $M(\Lambda)$ is periodic both horizontally and vertically in $\mathbb{R}^2$, then $\mathbb{R}^2$ is partitioned into bicutouts given by parallelograms whose sides concatenate into equidistant lines in each of  two directions. Such lines have intersections yielding a plane lattice. By identifying the opposite sides of one such bicutout, a toroidal quotient $M(G)$ of a connected simple cubic graph $G$ of girth 4 is obtained that inherits DETGCs and their orthogonal RETGCs from those in $M(\Lambda)$.
Moreover, these ETGCs  provide corresponding PFCs.
\end{corollary}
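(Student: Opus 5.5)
The plan has three stages: pass from the periodic plane map to a torus via its translation lattice, push the colorings down through the covering map, and then apply the face-coloring results to the quotient.

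\textbf{Lattice and bicutout.} Horizontal and vertical periodicity of $M(\Lambda)$ give two linearly independent translation vectors $t_1,t_2\in\mathbb{Z}^2$ that preserve $\Sigma$, and hence $\Lambda$ and its embedding. The group $L=\langle t_1,t_2\rangle$ is then a plane lattice. Its translates of a fundamental parallelogram $P$ (sides parallel to $t_1,t_2$) tile $\mathbb{R}^2$, and the sides of these translates concatenate into the two families of equidistant lines. I would choose $P$ with corners off $V(\Lambda)$ and sides crossing edges transversally, perturbing if needed. Identifying opposite sides of $P$ gives $M(G)$ with $G=\Lambda/L$, i.e.\ $M(\Lambda)\to M(G)$ is the covering $\mathbb{R}^2\to\mathbb{R}^2/L$. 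This makes $G$ cubic and connected, since $\Lambda$ is connected and cubic by the PDS condition on the odd components of $\Gamma\setminus\Lambda$. The quotient is a $2$-cell embedding on the torus, so $\chi=0$.

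\textbf{Pushing the colorings down.} By Theorem~\ref{assume}, $M(\Lambda)$ carries a DETGC $\mu'_D$ and an RETGC $\mu'_R$; these are mutually orthogonal in the sense of Corollary~\ref{under}. I want them to descend to $G$. Here I would exploit that the spray construction in the proof of Theorem~\ref{assume} is periodic. Belt vertex colors follow $(a,b,c,d)^{\ell/4}$ with $x=d$ throughout, and spray is deterministic once the initial closed neighborhood is fixed. So $\mu'_D\circ t_k$ is again a DETGC agreeing with $\mu'_D$ up to a permutation $\pi_k$ of $[3]_0$. If $\pi_k\neq\mathrm{id}$, I replace $t_k$ by a multiple $n_kt_k$, where $n_k$ is the order of $\pi_k$ (dividing $4$ or $6$); this just enlarges the bicutout. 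After this, $\mu'_D$ and $\mu'_R$ are $L$-invariant. Both ETGC conditions and the directed/reversed property are local, so they hold on $G$ and descend. Orthogonality also descends, since it is a pointwise edge condition.

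\textbf{PFCs and the remaining checks.} Theorems~\ref{fifo} and~\ref{lapsus} then give the corresponding PFCs on $M(G)$; equivalently, these are the $L$-invariant PFCs of $M(\Lambda)$ pushed down. Finally, girth $4$ and simplicity in $G$ must be checked. Short cycles, loops or parallel edges could be created only if $L$ contains small vectors, and passing to a sublattice ($2t_1,2t_2$) removes this. The main obstacle is the invariance step, i.e.\ showing that translation acts on the colorings by a global color permutation rather than in some incoherent way. I would argue it via uniqueness of spray propagation given one seed neighborhood, together with connectivity of $\Lambda$.
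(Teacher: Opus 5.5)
Your proposal is correct. It follows the route the paper leaves implicit: the paper states Corollary~\ref{modorra} without proof, as an immediate consequence of Theorem~\ref{assume}. That route is to quotient $M(\Lambda)$ by a period lattice, push the DETGC and its orthogonal RETGC down through the covering, and read off the PFCs via Theorems~\ref{fifo} and~\ref{lapsus}.

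Your extra step is that each period translation acts on the colouring by a global permutation of $[3]_0$, which forces passage to a finite-index sublattice, i.e.\ a larger bicutout. The paper omits this detail, but it is genuinely needed. The colouring is in general not invariant under the full period lattice of $\Sigma$; compare the 16-vertex fundamental region in the paper's EDS example.
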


\begin{figure}[htp]
\includegraphics[scale=0.57]{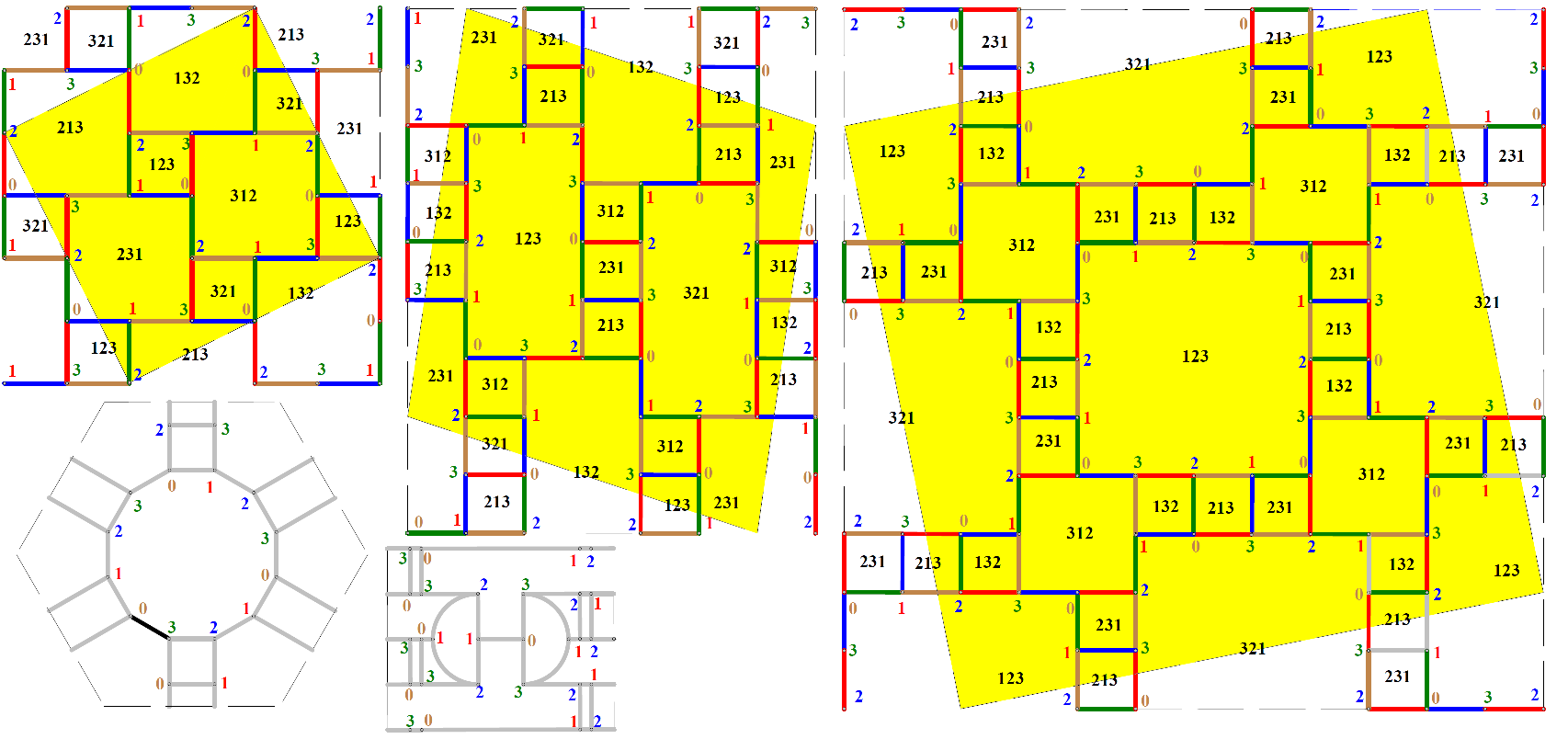}
\caption{Three DETGCs via perfect dominating sets and a non-$\Pi$-map}
\label{abel}
\end{figure}

\begin{example}
Figure~\ref{abel} contains, starting at its top, three examples of Corollary~\ref{modorra}. The leftmost example arises from a PDS whose induced components are isolated vertices, so it is a EDS. A yellow fundamental region is indicated yielding by identification of opposite sides a toroidal map with 16 vertices, 24 edges, four 8-belts and four 4-belts, so
$|V(G)|=16, |E(G)|=24, |F(M(G))|=8$ and $\chi(M(G)))=16-24+8=0$, so $g(M(G))=1$. This example is equivalent to that of Example~\ref{trunc0}.
The middle example arises from a PDS whose induced components are vertical copies of $P_3$. However, extensions as in Section~\ref{extensions} reduce this case to the leftmost one.
The rightmost example arises from a PDS whose induced components are copies of $P_3\square P_3$ interspersed with isolated vertices. It has 64 vertices, 96 edges, four 16-belts, four 8-belts and 24 4-belts, so $|V(G)|=64, |E(G)|=96, |F(M(G))|=32$ and $\chi(M(G)))=64-96+32=0$, so $g(M(G))=1$.
\end{example}

\section{Concluding remarks and conjectures}\label{toroidal}

\begin{remark}\label{remark}
The leftmost cutout $X$ in  (\ref{tess}) is related to the middle cutout in  (\ref{octaedro}) as follows. The leftmost vertical path of $X$ equals the rightmost vertical path and is joined with the second leftmost path by two horizontal edges colored $0\hspace*{2.2mm}^1_-\hspace*{2.2mm}2$ and $2\hspace*{2.2mm}^1_-\hspace*{2.2mm}0$. We apply {\it exchange} by replacing those two edges by the other two edges forming an auxiliary square $K_2\square K_2$ of $X$ in the Euclidean plane but resulting in a cutout $X'$ equivalent to the middle cutout $X''$ in (\ref{octaedro}) (with the leftmost vertical path $[1_-^32_-^10_-^23_-^01]^t$ of $X''$ obtained as the second leftmost vertical path $[2_-^10_-^23_-^01_-^32]^t$ of $X'$) of a planar graph by the now apparently separated modified leftmost path $[0_-^12_-^31_-^03_-^20]^t$ of $X'$, since it is already present as the  now modified rightmost path of $X'$. 

Similar exchanges were given in the right-side cutout of (\ref{octaedro}), where horizontal-edge pairs indicated by diaeresis pairs are to be replaced by the other two edges in the auxiliary squares $K_2\square K_2$ shown to the right. 
\end{remark}   

\begin{example}\label{sondos}
The left case in  (\ref{te}) is obtained by setting $Q_3$ drawn as the 1-skeleton of a toroidal $\Pi$-map via a bicutout $X$ of $Q_3$.  
Note the two shown 8-cycles in such $X$ are not chordless in $Q_3$. 
However, vertically stacked extensions of such $X$ represent graphs that as 1-skeletons of corresponding toroidal $\Pi$-maps have all their belts as chordless 4- and  8-cycles, as is the case depicted on the right side of the figure, with just one stacked extension of $X$. By the way, the coloring on the toroidal $\Pi$-map obtained from the bicutout $Y$ on this right side of (\ref{te}) is an ETGC, but does not have an associated PFC, since the four 8-belts do not respect (\ref{reverse}) with $x=d$. Horizontal and vertical concatenations of $Y$ still yield cubic $\Pi$-maps with ETGCs not extensible to 3-permutation colorings. However, this graph can be drawn to have the $\Pi$-map presented in Example~\ref{truncated} and the left of (\ref{tess}).
\end{example}

\begin{eqnarray}\label{te}\begin{array}{ll|ll}
0\hspace*{2.2mm}_-^3\hspace*{2.2mm}1\hspace*{2.2mm}_-^0\hspace*{2.2mm}2\hspace*{2.2mm}_-^1\hspace*{2.2mm}3\hspace*{2.2mm}_-^2\hspace*{2.2mm}0&&&
0\hspace*{2.2mm}_-^3\hspace*{2.2mm}1\hspace*{2.2mm}_-^0\hspace*{2.2mm}2\hspace*{2.2mm}_-^1\hspace*{2.2mm}3\hspace*{2.2mm}_-^2\hspace*{2.2mm}0\\
\!_1|\hspace*{6mm}_2|\hspace*{23.5mm}_1|&&&
\!_1|\hspace*{6mm}_2|\hspace*{23.5mm}_1|\\
2\hspace*{2.2mm}_-^0\hspace*{2.2mm}3\hspace*{2.2mm}_-^1\hspace*{2.2mm}0\hspace*{2.2mm}_-^2\hspace*{2.2mm}1\hspace*{2.2mm}_-^3\hspace*{2.2mm}2&&&
2\hspace*{2.2mm}_-^0\hspace*{2.2mm}3\hspace*{2.2mm}_-^1\hspace*{2.2mm}0\hspace*{2.2mm}_-^2\hspace*{2.2mm}1\hspace*{2.2mm}_-^3\hspace*{2.2mm}2\\
:\hspace*{15mm}_3|\hspace*{6mm}_0|\hspace*{6mm}:&&&
:\hspace*{15mm}_3|\hspace*{6mm}_0|\hspace*{7.5mm}:\\
0\hspace*{2.2mm}_-^3\hspace*{2.2mm}1\hspace*{2.2mm}_-^0\hspace*{2.2mm}2\hspace*{2.2mm}_-^1\hspace*{2.2mm}3\hspace*{2.2mm}_-^2\hspace*{2.2mm}0&&&
0\hspace*{2.2mm}_-^3\hspace*{2.2mm}1\hspace*{2.2mm}_-^0\hspace*{2.2mm}2\hspace*{2.2mm}_-^1\hspace*{2.2mm}3\hspace*{2.2mm}_-^2\hspace*{2.2mm}0\\
&&&\!_1|\hspace*{6mm}_2|\hspace*{23.5mm}_1|\\
&&&2\hspace*{2.2mm}_-^0\hspace*{2.2mm}3\hspace*{2.2mm}_-^1\hspace*{2.2mm}0\hspace*{2.2mm}_-^2\hspace*{2.2mm}1\hspace*{2.2mm}_-^3\hspace*{2.2mm}2\\
&&&:\hspace*{15mm}_3|\hspace*{6mm}_0|\hspace*{7mm}:\\
&&&0\hspace*{2.2mm}_-^3\hspace*{2.2mm}1\hspace*{2.2mm}_-^0\hspace*{2.2mm}2\hspace*{2.2mm}_-^1\hspace*{2.2mm}3\hspace*{2.2mm}_-^2\hspace*{2.2mm}0\\
\end{array}\end{eqnarray}

\begin{figure}[htp]
\includegraphics[scale=0.58]{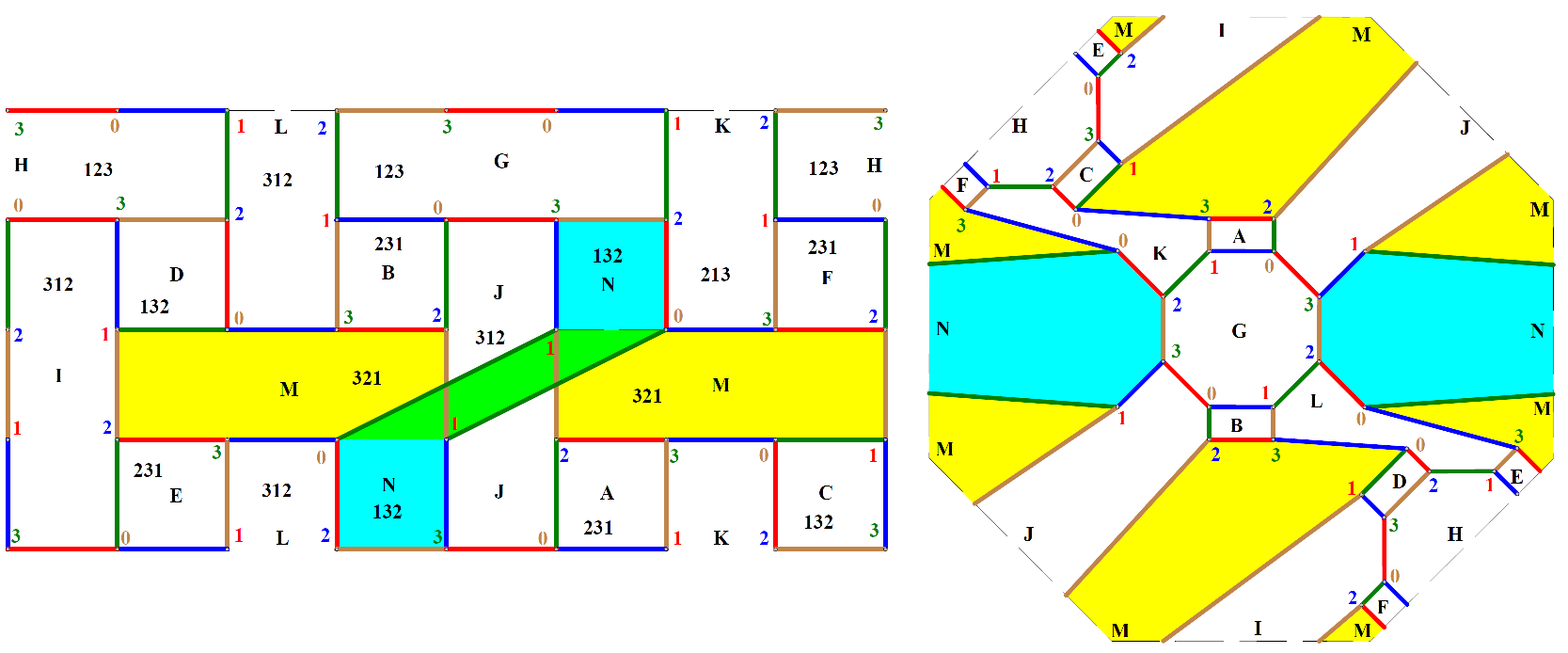}
\caption{Raising genus from 1 to 2 and octagonal cutout for $G^{**}$. Colors as in (\ref{colors}).}
\label{paquita}
\end{figure}

\begin{conjecture}\label{mango}
All connected simple cubic $\Pi$-maps $M(G)$ of girth 4 in genus-realizing orientable surfaces contain DETGCs (resp. RETGCs) with associated PFCs from which the original DETGCs (resp. RETGCs) can be recovered, allowing unique induced EGCs in the prisms $G\square K_2$. 
\end{conjecture} 

\begin{conjecture}\label{con1}  ETGCs of finite connected simple cubic graphs 
$G$ of girth 4 are obtained solely by means of the following seven constructive tools: spray (Definition~\ref{sabado}),
yielding the smallest such $G$, namely $G=Q_3$ (as in Corollary~\ref{under}),
extension (Definition~\ref{pe}),
unfolding (Definition~\ref{unfold}), exchange (Definition~\ref{exchange}), amalgam (Definition~\ref{amalgam}), piercing (Section~\ref{p}) and PDS-complementation (Section~\ref{Messi}). 
\end{conjecture}

\begin{question}\label{mapx}
Can Definition~\ref{bicu} and Conjecture~\ref{toroid} be generalized for the cases of $2g$-cutouts of $g$-toroidal graphs, for a notion of 
{\it $g$-toroidally 3-edge connected simple cubic graph}?
\end{question}

\begin{remark}\label{lastrem} The right side of  Figure~\ref{paquita} shows a regular 8-polygon cutout $\bar{Y}$ of the 2-toroid $\mathbb{T}_2$ representing the graph $G^{**}$ in  Example~\ref{satan}, where the faces of the embedding are distinguished by means of the 14 capital letters from A to N. Notice the RETGC present in $\bar{Y}$.
On the left of the figure, the bicutout $Y^{**}$ on the right side of Figure~\ref{ahiva} is presented with those 14 letters indicating their position in the faces of $G^{**}$ with respect to $\bar{Y}$, where the 10-belts M$'$ and $G'$ with the two crossover green edges of handle $H$ in Example~\ref{satan} are redistributed into the 16-belt $M$ with yellow and (shared) light-green interior and the 8-belt N with light-blue and (shared) light-green interior, where the shared light-green parallelogram corresponds to the handle $H$ with visible top part say in the face of 16-belt M and hidden bottom part in the face of 8-belt N. Notice that $Y^{**}$ still holds here the original DETGC, so that $\bar{Y}$ should be flip, or turn over, in order to transform its RETGC into a DETGC as the one of $Y^{**}$. 
Noting that $G^{**}$ has 32 vertices, which is less than the 40 vertices of the graph $G_2$ on the right side of Figure~\ref{20-gray} in Section~\ref{p}, the following Question~\ref{alfin} is posed. 
\end{remark}

\begin{figure}[htp]
\hspace*{1.6cm}
\includegraphics[scale=0.45]{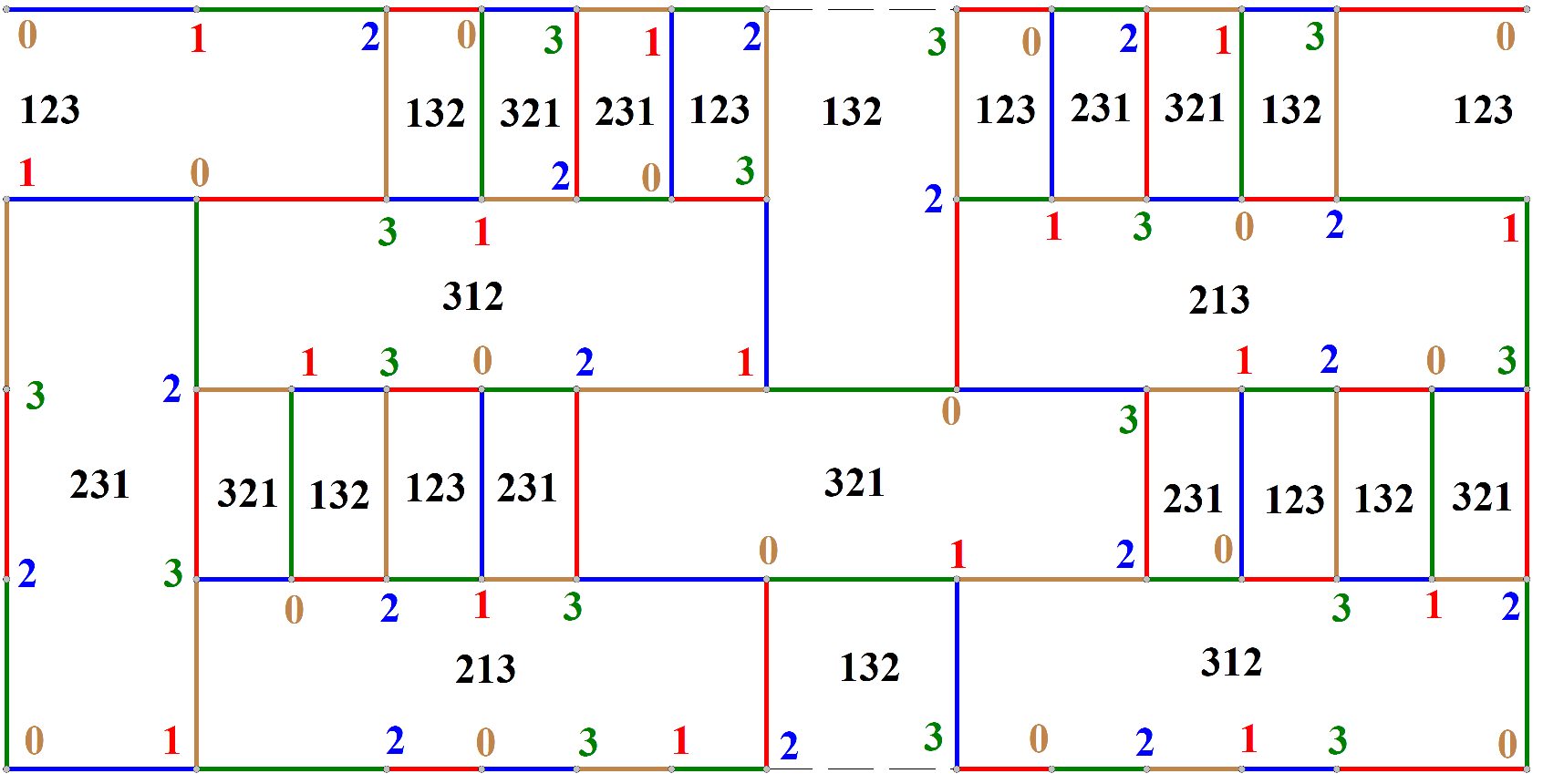}
\caption{Unfolding of $G^{**}$ on the bicutout $Y^{**}$ on the right of Figure~\ref{ahiva}. Colors as in (\ref{colors}).}
\label{esca}
\end{figure}

\begin{question}\label{alfin}
Is $G^{**}$ the smallest cubic graph of girth 4 with regular 8-polygon cutout representation in $\mathbb{T}_2$ and an ETGC? Moreover, we ask for the smallest cubic graph of girth 4 with ETGC embeddable in $\mathbb{T}_g$ via an adequate regular polygon, for any $g>0$.
\end{question}

\begin{remark}\label{observe} There are cubic $\Pi$-maps $M(G)$ of girth 4 with ETGCs in orientable surfaces realizing the genus and that cannot have 3-colorings as in Subsection~\ref{face}, mentioned in the proof of Corollary~\ref{under}, like that  
in the bicutout on the right of Figure~\ref{20-gray}, or the 8-belt $1\,_-^0\,2\,_-^1\,3\,_-^2\,0\,_-^1\;2\,_-^3\,1\,_-^2\,0\,_-^1\,3\,_-^2$ on the right of (\ref{te}), both cases with $x\neq d$, but in a such cases there is a form to obtain a $\Pi$-map of $G$ different from $M(G)$ with an associated 3-permutation coloring. So, are there cubic $\Pi$-maps $M(G)$ of girth 4 with ETGCs that cannot in any way have $G$ embedded in a $\Pi$-map with 3-permutation coloring? 
 \end{remark}

\begin{remark}
The bicutout $Y^{**}$ of the toroidal $\Pi$-map $G^{**}$ on the right of Figure~\ref{ahiva} admits the unfolding shown in Figure~\ref{esca} and yielding, from the toroidal $\Pi$-map $M(G^{**})$, the unfolded toroidal $\Pi$-map $M(G^{**}_u)$ of a cubic graph $G^{**}_u$ of girth 4 that has the ETGC shown in the figure, together with an associated PFC. However, the half cutout $Y^*_1$ (or $Y^*_2$) containing the toroidal $\Pi$-map $M(G^*)$ admits
a similar unfolding, as indicated in the figure, but this does not have an ETGC. Note that this graph is not toroidally 3-edge connected, as in Definition~\ref{toroid}, as is also the case of the left or right half of $Y^{**}$, either half considered as a bicutout itself. However, one such $(4\times 4)$ bicutout yields a toroidal $\Pi$-map with an ETCD and associated PFC by identification along the dashed diagonal of $Y^{**}$t, which is not the case in Figure~\ref{esca} because of the presence of resulting 12-belts instead of 8-belts.   
\end{remark}

\begin{figure}[htp]
\includegraphics[scale=0.57]{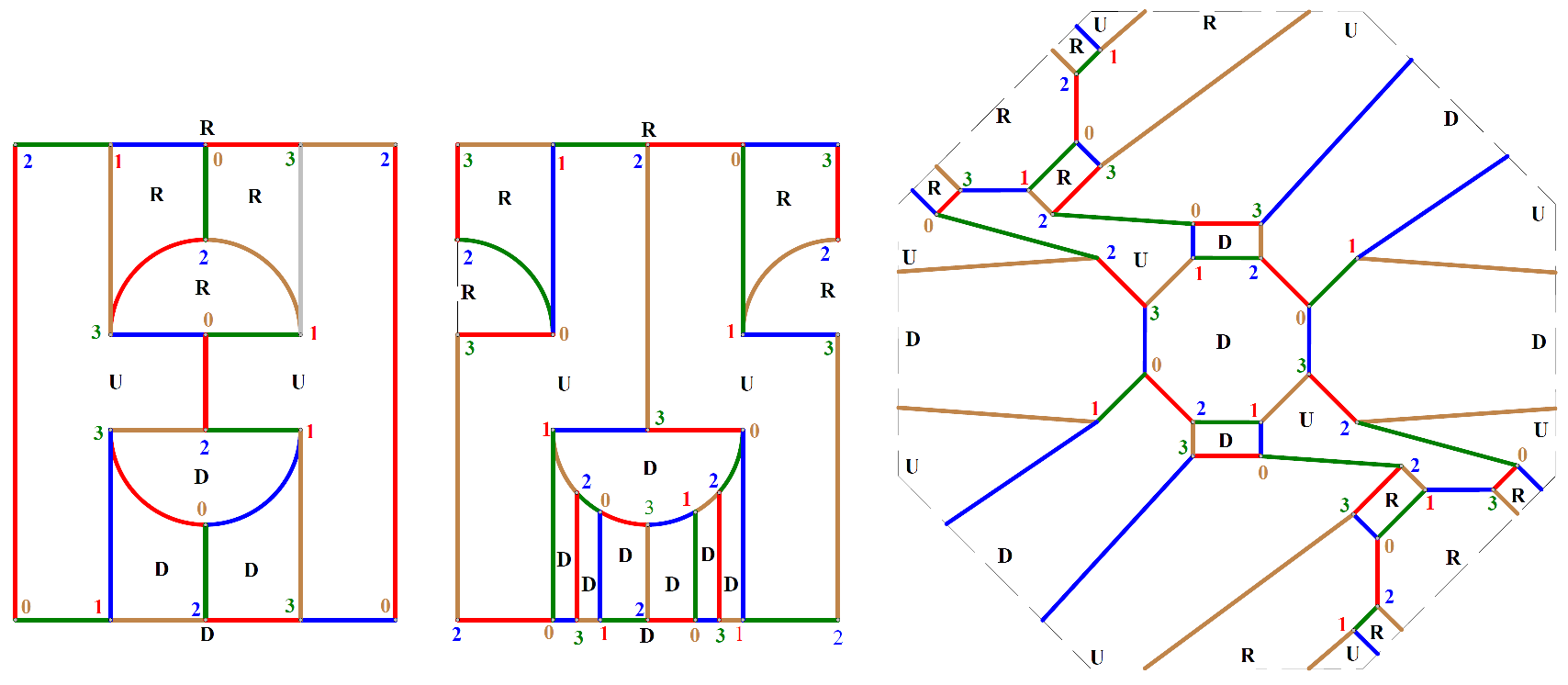}
\caption{Examples of ETGCs not inducing 3-permutation face colorings. Colors as in (\ref{colors}).}
\label{exper}
\end{figure}

\begin{example}\label{negating} Regarding Definition~\ref{opto},
Fiigure~\ref{exper} contains ETGCs in the leftmost and rightmost $\Pi$-maps of Figure~\ref{bluyelow} and in the right $\Pi$-map of Figure~\ref{paquita}. These ETGCs are presented to show that, by not following in their construction that s (\ref{reverse}) and (\ref{biarc}) are always applied with $x=d$, they become 
UETGCs. In fact, the faces of these $\Pi$-maps are labelled with the letters D, R and U according to whether their corresponding belts are directed, reversed or undirected, respectively.
In these UETGCs the subgraph $G_D$ induced by each cluster $D$ and the edges joining $D$ to Y-vertices have the belts enclosed in $G_D$ having a common letter D or R, so their belts are all directed or reverse. These {\it super-clusters} $G_D$ are mutually separated by faces necessarily labelled U. 

\end{example}



\begin{thebibliography}{99}

\bibitem{Araujo} C. Araujo and I. J. Dejter, {\it Lattice-like total perfect codes}, Discussiones Mathematicae Graph Theory, {\bf 34} (2014), 57--74.

\bibitem{Horak} C. Araujo, I. J. Dejter and P. Horak, {\it A generalization of Lee codes}, Des. Codes Cryptogr., {\bf 70} (2014), 77--90.
\bibitem{B1} M. Behzad, {\it Graphs and their chromatic numbers}, PhD thesis, Michigan State University, 1965.

\bibitem{B2} M. Behzad, {\it The total chromatic number}, Proc. Conf. Combin. Math. and Appl. (1969), 1--8. 

\bibitem{BV} V. Boltyanski, H. Martini and P. S. Soltan, Excursions into Combinatorial Geometry, Springer, 1997.

\bibitem{Bondy} J. A. Bondy and U. S. R. Murty, Graph Theory, Springer, 2008.

\bibitem{Mazzu} S. Dantas, C. M. H. de Figueiredo, G. Mazzuocollo, M. Preissmann, V. F. dos Santos, D. Sasaki, {\it On the total coloring of generalized Petersen graphs}, Discrete Math., {\bf 339} (2016), 1471--1475.

\bibitem{+1} I. J. Dejter, {\it Total coloring of regular graphs of girth = degree + 1}, Ars Combinatoria, {\bf 162} (2025), 159--176.

\bibitem{worst} I. J. Dejter, {\it Worst-case efficient dominating sets in digraphs}, Discrete Appl. Math, {\bf 161} (2003), 944-952.


\bibitem{PDS} I. J. Dejter, {\it Perfect domination in regular grid graphs}, Australasian J. of Combinatorics, {\\bf 42} (2008). 99--114.

\bibitem{Delgado} I. J. Dejter and A. Delgado, {\it Perfect domination in rectangular grid graphs}, Jour. Combin. Math. and Combin. Comput., {\bf 070} (2009), 177--196.

\bibitem{D73} I. J. Dejter and O. Serra, {\it Efficient dominating sets in Cayley graphs}, Discrete Appl. Math., {\bf 129} (2003), 319--328.

\bibitem{Tomai} I. J. Dejter and O. Tomaiconza, {\it Nonexistence of efficient dominating sets in the Cayley graphs generated by transposition trees of diameter 3}, Discrete Appl. Math, {\bf232} (2017), 116--124.

\bibitem{Deng} Y-P. Deng, Y.Q. Sun, Q. Liu and Y-C. Wang, {\it Efficient domination sets in circulant graphs}, Discrete Mathematics, {\bf 340} (2017), 1503--1507. 


\bibitem{Feng} Y. Feng, W. Lin, {\it A concise proof for total coloring subcubic graphs}, Inform. Process. Lett., {\bf 113} (2013), 664--665.

\bibitem{tc-as} J. Geetha, N. Narayanan and K. Somasundaram, {Total colorings-a survey}, AKCE Int. Jour. of Graphs and Combin., {\bf 20}, (2023), issue 3. 339--351. 


\bibitem{GrossT} J. L. Gross and T. W. Tucker, Topological Graph Theory, Wiley 1987.

\bibitem{Branko} B. Gr\"unbaum and G. C. Sheppard, Tilings and Patterns, W. H. Freeman, New York, 1987.

\bibitem{EDS} T. Haynes, S. T. Hedetniemi and M. A. Henning, {\it Efficient Domination in Graphs}, in Domination in Graphs: Core Concepts, Springer Monographs in Mathematics, Springer.

\bibitem{Knor} M. Knor and P. Poto\v{c}nik, {\it Efficient domination in vertex-transitive graphs}, Eur. Jour. Combin., {\bf 33} (2012), 1755--1764.


\bibitem{Rosen} M. Rosenfeld, {\it On the total chromatic number of a graph}, Israel J. Math., {\bf 9} (1971), 396--402.

\bibitem{Arroyo} A. S\'anchez-Arroyo, {\it Determining the total coloring number is NP-hard}, Discrete Math., {\bf 78} (1979), 315--319.


\bibitem{Vi} N. Vijayaditya, {\it On total chromatic number of a graph}, J. London Math. Soc., {\bf 2} (1971), 405--408.

\bibitem{V} V. G. Vizing, {\it On an estimate of the chromatic class of a $p$-graph}, Discret Analiz, {\bf 3} (1969), 25--30.


\end{thebibliography}
\end{document}